\documentclass{article}

\usepackage{arxiv}

\usepackage[english]{babel}
\usepackage{csquotes}
\usepackage{booktabs}
\usepackage{graphicx}
\usepackage{sidecap}
\usepackage[hypertexnames=false]{hyperref}
\usepackage{amssymb,amsmath,amsthm, mathrsfs}
\usepackage{mathtools}
\usepackage{accents}
\usepackage{dsfont}
\usepackage{diagbox,multirow}
\usepackage{hhline}
\usepackage[dvipsnames,x11names]{xcolor}
\usepackage{array}
\usepackage{colortbl}
\usepackage{subfig}
\usepackage{enumerate}
\usepackage[shortlabels]{enumitem}
\usepackage{makecell}
\usepackage{float}
\usepackage{framed}
\usepackage{varwidth}
\usepackage[export]{adjustbox}
\usepackage{xparse}
\usepackage {tikz}
\usetikzlibrary{cd}
\usetikzlibrary{arrows,decorations.pathmorphing,backgrounds,positioning,fit,matrix}
\usetikzlibrary{decorations.markings}
\usepackage{esint}
\usepackage{modernfrontmatter}
\usepackage{bm}

\newlength\mylen
\tikzset{
	bicolor/.style 2 args={
		dashed,dash pattern=on 20pt off 20pt,-,#1,
		postaction={draw,dashed,dash pattern=on 20pt off 20pt,-,#2,dash phase=20pt}
	},
}

\hypersetup{
	colorlinks,
	linkcolor={red!50!black},
	citecolor={blue!50!black},
	urlcolor={MJAccent}
}
\makeatletter
\newcommand{\conditem}[2]{%
  \item[#2]%
  \phantomsection
  \protected@edef\@currentlabel{#2}%
  \label{#1}%
}
\makeatother

\usepackage{etoolbox}
\apptocmd{\sloppy}{\hbadness 10000\relax}{}{}
\usepackage{amsbsy}

\newtheorem{lemma}{Lemma}[section]
\newtheorem{theorem}[lemma]{Theorem}
\newtheorem{proposition}[lemma]{Proposition}
\newtheorem{corollary}[lemma]{Corollary}

\theoremstyle{definition}
\newtheorem{definition}[lemma]{Definition}
\newtheorem*{example}{Example}

\theoremstyle{remark}
\newtheorem{remark}[lemma]{Remark}

\theoremstyle{plain}
\newtheorem{mainthm}{Theorem}

\numberwithin{equation}{section}

\newcommand{\R}{\mathbb{R}}
\newcommand{\Z}{\mathbb{Z}}

\newcommand{\UM}{\hyperref[UM]{\textup{\textbf{(UM)}}}}
\newcommand{\BD}{\hyperref[BD]{\textup{\textbf{(BD)}}}}
\newcommand{\C}{\hyperref[C]{\textup{\textbf{(C)}}}}

\newcommand{\SC}{\hyperref[SC]{\textup{\textbf{(SC)}}}}
\newcommand{\SCinf}{\hyperref[def:SC-infinity]{\textup{\textbf{(SC\textsubscript{$\infty$})}}}}
\newcommand{\FM}{\hyperref[FM]{\textup{\textbf{(FM)}}}}
\newcommand{\LG}{\hyperref[eq:LG]{\textup{\textbf{(LG)}}}}

\newcommand{\Q}{\mathcal{Q}}
\newcommand{\eps}{\varepsilon}

\newcommand{\ka}{\kappa}
\newcommand{\pt}{\partial_t}
\newcommand{\Om}{\Omega}

\newcommand{\GammaT}{\Gamma_T^\Omega}
\newcommand{\partialpGammaT}{\partial_p\GammaT}
\newcommand{\CtGammaT}{C_t^1(\GammaT)}
\newcommand{\CtGammaTClosure}{C_t^1(\overline{\GammaT})}
\newcommand{\ip}[2]{\left\langle #1,#2\right\rangle}
\newcommand{\dd}{\,d}
\newcommand{\ellp}[1]{\ell^{#1}(X,\mu)}
\newcommand{\norm}[2]{\left\|#1\right\|_{#2}}
\newcommand{\abs}[1]{\left|#1\right|}
\newcommand{\supp}{\operatorname{supp}}

\newcommand{\SP}[1][]{\hyperref[SP]{\textup{\textbf{(S\ifx&#1&\else\textsubscript{\ensuremath{#1}}\fi)}}}}

\makeatletter
\newcommand{\sbullet}{%
	\hbox{\fontfamily{lmr}\fontsize{.6\dimexpr(\f@size pt)}{0}\selectfont\textbullet}}

\makeatother

\DeclareMathSymbol{\shortminus}{\mathbin}{AMSa}{"39}

\newcommand{\dom}{\textnormal{dom}}

\newcommand{\Deg}{\operatorname{Deg}}

\newcommand{\Deltaphi}{\mathcal{L}}
\newcommand{\Deltaphip}{\mathcal{L}^{(p)}}

\NewDocumentCommand{\Deltadir}{o o}{%
	\Delta_{D\IfNoValueF{#1}{,#1}}%
	\IfNoValueF{#2}{^{#2}}%
}
\newcommand{\Deltadirn}{\Deltadir[n]}
\NewDocumentCommand{\Qdir}{o}{\Q\IfNoValueF{#1}{_{#1}}^{D}}

\newcommand{\id}{\operatorname{id}}
\newcommand{\ellone}{\ell^1(X,\mu)}
\newcommand{\embn}{\boldsymbol{\mathfrak{i}}_n}
\newcommand{\embom}{\boldsymbol{\mathfrak{i}}_\Omega}
\newcommand{\projn}{\boldsymbol{\pi}_n}
\newcommand{\projom}{\boldsymbol{\pi}_\Omega}

\usepackage[style=numeric,backend=bibtex,giveninits=true,maxbibnames=99,sortcites=true]{biblatex}
\allowdisplaybreaks
\title{The generalized porous medium equation on  graphs: well-posedness, extinction, and mass conservation}
\articlelabel{}
\paperstatus{Preprint}
\runningtitle{GPME on graphs: well-posedness, extinction, and mass conservation}
\runningauthors{Bianchi \textperiodcentered\ Hua \textperiodcentered\ Setti \textperiodcentered\ Wojciechowski}
\date{}

\paperauthor[1]{Davide Bianchi}
\paperauthor[2]{Bobo Hua}
\paperauthor[3]{Alberto G. Setti}
\paperauthor[4,5]{Rados{\l}aw K. Wojciechowski}

\paperaffiliation[1]{School of Mathematics (Zhuhai), Sun Yat-sen University, Zhuhai 518055, China}
\paperaffiliation[2]{School of Mathematical Sciences, LMNS, Fudan University, Shanghai 200433, China}
\paperaffiliation[3]{Dipartimento di Scienze e Alta Tecnologia, Universit\`a dell'Insubria, Como 22100, Italy}
\paperaffiliation[4]{Department of Mathematics and Computer Science, York College -- CUNY, Jamaica, USA}
\paperaffiliation[5]{Department of Mathematics, Graduate Center -- CUNY, New York, USA}

\authoremail[D.B.]{bianchid@mail.sysu.edu.cn}
\authoremail[B.H.]{bobohua@fudan.edu.cn}
\authoremail[A.G.S.]{alberto.setti@uninsubria.it}
\authoremail[R.K.W.]{rwojciechowski@gc.cuny.edu}

\keywords{generalized porous medium equation \and filtration equation \and graph Laplacians \and finite-time extinction \and conservation of mass}
\subjclass[2020]{35K55, 35R02, 47H06, 05C63}

\begin{document}
\maketitle
\begin{abstract}
We study the Cauchy problem for the generalized porous medium equation on infinite weighted graphs. For a general nonlinearity, we establish Dirichlet comparison
and weak maximum principles on finite subgraphs and, through an exhaustion
argument, construct minimal and maximal  pointwise solutions for
arbitrary $\ell^\infty$ initial data, controlled by explicit, possibly time-dependent,  barriers.

For the porous nonlinearity $\phi(s)=s|s|^{m-1}$, 
assuming a $\nu$-Sobolev inequality with
$\nu>2$, we derive quantitative energy estimates for  $\ell^1$-mild solutions. These yield finite-time
extinction in the fast diffusion range $0<m<2/\nu$ and $\ell^1$-$\ell^q$
smoothing in the range $m>2/\nu$. Interestingly, we recover the Euclidean critical exponent  for several model graphs. Finally, we prove
an exact generalized mass balance for nonnegative 
$\ell^1$-mild solutions on graphs that are stochastically complete at
infinity, allowing for an arbitrary killing term. The same balance is
established for suitable classical and bounded pointwise solutions. In the
absence of killing, these reduce to conservation of mass.
\end{abstract}

\section{Introduction}\label{sec:introduction}

The \emph{generalized porous medium equation} (GPME) (also known as the \emph{filtration equation}) has a long history. 
For a thorough treatment and classical results, we refer the reader to the lecture notes of Aronson~\cite{Aronson1986} and to the seminal book of V\'azquez~\cite{vazquez2007porous}. 
The (GPME) reads as 
\begin{equation}\tag{GPME}\label{eq:GPME}
	(\partial_t+\Delta\Phi)u(t,x)=f(t,x) 
	\qquad \text{for }(t,x)\in(0,T)\times X
\end{equation}
where $f$ is referred to as the \emph{forcing term}.
When $f=0$, we call the corresponding \eqref{eq:GPME} \emph{homogeneous}.
Given an initial datum $u_0$, 
the associated Cauchy problem is
\begin{equation}\tag{Cauchy-GPME}\label{eq:C-D}
	\begin{cases}
		(\partial_t +\Delta\Phi)u(t,x) = f(t,x)
		& \text{if } (t,x) \in (0,T)\times X\\
		u(0,x) = u_0(x)
		& \text{if } x\in X.
	\end{cases}
\end{equation}
Research on the~\eqref{eq:GPME} 
has remained highly active in recent years, both in Euclidean spaces~\cite{kienzler2016flat,cortazar2017near,cortazar2018nearfield,jin2022singular,bowen2023cauchy,jin2023optimal,akagi2023rates,choi2023asymptotics,jin2023bubbling,bonforte2024cauchy,jin2025regularity,jin2026extinction,grillo2026rough,bonforte2010veryfast} and in the Riemannian setting~\cite{shen2016gradient,grillo2016smoothing,grillo2017largetime,grillo2018measure,grillo2018large,cao2018aronson,grillo2019superquadratic,wang2021gradient,fujitani2024aronson,grillo2025green,bianchi2018laplacian}. Alongside these, there has been growing interest in the metric and discrete settings~\cite{ma2022porous,bianchi2022generalized,li2025gradient,kou2026porous,berchio2026semilinear,berchio2026fractional}. This work belongs to the latter category. 

A weighted graph is a quadruple $G=(X,w,\kappa,\mu)$ consisting of a countable set of vertices $X$, an edge-weight function
$w\colon X\times X\to [0,\infty)$, a killing term $\kappa\colon X\to [0,\infty)$
and a vertex measure $\mu\colon X\to (0,\infty)$, see Section~\ref{sec:notation}.
The Laplacian is defined as
\begin{equation*}
\Delta f(x)
=
\frac1{\mu(x)}
\sum_{y\in X}w(x,y)(f(x)-f(y))
+
\frac{\kappa(x)}{\mu(x)}f(x)
\end{equation*}
for $x\in X$ and functions for which the sum is well-defined. 
Note that $\kappa$ plays the role of the potential term in a Schrödinger operator. Two vertices $x,y$ are called neighbors if $w(x,y)>0.$
We do not assume that the graph is locally finite, i.e., 
each vertex may have infinitely many neighbors.
This non-locally finite setting is not only interesting in itself but
becomes essential for considering nonlocal operators, such as the fractional Laplacian \cite{JXWang2023,ZhangLinYang2025}, the Dirichlet-to-Neumann operator \cite{HuaHuangWang2022}, and others.

We let \(\phi\colon\R\to\R\) be continuous and monotone increasing, where
monotone increasing is always understood in the weak sense, that is,
\(\phi(s_1)\leq\phi(s_2)\) whenever \(s_1 < s_2\). We also assume that
\(\phi(0)=0\). We refer to \(\phi\) as the \emph{nonlinearity}, we denote by
\[
\Phi u:=\phi\circ u
\]
its canonical extension to functions on \(X\), and we call \(\Delta\Phi\) the
associated \emph{nonlinear operator}.
The choice of \(\phi\) determines the specific instance of \eqref{eq:GPME}.
A typical example is
\[
\phi_m(s)=s^m:=s|s|^{m-1}
\]
with $m>0$.
For $m=1$, we recover the classical heat equation, while $m>1$ and $0<m<1$
correspond to the porous medium and fast diffusion equations, respectively.
Our focus is on the Cauchy problem for the generalized porous medium equation
\eqref{eq:C-D} on weighted graphs.

In the discrete setting, the lack of chain rules means that several techniques from the continuous framework cannot be carried over directly. In~\cite{bianchi2022generalized}, building on the works of B\'enilan, Crandall, Liggett, and Pazy~\cite{benilan1988evolution,crandall1971generation}, an implicit Euler scheme was introduced to construct $\ell^1$-mild solutions of \eqref{eq:C-D} for the maximal restriction operator $\mathcal{L}\subseteq \Delta\Phi$ under very weak assumptions on the graph. 

Pointwise solutions of the formal porous operator $\Delta\Phi$ are nonetheless of independent interest. In this manuscript we address two complementary aspects: 
first, the existence of pointwise solutions to the \eqref{eq:C-D} for the formal porous operator under bounded initial data; and second, qualitative properties of both these pointwise solutions and the $\ell^1$-mild solutions associated with the maximal restriction operator. For the precise definitions of the operators, and the various notions of solution and their relationships, see Section~\ref{sec:notation}.

In particular, we first develop a comparison theory for the formal operator
\(\Delta\Phi\) on finite subgraphs with prescribed exterior data, 
see Theorem~\ref{thm:comparison}. These finite-domain estimates are then
used in an exhaustion argument to construct pointwise solutions on
the entire graph which are defined globally in time. More precisely, for every bounded $u_0$ and forcing term $f$, 
we construct lower (resp., upper) extremal pointwise solutions, 
which are minimal (resp., maximal)
among all pointwise solutions satisfying certain bounds. The existence of these extremal solutions requires no additional assumptions on the graph.
More precisely, we prove the following existence result.
\begin{mainthm}[{see Theorem~\ref{thm:extremal-bounded-pointwise-solutions}}]
Let \(G\) be a graph,
$u_0\in\ell^\infty(X)$
and $f\in C\bigl([0,\infty);\ell^\infty(X)\bigr).$ 
Then, for any
$A\le \min\{0,\inf_Xu_0\}
\ \text{and}\ 
B\ge \max\{0,\sup_Xu_0\},$ there
 exist global pointwise solutions
$
\underline u,\ \overline u
\colon[0,\infty)\times X\to\R
$
of the \eqref{eq:C-D} with initial datum $u_0$ and forcing term $f$ such that 
\[
 a\leq\underline u
\le
\overline u\leq b
\qquad
\textup{on }[0,\infty)\times X
\]
with \[
a(t):=A-\int_0^t\|f^-(s,\cdot)\|_\infty\dd s
\qquad \text{and} \qquad
b(t):=B+\int_0^t\|f^+(s,\cdot)\|_\infty\dd s
\] where \(f^\pm\) denote the positive and negative parts of \(f\). Furthermore, every pointwise solution \(v\) of the \eqref{eq:C-D} on \([0,T]\times X\) for $T>0$ with the same data \((u_0,f)\) satisfying \(a\le v\le b\) satisfies
\[
 \underline u
\le v\le
\overline u
\qquad
\textup{on }[0,T]\times X.
\]
\end{mainthm}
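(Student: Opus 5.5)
The plan is an exhaustion by finite subgraphs, using the Dirichlet comparison principle for $\Delta\Phi$ on finite subgraphs (Theorem~\ref{thm:comparison}) and the weak maximum principle as the basic tools. Fix an exhaustion $K_1\subseteq K_2\subseteq\cdots$ of $X$ by finite sets. On each $K_n$ I would solve the finite-dimensional Cauchy--Dirichlet problem
\[
\partial_t u_n+\Delta\Phi u_n=f\ \text{on }(0,\infty)\times K_n,\qquad u_n(0)=u_0 \text{ on } K_n,
\]
with exterior data on $X\setminus K_n$ prescribed in two ways. For the lower extremal solution I take $u_n=a(t)$ outside $K_n$, and for the upper one I take $u_n=b(t)$ outside $K_n$. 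Because $\phi$ is only continuous and monotone, the finite ODE system need not have unique solutions a priori. I would therefore obtain existence by Peano's theorem, or by approximating $\phi$ with Lipschitz nonlinearities, and recover uniqueness and ordering from the comparison theorem. That theorem should rely only on monotonicity, via an $\ell^1$-contraction and Kato-type argument with $\operatorname{sgn}^+(u-v)$, which matches $\operatorname{sgn}^+(\Phi u-\Phi v)$ on the relevant set.

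The barriers need to be checked first. Since $\phi$ is monotone and $a(t)\le 0$, the constant-in-space function $a(t)$ satisfies
\[
\partial_t a+\Delta\Phi a=-\|f^-(t)\|_\infty+\tfrac{\kappa}{\mu}\phi(a)\le -\|f^-(t)\|_\infty\le f .
\]
Here the graph part of $\Delta$ vanishes on constants, and $\kappa\phi(a)\le0$ because $\phi(0)=0$. So $a$ is a subsolution, and in the same way $b$ is a supersolution. Comparison on $K_n$ then gives $a\le u_n\le b$.

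Monotonicity in $n$ comes from comparing $u_n$ and $u_{n+1}$ on $K_n$. For the lower family, $u_{n+1}\ge a=u_n$ on $K_{n+1}\setminus K_n$ and outside, so $u_{n+1}\ge u_n$; hence the lower approximants increase. For the upper family the inequality reverses and the approximants decrease. Pointwise monotone limits $\underline u$ and $\overline u$ therefore exist and satisfy $a\le\underline u\le\overline u\le b$. The middle inequality follows from comparing, for each $n$, the lower and upper approximants on $K_n$.

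The main obstacle is passing to the limit in the equation when the graph is not locally finite. I would use the integral form
\[
u_n(t,x)=u_0(x)+\int_0^t\bigl(f-\Delta\Phi u_n\bigr)(s,x)\dd s .
\]
For fixed $x$, the term $\Delta\Phi u_n(s,x)$ is bounded uniformly by $\frac{1}{\mu(x)}\bigl(2\operatorname{Deg}(x)+\kappa(x)\bigr)\sup_{[a(t),b(t)]}|\phi|$, where $\operatorname{Deg}(x)=\sum_y w(x,y)<\infty$. This follows from the uniform bounds $a\le u_n\le b$ and the continuity of $\phi$ on compact intervals. Dominated convergence, applied both in the sum over $y$ and in the time integral, then gives $\Delta\Phi u_n\to\Delta\Phi\underline u$ pointwise and shows that $\underline u$ satisfies the integral equation. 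The same bound makes $u_n(\cdot,x)$ equi-Lipschitz, so the limit is continuous in $t$. Continuity of the integrand in $s$ then upgrades the limit to a $C^1$ pointwise solution.

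For the extremality claim, let $v$ be a pointwise solution on $[0,T]$ with $a\le v\le b$. On $K_n$, $v$ solves the same equation with exterior values $v\ge a=u_n$. The comparison theorem gives $u_n\le v$ on $[0,T]\times K_n$, provided it allows bounded exterior data that is merely continuous in time, which I would verify. Letting $n\to\infty$ gives $\underline u\le v$, and symmetrically $v\le\overline u$.
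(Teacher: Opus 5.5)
Your proposal is correct and follows the paper's proof. The common skeleton is the exhaustion with exterior data $a(t)$ and $b(t)$, the spatially constant barriers, and the finite comparison principle (Theorem~\ref{thm:comparison}), used for the bounds, for monotonicity in $n$, for the middle inequality and for extremality, followed by passage to the limit in the integral form of the equation. The only differences are minor. You get monotonicity by comparing on $K_n$ with $U_{n+1}|_{X\setminus K_n}\ge a$ as exterior datum, which is exactly the paper's minimality argument, whereas the paper compares on $\Omega_{n+1}$ after checking that $U_n$ is a subsolution at the new vertices. You also use dominated convergence in time instead of Dini's theorem.
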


We then identify conditions under which these pointwise solutions are genuine
Banach space classical solutions. Under the bounded degree assumption \BD, 
bounded pointwise solutions are \(\ell^\infty\)-classical and under the assumption that the entire
vertex set has finite measure \FM,
they are \(\ell^p\)-classical for every \(p\in[1,\infty]\), see 
Corollary~\ref{c:classical-solution}.
If, moreover, \(\phi\) is locally Lipschitz, the bounded degree assumption
gives global well-posedness in \(\ell^\infty(X)\), also in the presence of a forcing term, see Theorem~\ref{t:globalwp}.

For the rest of the introduction, we consider the homogeneous problem for the \eqref{eq:C-D}, i.e., when $f=0.$ The second part of the paper focuses on the porous nonlinearity $\phi=\phi_m$
and on  \(\ell^1\)-mild solutions. 
The Sobolev inequality is an important analytic tool on manifolds and graphs. We introduce a discrete analog \SP[\nu], $\nu>2$, of the Sobolev inequality
\[
\|u\|_{L^{\nu}}\leq C\|\nabla u\|_2.
\]
There are many graphs satisfying \SP[\nu] such as Cayley graphs of discrete groups of polynomial volume growth of order at least $N$ for $N > 2$, see Subsection~\ref{ssec:sobolev-examples}. Assuming a Sobolev inequality \SP[\nu]
with
\(\nu>2\), we prove quantitative energy estimates for the \(\ell^q\)-norms of
solutions, see Lemma~\ref{lem:energy-mild}.
These estimates imply finite-time extinction in the fast diffusion
range
$
0<m<\frac{2}{\nu}.
$
More specifically, we prove the following result.
\begin{mainthm}[{see Theorem~\ref{thm:flexible-extinction}}]
Assume that $G$ satisfies \SP[\nu] for some $\nu>2$ and let
    $0<m<\frac{2}{\nu}.$
For any $u_0\in\ellp{1}\cap\ellp{q}$ with $q\ge \frac{\nu(1-m)}{\nu-2},$ the mild solution $u$ 
of the \eqref{eq:C-D} with $f= 0$ and $\phi=\phi_m$
 has finite extinction time, i.e., there exists $T_1>0$ such that
\[
    u(t,\cdot)= 0
    \qquad\mbox{for every }\;
    t\ \ge\ T_1.
\]
\end{mainthm}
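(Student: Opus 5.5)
The plan is the standard energy/ODE-inequality argument for $\ell^q$-norms, run first on the implicit Euler scheme that defines the $\ell^1$-mild solution and then passed to the limit. This is the content of Lemma~\ref{lem:energy-mild}.

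\textbf{Step 1: the differential inequality.} Formally, multiply the equation by $u^{q-1}:=\phi_{q-1}(u)$ and sum against $\mu$. Green's formula, together with the killing term being nonnegative, gives
\[
\frac{1}{q}\frac{d}{dt}\|u\|_q^q \le -\frac12\sum_{x,y} w(x,y)\bigl(u^m(x)-u^m(y)\bigr)\bigl(u^{q-1}(x)-u^{q-1}(y)\bigr).
\]
A discrete Stroock--Varopoulos type inequality bounds the right-hand side by
\[
-c_{m,q}\,\mathcal{Q}\bigl(u^{(m+q-1)/2}\bigr), \qquad c_{m,q}=\frac{4m(q-1)}{(m+q-1)^2}.
\]
It follows from the elementary inequality
\[
(a^m-b^m)(a^{q-1}-b^{q-1})\ge c_{m,q}\bigl(a^{(m+q-1)/2}-b^{(m+q-1)/2}\bigr)^2 .
\]
Applying \SP[\nu] to $v=u^{(m+q-1)/2}$ then yields
\[
\frac{d}{dt}\|u\|_q^q \le -C\,\|u\|_{\frac{\nu(m+q-1)}{2}}^{m+q-1}.
\]

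\textbf{Step 2: closing the inequality.} The threshold $q\ge \nu(1-m)/(\nu-2)$ is exactly the condition $\nu(m+q-1)/2\ge q$. If equality holds, the right-hand side is $-C\|u\|_q^{m+q-1}$. Setting $y=\|u\|_q^q$ this gives
\[
y'\le -C\,y^{(m+q-1)/q}, \qquad \frac{m+q-1}{q}<1 \iff m<1,
\]
and this sublinear ODE forces $y$ to vanish by the finite time
\[
T_1\le \frac{q\,\|u_0\|_q^{1-m}}{C(1-m)} .
\]

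For $q$ strictly larger than the threshold I interpolate. Set $r=\nu(m+q-1)/2$ and control $\|u\|_q$ by $\|u\|_1$ and $\|u\|_r$ via Hölder. The $\ell^1$-norm is nonincreasing by the $\ell^1$-contraction of mild solutions. The $\ell^q$-norm is also nonincreasing by Step 1, so we can bound by the initial norms; this is where $u_0\in\ell^1\cap\ell^q$ enters. Substituting gives
\[
y'\le -C\,\|u_0\|_1^{-\alpha}\,y^{\beta} .
\]
One checks that $\beta<1$ exactly when $m<2/\nu$; this is where the fast-diffusion restriction is used. Again $y$ vanishes in finite time, and once $y=0$ the solution stays zero, because mild solutions are unique and $0$ is a solution.

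\textbf{Main obstacle: rigor for mild solutions.} The formal computation must be made rigorous at the level of the Crandall--Liggett scheme $u_k+\tau\mathcal{L}u_k=u_{k-1}$. The plan is:
\begin{itemize}
\item test the resolvent equation with $u_k^{q-1}$;
\item use convexity, $\|u_k\|_q^q-\|u_{k-1}\|_q^q\le q\langle u_k^{q-1},u_k-u_{k-1}\rangle$;
\item justify Green's formula for $u_k$ in the domain of the maximal restriction operator, via an exhaustion by finite sets and Fatou's lemma, taking care with the sign of the boundary terms;
\item obtain a discrete version of the ODE inequality, $y_k-y_{k-1}\le -\tau C y_k^{\beta}$, which is a discrete comparison;
\item let $\tau\to0$ using convergence of the scheme in $\ell^1$ and lower semicontinuity of the $\ell^q$-norms.
\end{itemize}
I expect the justification of the summation by parts for non-locally-finite graphs with $\ell^1$ data to be the most delicate point. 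I would handle it by proving it first for finitely supported truncations and then using monotone convergence.
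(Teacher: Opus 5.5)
Your skeleton matches the paper's: convexity of $|\cdot|^q$ at each implicit Euler step, the Stroock--Varopoulos inequality of Lemma~\ref{lem:signed-power-ineq} with $c_{m,q}$, \SP[\nu] applied to $u^{r_q}$, interpolation between $\ell^1$ and $\ell^{s_q}$ using only $\norm{u}{1}\le\norm{u_0}{1}$, and a Bihari-type comparison. Your critical case $q=\alpha$ with exponent $2/\nu$ is Remark~\ref{rem:critical-choice}.

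\textbf{The gap.} You plan to make this rigorous for the scheme $u_k+\tau\Deltaphi u_k=u_{k-1}$ with the \emph{maximal} operator, justifying Green's formula and the Sobolev step for $u_k\in\dom(\Deltaphi)$ via finitely supported truncations. This cannot work, because the energy inequality is false for general solutions of that scheme. Take $X=\mathbb N_0$, $\mu(n)=2^{-n}$, $w(n,n+1)=2^n$, $\kappa=0$.
\begin{itemize}
\item For $\psi\in C_c(X)$, Cauchy--Schwarz gives $|\psi(n)|^2\le\bigl(\sum_{k\ge n}2^{-k}\bigr)\Q(\psi)=2^{1-n}\Q(\psi)$. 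Hence \SP[\nu] holds for every $\nu>2$, and $\mu(X)=2$.
\item The constant $\mathbf 1$ lies in $\ellone$ and satisfies $\Delta\Phi\mathbf 1=0$. So $u_k\equiv\mathbf 1$ solves your scheme with $u_0=\mathbf 1\in\ellone\cap\ell^q(X,\mu)$, and $\norm{u_k}{q}$ never decreases.
\item Your truncations break exactly where you feared: $\Q(\mathbf 1_{\{0,\dots,n\}})=2^n\to\infty$, so the boundary terms do not disappear.
\item \SP[\nu], stated only on $C_c(X)$, does not extend to the finite-energy function $\mathbf 1$, since $\Q(\mathbf 1)=0<\norm{\mathbf 1}{\nu}^2$.
\end{itemize}
So the statement must be read for the mild solution generated by the exhaustion-selected m-accretive restriction $\mathcal A\subseteq\Deltaphi$ of Theorem~\ref{thm:existence-mild}. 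The estimate has to be proved for its resolvent $J_\lambda$, not for arbitrary elements of the maximal domain.

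\textbf{The missing idea} is to do the computation on the finite zero-Dirichlet problems $u_\Omega+\lambda\Deltadir[\Omega]u_\Omega^m=\projom g$ that define $J_\lambda$ (Lemma~\ref{lem:finite-dirichlet-resolvent}).
\begin{itemize}
\item There Green's formula is exact.
\item The edges leaving $\Omega$ and the killing term contribute $w(x,y)|u_\Omega(x)|^{m+q-1}$ and $\kappa(x)|u_\Omega(x)|^{m+q-1}$. These equal the corresponding terms of $\Q(\embom u_\Omega^{r_q})$, hence dominate $c_{m,q}$ times them, since $c_{m,q}\le1$.
\item \SP[\nu] applies legitimately, since $\embom u_\Omega^{r_q}\in C_c(X)$.
\end{itemize}
Together with $\norm{u_\Omega}{1}\le\norm{g}{1}$, this gives the resolvent inequality on $\Omega$. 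It passes to $J_\lambda g$ by $\ell^1$-convergence along the exhaustion, Fatou's lemma, and monotonicity of $z\mapsto z+\lambda Cz^{\delta_q}$.

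\textbf{Two smaller points.}
\begin{itemize}
\item This also repairs a slip in your formal Step~1. You discard the killing term, but \SP[\nu] is stated for the full form $\Q$ including $\kappa$. So the bound by $-c_{m,q}\Q(u^{r_q})$ does not follow from the killing-free expression; keep the killing term.
\item In the limit $\tau\to0$, Fatou's lemma only controls the left-hand side, so summing from $t=0$ yields the integral inequality only for $s=0$. That alone does not give extinction. The paper obtains it for all $0\le s\le t$, as required by Lemma~\ref{lem:ode-comparison} (which needs no differentiability of $Y$). It does so by restarting at $u(s)$ via Lemma~\ref{lem:translation}, using $\norm{u(s)}{1}\le M$ and $\gamma_q\ge0$. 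Alternatively, you would first have to extract an explicit extinction bound at the discrete level.
\end{itemize}
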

Note that, for 
Cayley graphs of polynomial volume growth \(N>2\), where the Sobolev exponent is
\(\nu=2N/(N-2)\), this gives the critical fast diffusion threshold of finite-time extinction
\[
m_c=\frac{N-2}{N}
\]
in agreement with the Euclidean exponent. In the complementary range
$
m>\frac{2}{\nu},
$
we establish \(\ell^1\)-\(\ell^q\) smoothing estimates for every \(q>1\),
see Theorem~\ref{thm:smoothing}. As a side note (Corollary~\ref{cor:minimal-pointwise-extinction}), we extend the extinction estimates to minimal positive pointwise solutions, even for initial data in $\ellp{\alpha}\cap\ell^\infty(X)$ that need not belong to $\ellone$.

Finally, we study the global mass balance for the homogeneous problem,
allowing an arbitrary killing term and distinguishing the mass
removed by killing from any additional loss at infinity. 
For the \eqref{eq:GPME} posed on the entire Euclidean space, conservation of mass is a classical feature of the Cauchy problem, in contrast with Dirichlet problems on proper subdomains, where part of the mass flows out through the boundary, see \cite[Proposition~9.15 and Subsection~3.3.3]{vazquez2007porous}. 
A similar property holds for finite subgraphs with Dirichlet boundary conditions.
On an infinite graph or an open manifold, the role of the boundary is played by infinity, and the corresponding dichotomy is governed by stochastic completeness.
Stochastic completeness at infinity becomes relevant in the presence of a killing term which instantly
removes heat at any vertex where it takes a positive value, thus making the
graph stochastically incomplete.
In this case, 
the notion of stochastic completeness at infinity ensures that the heat removed by the killing term
is recaptured and factored back into the total mass. Therefore, a killing term actually
helps stochastic completeness at infinity by removing the heat and capturing it
before it can reach the boundary at infinity and be killed, see \cite{KL12}
or \cite[Chapter~7]{KLW21} as well as \cite{MS20} for
the case of manifolds.

Thus, when considering cases with the killing term, 
we use the concept of stochastic completeness at infinity \SCinf{} 
and note that, for the case $\kappa= 0,$  
\SCinf{} reduces to standard stochastic completeness \SC. 
Under a growth condition \LG{} of \(\phi\) at the origin, which holds for
\(\phi_m\) with \(m\geq1\), we prove the generalized mass balance for nonnegative mild
solutions, that is, all mass loss is accounted for exactly by the killing term, without
any summability assumption on \(\kappa\).
More specifically, we prove the following result.

\begin{mainthm}[{see Theorem~\ref{thm:mild-generalized-mass}}]
Assume that \(G\) satisfies \SCinf{} and that
\begin{equation*}
    \mu(X)<\infty
    \qquad\text{or}\qquad
    \phi\text{ satisfies }\LG{}.
\end{equation*}
Then, for every
\(0\leq u_0\in\ell^{1}(X,\mu)\), the mild solution $u$ of the \eqref{eq:C-D} with $f= 0$ 
satisfies
\begin{equation*}
    \norm{u(t)}{1}
    +
    \int_0^t\sum_{x\in X}
        \kappa(x)\phi(u(s,x))\dd s
    =
    \norm{u_0}{1}\qquad  \text{for }
    t\ge0.
\end{equation*}
\end{mainthm}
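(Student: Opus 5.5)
The plan is to prove the mass balance first for the implicit Euler approximations that define the $\ell^1$-mild solution, and then pass to the limit in the step size. For $0\le v\in\ell^1$, a resolvent step $u-h\,\Deltaphi u = v$ (more precisely $u+h\Deltaphi u=v$) requires the identity
\[
\sum_{x}\mu(x)\,\Delta\Phi u(x)=\sum_x\kappa(x)\phi(u(x)),
\]
that is, the ``zero-mass'' property of the Laplacian part applied to $\Phi u$. Iterating over the steps and letting $h\to0$ (Crandall--Liggett convergence in $\ell^1$, with Fatou's lemma or monotone convergence for the $\kappa$-term, using $u\ge0$) then gives the claimed equality. One could instead work with time-differentiability of $\norm{u(t)}{1}$, but mild solutions need not be differentiable, so the discrete route is safer.

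The core is the resolvent-level identity. Set $g=\phi(u)\ge0$, which satisfies $g+h\Delta g\cdot(\text{stuff})$, i.e.\ $\Delta g=(v-u)/h\in\ell^1(X,\mu)$. I would characterize \SCinf{} through its equivalent formulation from \cite{KL12,KLW21}: every bounded nonnegative $w$ with $(\Delta+\alpha)w\le 0$ vanishes, or equivalently the Green-type identity $\sum\mu\Delta g=\sum\kappa g$ holds for suitable $g$ with $\Delta g\in\ell^1$. The cleanest route is an exhaustion argument. Take finite sets $K_n\uparrow X$ and test against $1_{K_n}$. Summation by parts gives
\[
\sum_{K_n}\mu\Delta g=\sum_{K_n}\kappa g+\sum_{x\in K_n,\,y\notin K_n}w(x,y)\bigl(g(x)-g(y)\bigr).
\]
One must show that the boundary flux term has a limit inferior equal to $0$. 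Instead, I would replace $1_{K_n}$ by the functions $1-e_n$, where $e_n$ are the $\alpha$-harmonic-type functions on $X\setminus K_n$ whose limit is the characterizing function of \SCinf{} (zero exactly under \SCinf{}). Testing against these and using that $\Delta g\in\ell^1$ together with dominated convergence reduces the problem to a bound on the pairing $\sum\mu\, g\,(\Delta+\alpha)(\cdot)$. For this step I need $g\in\ell^1$ or at least $g$ bounded with appropriate integrability.

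This is exactly where the alternative hypothesis enters, and I expect it to be the main obstacle. If $\mu(X)<\infty$, then $u$ is bounded (the resolvent is an $\ell^\infty$ contraction for bounded data; I would first approximate by bounded $v$ and use $\ell^1$ continuity of the resolvent), so $g=\phi(u)\in\ell^\infty\subseteq\ell^1$. Under \LG{}, I assume the condition is of the type $\phi(s)\le Cs$ near $0$. Then $u\in\ell^1\cap\ell^\infty$ gives $g\in\ell^1(X,\mu)$, since $u$ is small off a finite set and $\phi$ is bounded on compacts. Once $g\in\ell^1$ and $\Delta g\in\ell^1$, Fubini is justified in the duality pairing against bounded functions, and \SCinf{} yields the identity with no summability assumption on $\kappa$, because the $\kappa g$ term is nonnegative and is handled by monotone convergence. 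Finally, summing the discrete identities gives $\norm{u_h(t)}{1}+\int_0^t\sum\kappa\phi(u_h)=\norm{u_0}{1}$ exactly. Passing $h\to0$ gives ``$\le$'' via Fatou. For equality, I would use that $\sum\kappa\phi(u_h)$ equals $(\norm{u_0}{1}-\norm{u_h(t)}{1})$ in integrated form, so convergence of the left-hand norms forces convergence of the integrated killing terms, provided the pointwise convergence $\phi(u_h)\to\phi(u)$ and Fatou are applied in both directions through the equality.
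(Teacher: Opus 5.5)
\textbf{There is a genuine gap: your limit $\eps\to0$ only yields the trivial inequality.}

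Your discrete step is fine. For bounded nonnegative data the Euler iterates satisfy $0\le u_{\eps,k}\le R$. Both $g=\Phi u_{\eps,k}$ and $\Delta g=(u_{\eps,k-1}-u_{\eps,k})/\lambda_k$ lie in $\ell^1(X,\mu)\cap\ell^\infty(X)$, using \FM{} or, under \LG{}, the bound $\phi(r)\le C_R r$. The no-flux identity of Lemma~\ref{lem:no-flux-killing}, which is what your $1-e_n$ sketch is aiming at, then gives the exact discrete balance
\[
\norm{u_{\eps,N}}{1}+\sum_{k=1}^N\lambda_k\sum_{x\in X}\kappa(x)\phi(u_{\eps,k}(x))=\norm{u_0}{1}.
\]
The problem is the passage $\eps\to0$. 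Monotone convergence is not available in $\eps$. Fatou's lemma only gives
\[
\norm{u(t)}{1}+\int_0^t\sum_x\kappa(x)\phi(u(s,x))\dd s\le\norm{u_0}{1},
\]
which is the direction that holds anyway, since mass can only be lost. The whole content of the theorem is the reverse inequality: no killing mass escapes.

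You do know that the discrete killing integrals converge to $\norm{u_0}{1}-\norm{u(t)}{1}$. But identifying this number with $\int_0^t\sum_x\kappa\phi(u)\dd s$ requires uniform integrability of $\kappa\phi(u_\eps)$ with respect to the possibly infinite measure $\kappa(x)\,ds$ on $[0,t]\times X$. Nothing in your argument supplies this. The killing term is arbitrary and $\kappa/\mu$ may be unbounded, so $\ell^1(X,\mu)$-convergence of $u_\eps$ gives no control of $\sum_x\kappa(x)|\phi(u_\eps(s,x))-\phi(u(s,x))|$. Applying Fatou ``in both directions'' does not help, since Fatou on the norm term returns the same inequality. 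The same defect affects your idea of treating unbounded data by $\ell^1$-continuity of the resolvent, unless the approximations are monotone.

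\textbf{The missing idea: apply the no-flux identity after the limit, to a time-integrated potential.} This way the killing term is never passed to the limit. The paper proceeds as follows.
\begin{enumerate}
\item Sum the Euler equations first. The function $H_\eps:=\sum_k\lambda_k\Phi u_{\eps,k}=\int_0^t\Phi u_\eps(s)\dd s$ satisfies $\Delta H_\eps=u_0-u_{\eps,N}$ pointwise and $0\le H_\eps\le t\phi(R)$.
\item Bound $\norm{H_\eps}{1}$ uniformly in $\eps$: by $t\phi(R)\mu(X)$ under \FM{}, and by $C_Rt\norm{u_0}{1}$ under \LG{}.
\item For each $x$, $u_\eps(\cdot,x)\to u(\cdot,x)$ uniformly on $[0,t]$. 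Hence $H_\eps\to F_t:=\int_0^t\Phi u(s)\dd s$ pointwise.
\item Dominated convergence, using local summability of $w$ and the bound $t\phi(R)$, gives $\Delta F_t=u_0-u(t)$. Fatou gives $F_t\in\ell^1(X,\mu)\cap\ell^\infty(X)$.
\item Apply Lemma~\ref{lem:no-flux-killing} to $F_t$ and then Tonelli:
\[
\norm{u_0}{1}-\norm{u(t)}{1}=\sum_x\Delta F_t(x)\mu(x)=\sum_x\kappa(x)F_t(x)=\int_0^t\sum_x\kappa(x)\phi(u(s,x))\dd s .
\]
This is the exact balance.
\item Handle unbounded $u_0$ afterwards by the truncations $u_0\wedge n$. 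Order preservation of $S(t)$ makes the approximations increase, so monotone convergence is legitimate in all three terms.
\end{enumerate}
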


We then prove the corresponding balance laws for classical and bounded
pointwise solutions, still allowing an arbitrary killing term. For
nonnegative solutions, the appropriate formulation is the generalized mass
balance displayed above. For possibly signed solutions, the corresponding
identity is the signed balance
\[
\sum_{x\in X}u(t,x)\mu(x)
+
\int_0^t\sum_{x\in X}\kappa(x)\phi(u(s,x))\dd s
=
\sum_{x\in X}u_0(x)\mu(x),
\]
whenever the killing contribution is absolutely integrable in time. If the
killing term vanishes, this reduces, without a sign restriction, to
conservation of signed mass:
\[
\sum_{x\in X}u(t,x)\mu(x)
=
\sum_{x\in X}u_0(x)\mu(x)
\qquad
\text{for } t \geq 0.
\]
For nonnegative solutions, conservation of signed mass is equivalently
\(\norm{u(t)}{1}=\norm{u_0}{1}\).
Our main result in this direction, Theorem~\ref{t:conservation},
establishes the appropriate mass-balance laws with an arbitrary killing
term in two settings. Under \SCinf, \UM{} and \C, every nonnegative
\(\ell^1\)-classical solution satisfies the generalized mass balance;
possibly signed \(\ell^1\)-classical solutions satisfy the differential
signed-mass identity and, whenever the killing contribution is absolutely
integrable in time, the integrated signed balance. Under \BD{} and \FM,
every bounded pointwise solution, possibly signed, satisfies the integrated
signed balance. Both statements follow from an abstract criterion,
Proposition~\ref{theorem:cons_mass}, which derives the balance from a
no-flux identity at infinity for \(\Phi u\). Consequently, when
\(\kappa=0\), every solution covered by the theorem conserves its signed
mass.

We emphasize that none of our main results require the underlying graphs to be locally finite. Local finiteness is typically imposed in several standard proof techniques to circumvent technical obstructions; our approach bypasses this requirement entirely by adding appropriate
conditions on the boundary data as needed.
Additionally, our framework naturally allows for killing terms. Consequently, our results apply to nonlocal operators and also Schrödinger operators with nonnegative potentials.

The manuscript is organized as follows. Section~\ref{sec:notation} contains the notation and
preliminaries used throughout the paper. We recall the weighted graph setting, the porous medium-type operators considered here, and the notions of pointwise, classical and mild solutions.
Section~\ref{sec:bounded-pointwise} is devoted to comparison principles and bounded pointwise solutions.
We prove the finite-domain exterior-Dirichlet comparison principle and the
weak maximum principle and then use monotone exhaustion to construct the
lower and upper extremal bounded pointwise solutions on infinite graphs.
We also give criteria ensuring that these pointwise solutions are classical
solutions and prove global \(\ell^\infty\)-well-posedness under bounded degree
and locally Lipschitz nonlinearities.

In Section~\ref{sec:extinction} we study the porous case \(\phi_m(s)=s^m\) for the
\(\ell^1\)-mild solutions associated with the maximal accretive realization.
After recalling the existence and uniqueness theory for such mild solutions,
we derive the basic energy inequality under the Sobolev assumption \SP[\nu].
This leads to the finite-time extinction theorem for \(0<m<2/\nu\) and to the
\(\ell^1\)-\(\ell^q\) smoothing theorem for \(m>2/\nu\).
Section~\ref{sec:conservation_of_mass} proves the generalized mass
balance on graphs that are stochastically complete at infinity, with
arbitrary killing: first for nonnegative  \(\ell^1\)-mild
solutions and then for suitable classical and bounded pointwise solutions,
recovering conservation of mass in the absence of killing.
Some auxiliary results used in Section~\ref{sec:extinction} are collected
in Appendix~\ref{sec:appendix}.

\section{Preliminaries}\label{sec:notation}
In this section, we lay out the  notation and key concepts. We begin by discussing graphs, alongside Laplacian and porous medium-type operators. We also discuss a specialization  of the Green's formula as well as different types of solutions that we will consider and introduce some additional assumptions on graphs that will be needed later.

\subsection{Function spaces and graphs}\label{ssec:function_spaces&graphs}

We start with notations for the identity operator and the Nemytskii operator arising from
composition of functions.
Given a countable set~$X$, we let $C(X)=\{ f\colon X \to \R\}$ denote the set
of all functions on $X$.
If $E\subseteq C(X)$,
we denote by $\id\colon E \to E$ the identity operator.
If $\phi \colon \dom\left(\phi\right)\subseteq \R \to \R$ is a function, then we denote by the capital letter $\Phi$ the canonical extension of $\phi$ to $E$,
i.e., the operator $\Phi \colon \dom\left(\Phi\right)\subseteq E \to C(X)$
given by
\begin{align*}
	\dom\left(\Phi\right)&= \left\{ u \in E \mid u(x) \in \dom\left(\phi\right) \textup{ for all } x \in X  \right\}\\
	\Phi u(x)&= \phi(u(x)).
\end{align*}
This is referred to as the Nemytskii operator associated to $\phi$.

Unless otherwise stated, we fix a continuous, monotone increasing function
\[
\phi \colon \R\to\R
\qquad \text{such that} \qquad
\phi(0)=0
\]
and use $\Phi u=\phi\circ u$ for the associated Nemytskii operator.
We do not assume that \(\phi\) is injective or surjective.

Given $u, v \in C(X)$, we write $u\geq v$ if $u(x)\geq v(x)$ for every $x \in X$. All other ordering symbols are defined accordingly. We say that a function $u$ is \emph{positive} if $u \geq 0$
and \emph{strictly positive} if $u>0$ and \emph{negative/strictly negative} when the
opposite inequalities hold.

We now introduce graphs and some associated function spaces.
For a detailed introduction to the graph setting as presented here, see \cite{KLW21}.

\begin{definition}\label{def:graph}
	A \textit{graph} is a quadruple $G=(X,w,\kappa,\mu)$ given by
	\begin{itemize}
		\item a countable set of \emph{vertices} $X$,
		\item a positive symmetric \emph{edge-weight} function $w\colon X\times X \to [0,\infty)$,
		\item a positive \emph{killing term} $\kappa \colon X \to [0,\infty)$,
		\item a strictly positive \emph{vertex measure} $\mu \colon X \to (0,\infty)$,
	\end{itemize}
	where the edge-weight function $w$ satisfies:
	\begin{enumerate}
		\item[(i)]\label{assumption:symmetry} Symmetry: $w(x,y)=w(y,x)$ for every $x,y \in X$.
		\item[(ii)]\label{assumption:loops} No loops: $w(x,x)=0$ for every $x \in X$.
		\item[(iii)]\label{assumption:degree} Local summability: $\sum_{y\in X} w(x,y) < \infty$ for every $x \in X$.
	\end{enumerate}
\end{definition}
These are the basic assumptions on every graph we consider here. Whenever the vertex set $X$ is finite, we call $G$ a \emph{finite graph}.
We call a sequence of finite subsets $(\Om_n)$
such that $\Om_n \subseteq \Om_{n+1}$ and $\bigcup_n \Om_n = X$
an \emph{exhaustion} of the graph.
We think of $x, y \in X$ with $w(x,y)>0$ as being \emph{connected}
by an edge with weight $w(x,y)$, write $x \sim y$, and call $x$ and $y$
\emph{neighbors} in this case.
We say that a graph is \emph{locally finite} if every vertex
has finitely many neighbors, i.e.,
$| \{ y \mid y \sim x \}| < \infty$
for every $x \in X$. We note that we never 
impose the assumption of local finiteness on the graph in this paper.

For $x \in X$,
we define the degree by
\[
\Deg(x):=\frac{1}{\mu(x)}
\left(\sum_{y \in X}w(x,y)+\kappa(x)\right).
\]
The local summability assumption implies that $\Deg(x)$ is finite at every
vertex.

A \emph{finite path} is a finite sequence $(x_0,\ldots,x_N)$ such that
$x_j\sim x_{j+1}$ for every $j=0,\ldots,N-1$.
We say that a graph is \emph{connected}
if for any two distinct vertices $x, y \in X$, there exists a finite path
that starts at $x$ and ends at $y$.
The \emph{length} of a finite path is the number of its edges.
The \emph{combinatorial graph distance} is the length of the
shortest path between two vertices $x, y \in X$. This is denoted by $d(x,y)$
where we note that $d(x,x)=0$.

Unless explicitly stated otherwise, all graphs considered below are assumed to
be connected.

If $\Omega\subseteq X$, we let
\[
\partial_e \Omega=\{x\not \in \Omega \mid \exists y \in \Omega \, \text{ such that } y \sim x \}
\]
and call it the \emph{exterior boundary}.
We write
\[
\overline\Omega:=\Omega\cup\partial_e\Omega.
\]

We have the following elementary exhaustion fact which is non-trivial as we do not assume local finiteness, see~\cite[Lemma A.4]{bianchi2022generalized} for an explicit construction.
\begin{lemma}\label{lem:exhaustion}
Every connected  graph admits an increasing exhaustion by finite connected subsets.
\end{lemma}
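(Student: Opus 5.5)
Since $X$ is countable, we enumerate it as $X=\{x_1,x_2,\dots\}$. The plan is to build inductively finite connected sets $\Om_1\subseteq\Om_2\subseteq\cdots$ with $x_1,\dots,x_n\in\Om_n$. Then $\bigcup_n\Om_n=X$ automatically, so $(\Om_n)$ is an exhaustion by finite connected subsets. The only subtlety is that, without local finiteness, we may not take combinatorial balls $\{y\mid d(x_1,y)\le n\}$, since these can be infinite. Instead we add only finitely many vertices along chosen finite paths.

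\textbf{Base step.} Set $\Om_1:=\{x_1\}$. A singleton is trivially connected, since the definition requires paths only between distinct vertices.

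\textbf{Inductive step.} Suppose $\Om_n$ is finite, connected, and contains $x_1,\dots,x_n$. By connectedness of $G$ there is a finite path $(y_0,\dots,y_N)$ with $y_0=x_1$ and $y_N=x_{n+1}$ (if $x_{n+1}=x_1$, take $N=0$). Define
\[
\Om_{n+1}:=\Om_n\cup\{y_0,\dots,y_N\}.
\]
This set is finite as a union of two finite sets, contains $x_1,\dots,x_{n+1}$, and contains $\Om_n$.

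To see that $\Om_{n+1}$ is connected in the induced sense, note that every vertex of $\Om_{n+1}$ can be joined to $x_1$ by a path lying entirely in $\Om_{n+1}$. For vertices of $\Om_n$ this holds by the inductive hypothesis. For a vertex $y_j$ we use the initial segment $(y_0,\dots,y_j)$, all of whose vertices lie in $\Om_{n+1}$ by construction. Concatenating a path from $a$ to $x_1$ with the reversed path from $x_1$ to $b$, which is allowed by symmetry of $w$, joins any two points $a,b$. The resulting walk may repeat vertices, but the definition of a finite path allows this, or one may shorten it to a simple path.

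\textbf{Main obstacle.} The one point requiring care is the notion of connectedness of a subset. I interpret it as connectedness of the induced subgraph, meaning paths must stay inside the set. The construction guarantees this precisely because we add entire connecting paths rather than just their endpoints. No local finiteness is used anywhere, since each step adds only the finitely many vertices of a single path.
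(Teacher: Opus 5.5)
Your proof is correct. You enumerate $X$ and at each step adjoin the finitely many vertices of one path from a fixed base vertex to the next enumerated vertex, so every set stays finite and connected within itself without using local finiteness. The paper gives no proof of its own and defers to Lemma A.4 of its earlier cited work; your argument is the standard enumerate-and-grow construction of that kind.
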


Let $C_c(X)$ denote the set of finitely
supported functions in $C(X)$. For $\Omega\subset X$, set 
\[
C_c(\Omega)
:=
\{u\in C_c(X) \mid \supp u\subseteq\Omega\}.
\]

For $\Omega \subseteq X$, we introduce the usual $\ell^p$ spaces by
\begin{align*}
\ell^p(\Omega,\mu)&= \{ f \in C(\Om) \mid \sum_{x \in \Omega} |f(x)|^p \mu(x)< \infty \}	&\mbox{for } p \in [1,\infty)\\
\ell^\infty(\Omega) &= \{ f\in C(\Omega) \mid \sup_{x \in \Omega}
|f(x)| < \infty\} &\mbox{for } p=\infty
\end{align*}
	with  norms
\[
\|f\|_p= \begin{cases}
	\left(\sum_{x\in \Omega} |f(x)|^p\mu(x)\right)^{{1}/{p}} & \mbox{for } p \in [1,\infty)\\
	\sup_{x\in \Omega}|f(x)| & \mbox{for } p =\infty.
\end{cases}
\]
Moreover, we use the standard notation for  the bilinear form
\[
\langle f, g \rangle_\Omega = \sum_{x\in \Omega} f(x)g(x)\mu(x),
\]
defined whenever the sum converges absolutely. This form
induces the Hilbert space structure on $\ell^2(\Omega, \mu)$.

\subsection{The formal Laplacian, energy form and Green's formula}
We now introduce the family of operators that we will consider. The full formal
domain for the Laplacian is
\[
\mathcal F
:=
\left\{
f\in C(X) \mid 
\sum_{y\in X}w(x,y)|f(y)|<\infty
\text{ for every }x\in X
\right\}.
\]
For $f\in\mathcal F$, the nonnegative formal (graph) Laplacian is defined as
 \[
 \Delta f(x)
 =
 \frac1{\mu(x)}
 \sum_{y\in X}w(x,y)(f(x)-f(y))
 +
 \frac{\kappa(x)}{\mu(x)}f(x)
 \]
 for $x \in X$.

The associated energy form is
\begin{equation}\label{eq:Q-def}
\Q(f,g)
=
\frac12\sum_{x,y\in X} w(x,y)\bigl(f(x)-f(y)\bigr)\bigl(g(x)-g(y)\bigr)
+
\sum_{x\in X}\kappa(x)f(x)g(x)
\end{equation}
defined whenever the sums converge absolutely. We write $\Q(f):=\Q(f,f)$.

For $v\in C(X)$, the \emph{restriction} of $v$ to $\Omega$ is
\[
\projom v := v_{|_\Omega}\in C(\Omega).
\]
For $u\in C(\Omega)$, $\embom u\in C(X)$, the \emph{zero extension} of $u$ is
\[
\embom u(x):=
\begin{cases}
u(x) & x\in\Omega\\
0 & x\in X\setminus\Omega.
\end{cases}
\]
Both maps are linear and satisfy $\projom\embom=\operatorname{id}_{C(\Omega)}$.
When $\Omega=X_n$, we write $\projn$ and $\embn$ for the corresponding
restriction and zero extension maps.

Given an exterior datum 
$\eta \colon X\setminus\Omega\to\R$, the \emph{boundary extension}
$E_\Omega^\eta \colon C(\Omega)\to C(X)$ is defined as
\[
E_\Omega^\eta u(x):=
\begin{cases}
u(x) & x\in\Omega\\
\eta(x) & x\in X\setminus\Omega.
\end{cases}
\]
In particular, $E_\Omega^{0}=\embom$ and $E_\Omega^\eta u$
is affine in $u$. Constant exterior data are identified with the corresponding
constant functions on $X\setminus\Omega$. If $\eta$ is prescribed only on
$\partial_e\Omega$, we set $\eta:=0$ on $X\setminus\overline{\Omega}$.

For a map $\theta\colon\R\to\R$ and an exterior datum
$\eta\colon X\setminus\Omega\to\R$, we call the datum \emph{$(\theta,\Omega)$-admissible} if
\[
\sum_{y\in X\setminus \Omega}w(x,y)|\theta(\eta(y))| < \infty
\qquad \mbox{for every }x\in\Omega.
\]

Let $\Omega\subseteq X$ be finite. Given an
$(\operatorname{id},\Omega)$-admissible exterior datum $\eta$, the
exterior-Dirichlet Laplacian with datum $\eta$,
$\Deltadir[\Omega][\eta] \colon C(\Omega) \to C(\Omega)$, is given by
\[
\begin{aligned}
\Deltadir[\Omega][\eta] u(x)
&:=
\frac1{\mu(x)}
\sum_{y\in\Omega}w(x,y)(u(x)-u(y))
+
\frac1{\mu(x)}
\sum_{y\in X\setminus\Omega}w(x,y)(u(x)-\eta(y))
+
\frac{\kappa(x)}{\mu(x)}u(x)
\end{aligned}
\]
for $x\in\Omega$.
The zero-Dirichlet operator is $\Deltadir[\Omega]:=\Deltadir[\Omega][0]$. 
When an exhaustion $(\Omega_n)$ is given, we use the shorthand
$\Deltadir[n][\eta]:=\Deltadir[\Omega_n][\eta]$ and
$\Deltadir[n]:=\Deltadir[n][0]$.
For a finite $\Omega\subseteq X$ and $f,g\in C(\Omega)$, the 
Dirichlet energy form is
\[
\Qdir[\Omega](f,g):=\Q(\embom f,\embom g).
\]
Equivalently,
\[
\Qdir[\Omega](f,g)
=
\frac12
\sum_{x,y\in\Omega}
w(x,y)(f(x)-f(y))(g(x)-g(y))+
\sum_{\substack{x\in\Omega\\ y\notin\Omega}}
w(x,y)f(x)g(x)
+
\sum_{x\in\Omega}\kappa(x)f(x)g(x).
\]

The following localized Green's formula extends \cite[Proposition~1.5]{KLW21}.
It allows us to consider functions that are summable on $\Om$ only. 
Although standard, we give a proof for the convenience of the reader.
\begin{proposition}[Green's formula]\label{prop:greenf}
Let $\Omega\subseteq X$, let $\psi\in C_c(\Omega)$, and let $V\in C(X)$ be such that
\[
\sum_{y\in X}w(x,y)|V(y)|<\infty \quad \mbox{for every } x\in \Omega.
\]
Then,
\begin{equation}\label{eq:localized-green}
\langle\Delta V,\psi\rangle_\Omega=\Q(V,\psi).
\end{equation}
In particular, if $\Omega$ is finite, then for all $f,g\in C(\Omega)$,
\[
\langle\Deltadir[\Omega] f,g\rangle_\Omega
=
\Qdir[\Omega](f,g).
\]
\end{proposition}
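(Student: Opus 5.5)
Since $\psi$ has finite support $K\subseteq\Omega$, every sum below is over $x\in K$ in one variable, so there are no convergence issues at infinity. The only care needed is to justify splitting and reordering a doubly indexed sum over $(x,y)\in K\times X$. The plan is to expand both sides of \eqref{eq:localized-green}, check absolute convergence of each piece using the hypothesis on $V$, and then symmetrize.

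\textbf{Expanding the left-hand side.} For $x\in K$, the hypothesis together with local summability $\sum_y w(x,y)<\infty$ shows that $\sum_y w(x,y)|V(x)-V(y)|<\infty$. Hence $\Delta V(x)$ is well defined, and
\[
\langle\Delta V,\psi\rangle_\Omega=\sum_{x\in K}\sum_{y\in X}w(x,y)(V(x)-V(y))\psi(x)+\sum_{x\in K}\kappa(x)V(x)\psi(x).
\]

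\textbf{Expanding the right-hand side.} In $\Q(V,\psi)$ the term $w(x,y)(V(x)-V(y))(\psi(x)-\psi(y))$ vanishes unless $x\in K$ or $y\in K$. The double sum is therefore bounded by
\[
2\|\psi\|_\infty\sum_{x\in K}\sum_y w(x,y)|V(x)-V(y)|<\infty,
\]
so it converges absolutely. We may then split it as
\[
\tfrac12\sum_{x,y}w(x,y)(V(x)-V(y))\psi(x)\;-\;\tfrac12\sum_{x,y}w(x,y)(V(x)-V(y))\psi(y).
\]
Each of these two sums is absolutely convergent by the same bound, since each is supported on $x\in K$ and $y\in K$ respectively.

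\textbf{Symmetrizing.} In the second sum, swap $x$ and $y$ and use the symmetry $w(x,y)=w(y,x)$. It becomes the negative of the first, so the two halves add up to
\[
\sum_{x\in K}\sum_{y}w(x,y)(V(x)-V(y))\psi(x).
\]
The killing term $\sum_x\kappa(x)V(x)\psi(x)$ is a finite sum. Comparing with the expansion of the left-hand side gives \eqref{eq:localized-green}.

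\textbf{Dirichlet case.} For $\Omega$ finite, apply \eqref{eq:localized-green} with $V=\embom f$ and $\psi=\embom g$. The hypothesis holds because $V$ has finite support. On $\Omega$ we have $\Delta\embom f=\Deltadir[\Omega] f$, since the exterior values are $0$, and $\Q(\embom f,\embom g)=\Qdir[\Omega](f,g)$ by definition.

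The main obstacle is the splitting step. Without local finiteness, $K$ may have infinitely many neighbours, so it must be checked that each half-sum converges absolutely before rearranging. This is exactly where the summability assumption on $V$ over $x\in\Omega$ is used.
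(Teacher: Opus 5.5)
Your proof is correct and follows essentially the same route as the paper. Both arguments use the finiteness of $\operatorname{supp}\psi$ and the hypothesis on $V$ to get absolute convergence. Both then symmetrize the double sum via $w(x,y)=w(y,x)$ and specialize to the zero extensions of $f$ and $g$ for the Dirichlet case. The only difference is cosmetic: you split $\Q(V,\psi)$ and check each half for absolute convergence, whereas the paper rewrites $\langle\Delta V,\psi\rangle_\Omega$ into the symmetric form.
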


\begin{proof}
Since $\psi\in C_c(\Omega)$, its support
\[
S:=\supp\psi
\]
is a finite subset of $\Omega$.  By the hypothesis on $V$ and the local summability property of the
edge-weight of the graph, then, for every $x\in S$,
\begin{equation}\label{eq:V_summable}
\sum_{y\in X}w(x,y)|V(x)-V(y)|
\le
|V(x)|\sum_{y\in X}w(x,y)
+
\sum_{y\in X}w(x,y)|V(y)|
<\infty.
\end{equation}
Using the definition of $\Delta$, and using that $\psi=0$ outside $\Omega$, we
get
\[
\begin{aligned}
\langle \Delta V,\psi\rangle_\Omega
&=
\sum_{x\in\Omega}
\left[
  \frac1{\mu(x)}
  \sum_{y\in X}w(x,y)(V(x)-V(y))
  +
  \frac{\kappa(x)}{\mu(x)}V(x)
\right]\psi(x)\mu(x)
\\
&=
\sum_{x\in X}\sum_{y\in X}
w(x,y)(V(x)-V(y))\psi(x)
+
\sum_{x\in X}\kappa(x)V(x)\psi(x).
\end{aligned}
\]
By \eqref{eq:V_summable}, and since $S$ is finite, all sums above are absolutely convergent.
By symmetry of $w$,
\[
\sum_{x,y\in X}w(x,y)(V(x)-V(y))\psi(y)
=
-\sum_{x,y\in X}w(x,y)(V(x)-V(y))\psi(x).
\]
Therefore,
\[
\begin{aligned}
\sum_{x,y\in X}w(x,y)(V(x)-V(y))\psi(x)
&=
\frac12
\sum_{x,y\in X}
w(x,y)(V(x)-V(y))(\psi(x)-\psi(y)).
\end{aligned}
\]
Consequently, by the Fubini--Tonelli theorem,
\[
\langle \Delta V,\psi\rangle_\Omega
=
\frac12
\sum_{x,y\in X}
w(x,y)(V(x)-V(y))(\psi(x)-\psi(y))
+
\sum_{x\in X}\kappa(x)V(x)\psi(x)
=
\Q(V,\psi).
\]

If now $\Omega$ is finite and $f,g\in C(\Omega)$, then
\[
\embom f,\embom g\in C_c(X)\subseteq \mathcal F.
\]
Moreover,
\[
\Deltadir[\Omega] f=\projom\Delta(\embom f)
\]
and, by definition,
\[
\Qdir[\Omega](f,g)=\Q(\embom f,\embom g).
\]
Applying the first part with $V=\embom f$ and
$\psi=\embom g$ gives
\[
\begin{aligned}
\langle \Deltadir[\Omega] f,g\rangle_\Omega
=
\langle \Delta(\embom f),\embom g\rangle_\Omega
=
\Q(\embom f,\embom g)
=
\Qdir[\Omega](f,g).
\end{aligned}
\]
This proves the proposition.
\end{proof}

\subsection{Porous medium-type operators}
Next, we define the \emph{formal porous medium-type operator}. With the fixed
nonlinearity $\phi$, we denote by $\Delta \Phi$ the formal porous
medium-type operator
with domain given by 
\[
\mathcal{F}_\Phi
=
\{ f\in C(X) \mid \Phi f \in \mathcal F \}
\]
and acting as
\[
\Delta \Phi f(x) = \Delta (\phi \circ f)(x)
\]
for $x \in X$ and $f \in \mathcal{F}_\Phi$.

For $p\in[1,\infty]$, we let
\[
\dom(\Deltaphip)
=
\left\{
u\in \ell^p(X,\mu)\cap \mathcal{F}_\Phi
\mid \Delta\Phi u\in \ell^p(X,\mu)
\right\}
\]
and let $\Deltaphip u=\Delta\Phi u$ for $u \in \dom(\Deltaphip)$.
For $p=1$, we write simply $\Deltaphi$ for $\mathcal L^{(1)}$.

Typical examples of $\phi$ include
\[
\phi_m(s)=s^m:=s|s|^{m-1},
\qquad m>0.
\]

For a finite $\Omega\subseteq X$ and a $(\phi,\Omega)$-admissible exterior
datum $\eta$, the exterior datum $\phi\circ\eta$ is
$(\operatorname{id},\Omega)$-admissible. We therefore define
\[
\Deltadir[\Omega][\eta]\Phi u
:=
\Deltadir[\Omega][\phi\circ\eta](\phi\circ u)
\]
for $u\in C(\Omega)$.

\subsection{Solutions}\label{ssec:solutions}
We collect here the definitions of solutions of the~\eqref{eq:C-D} problem
used throughout this manuscript, namely pointwise, classical, and mild
solutions.

Throughout, we identify a space-time function $u(t,x)$ with the curve
$t\mapsto u(t):=u(t,\cdot)$ taking values in $\ell^p(X,\mu)$. Accordingly,
$C([0,T];\ell^p(X,\mu))$ and $C^1((0,T);\ell^p(X,\mu))$ denote, respectively,
the set of
continuous and continuously differentiable functions with respect to the
$\ell^p$-norm. When $\ell^p(X,\mu)$ is replaced by $\R$ in the above,
we simply omit it in the notations.

We begin with \emph{pointwise solutions}, that is, functions satisfying the
equation pointwise.

\begin{definition}[Pointwise solution]\label{def:pointwise_solution}
Let $u_0\in C(X)$ and $f(\cdot,x)\in C((0,T))$ for every $x\in X$. We say that
$u\colon [0,T]\times X\to\mathbb{R}$ is a \emph{pointwise solution} of
the~\eqref{eq:C-D} problem with data \((u_0,f)\) if
\begin{itemize}
    \item $u(t, \cdot)\in \mathcal{F}_\Phi$ for every $t\in(0,T)$;
    \item $u(\cdot,x)\in C([0,T])\cap C^1((0,T))$ for every $x\in X$;
    \item $(\partial_t+\Delta\Phi)u(t,x)=f(t,x)$ for every $(t,x)\in(0,T)\times X$;
    \item $u(0,x)=u_0(x)$ for every $x\in X$.
\end{itemize}
\end{definition}

We next introduce classical solutions, for which we require additional
continuity in the $\ell^p$-norm.

\begin{definition}[$\ell^p$-classical solution]\label{def:classical_solution}
Let $p\in[1,\infty]$, $u_0\in\ell^p(X,\mu)$ and
$f\in C((0,T);\ell^p(X,\mu))$. We say that
$u\colon [0,T]\to\ell^p(X,\mu)$ is an $\ell^p$-\emph{classical solution} of
the~\eqref{eq:C-D} problem with data \((u_0,f)\) if
\begin{itemize}
    \item $u\in C([0,T];\ell^p(X,\mu))\cap C^1((0,T);\ell^p(X,\mu))$;
    \item $u(t)\in\dom(\Deltaphip)$ for every $t\in(0,T)$;
    \item $(\partial_t+\Delta\Phi)u(t)=f(t)$ in $\ell^p(X,\mu)$ for every $t\in(0,T)$;
    \item $u(0)=u_0$ in $\ell^p(X,\mu)$.
\end{itemize}
\end{definition}

\begin{remark}[Classical solutions are pointwise solutions]
    Since $\mu(x)>0$ for every $x\in X$, equality in $\ell^p(X,\mu)$ implies
pointwise equality at every vertex. Moreover, for $p<\infty$ and $h \in \ell^p(X,\mu)$,
\[
|h(x)|\le \mu(x)^{-1/p}\|h\|_p,
\]
so $\ell^p$-continuity and differentiability imply pointwise continuity
and differentiability. For $p=\infty$, this is immediate from the supremum
norm. Hence, every $\ell^p$-classical solution is a pointwise solution.
\end{remark}

Next, we work towards introducing mild solutions (see e.g.,
\cite{barbu2010nonlinear} and \cite{benilan1988evolution}), whose definition
relies on the definition of $\eps$-approximate solutions given below.

\begin{definition}[$\eps$-discretization]\label{def:epsilon-discretization}
    Given a time interval $[0,T]$, with $T<\infty$, and a forcing term
    $f \in L^1_{\rm{loc}}([0,T] ; \ell^p\left(X,\mu\right))$ with $1\leq p<\infty$,
    we choose a partition of the time interval
    \[
    \mathcal{T}_N\coloneqq\{t_k\}_{k=0}^N,
    \qquad
    0=t_0<t_1<\cdots<t_N=T,
    \]
    and a sequence of approximate forcing terms
    \[
    \boldsymbol{f}_N\coloneqq\{f_k\}_{k=1}^{N},
    \qquad
    f_k\in\ell^p(X,\mu).
    \]
    Having fixed $\eps >0$, we call $\mathcal{D}_\eps\coloneqq (\mathcal{T}_N,\boldsymbol{f}_N)$ an \emph{$\eps$-discretization} of $([0,T]; f)$ if
    \begin{itemize}
        \item $t_k-t_{k-1} \leq \eps$ for every $k=1,\ldots, N$;
        \item $\sum_{k=1}^{N} \int_{t_{k-1}}^{t_{k}} \left\| f(t) - f_k \right\|_p dt \leq \eps$.
    \end{itemize}
\end{definition}

Given an $\eps$-discretization $\mathcal{D}_\eps$, we consider the
following system of difference equations which arises from an implicit
Euler-discretization of the \eqref{eq:C-D}:
\begin{equation}\label{implicit_Euler}
    (\id + \lambda_k \Deltaphip)u_k= u_{k-1} + \lambda_k f_k,
    \qquad
    \lambda_k\coloneqq t_k - t_{k-1}\mbox{ and } k=1,\ldots,N
\end{equation}
with $u_0\in\ell^p(X,\mu)$ given and $u_k \in \dom(\Deltaphip)$.

\begin{definition}[$\eps$-approximate solution]\label{epsilon-approximation}
    If the system \eqref{implicit_Euler} admits a solution
    $\boldsymbol{u}_\eps =\left\{u_k\right\}_{k=1}^N$ such that
    $u_k\in \dom(\Deltaphip)$ for every $k=1,\ldots, N$, then we define
    $u_\eps$ as the piecewise constant function
    \begin{equation}\label{epsilon_approximation}
    u_\eps(t)\coloneqq \begin{cases}
        \sum_{k=1}^N u_k\mathds{1}_{(t_{k-1},t_k]}(t) & \mbox{for } t\in (0, T]\\
        u_0 & \mbox{for } t=0
    \end{cases}
    \end{equation}
    and we call $u_\eps$ an \emph{$\eps$-approximate solution} of
    \eqref{eq:C-D} (subordinate to $\mathcal{D}_\eps$). 
\end{definition}

Finally, we come to the definition of mild solutions which are
uniform limits of $\eps$-approximate solutions. This notion of solutions is particularly natural in the framework of accretive operators. See for example~\cite{benilan1988evolution,barbu2010nonlinear}.

\begin{definition}[Mild solution]\label{def:weak_solution}
    Let $u_0\in\ell^p(X,\mu)$ and
    $f \in L^1_{\rm{loc}}([0,T] ; \ell^p\left(X,\mu\right))$ with $1\leq p<\infty$.
    If $T<\infty$, we say that $u \colon [0,T]\to \ell^p\left(X,\mu\right)$
    is a \emph{mild solution} of the \eqref{eq:C-D} with data \((u_0,f)\) if
    $u \in C\left( [0,T]; \ell^p\left(X,\mu\right)\right)$ and $u$ is obtained
    as a uniform limit of $\eps$-approximate solutions. Namely, for every
    $\eps>0$, there exists an $\eps$-discretization $\mathcal{D}_\eps$
    of $([0,T]; f)$, as in Definition \ref{def:epsilon-discretization}, and an
    $\eps$-approximate solution $u_\eps$ subordinate to
    $\mathcal{D}_\eps$, as in \eqref{epsilon_approximation}, such that
    \begin{equation*}
    \left\|u(t) - u_\eps(t) \right\|_p < \eps
    \qquad \mbox{for every } t\in [0,T].
    \end{equation*}

    If $T = \infty$, then we say that $u$ is a mild solution of the \eqref{eq:C-D}
    with data \((u_0,f)\)
    if the restriction of $u$ to each compact subinterval
    $[0,a]\subset [0,\infty)$ is a mild solution of the \eqref{eq:C-D} on $[0,a]$.
\end{definition}
Notice that $u_\eps(t) \in \dom(\Deltaphip)$ for every $t$, whereas a mild solution $u(t)$ may not be.

\begin{remark}[Classical solutions are mild solutions]
    Mild solutions as defined above are also known in the literature as $C^0$
solutions, see \cite[Chapter IV.8]{showalter2013monotone}. Every classical solution  is a mild solution;
see, e.g., \cite[Theorem 1.4]{benilan1988evolution}. 
\end{remark}

\subsection{Assumptions on the graph}\label{sec:assumptions}
We shall use the following additional hypotheses only when named:
\begin{enumerate}[label={}, leftmargin=*, align=left]
    \conditem{BD}{\textbf{(BD)}} $\displaystyle \sup_{x\in X}\Deg(x)<\infty$. \hfill(``Bounded degree'')
    \conditem{FM}{\textbf{(FM)}} $\mu(X)<\infty$. \hfill(``Finite measure'')
    \conditem{SP}{\textup{\textbf{(S\textsubscript{$\nu$})}}}
    For $\nu>2$, there is $C_\nu>0$ such that
    $\|\psi\|_\nu^2\le C_\nu\,\Q(\psi)$ for all $\psi\in C_c(X)$.
    \hfill(``$\nu$-Sobolev'')
    \conditem{UM}{\textbf{(UM)}} $\displaystyle \inf_{x\in X}\mu(x)>0$. \hfill(``Uniformly positive measure'')
   \conditem{SCinf}{\textup{\textbf{(SC\textsubscript{$\infty$})}}}\label{def:SC-infinity} $G$ is stochastically complete at infinity, i.e.,  if $h\in\ell^\infty(X)$ satisfies
   $h+\lambda\Delta h=0$ with $\lambda>0$,  then $h=0$.  \hspace*{\fill}(``Stochastic completeness at infinity'')
   \conditem{SC}{\textbf{(SC)}} $G$ is stochastically complete, i.e., the minimal heat semigroup for $\kappa=0$ preserves the constant function $1$. \hspace*{\fill}(``Stochastic completeness'')
   \conditem{C}{\textbf{(C)}} $\Phi(\ell^1(X,\mu))\subseteq\ell^1(X,\mu)$. \hfill(``$\ell^1$-containment'')
   \conditem{LG}{\textbf{(LG)}}\label{eq:LG} 
   $\limsup_{r\downarrow0}\frac{\phi(r)}{r}<\infty.$ \hfill(``(Super)-linear growth at $0$'')
\end{enumerate}

\section{Comparison principles and bounded pointwise solutions}
\label{sec:bounded-pointwise}

In this section, we study the generalized porous medium equation on finite subsets $\Omega$ with Dirichlet boundary data, and then, by exhaustion, construct extremal pointwise solutions of the global problem \eqref{eq:C-D}, defined for all times, with initial data in $\ell^\infty$ and forcing terms which are continuous in time.

For a finite $\Omega\subset X$, let 
\[
  \GammaT=(0,T]\times\Omega,
  \qquad
  \partialpGammaT
  =
  (\{0\}\times\Omega)\cup([0,T]\times\partial_e\Omega),
\] where $\GammaT$ is a discrete analog of the parabolic cylinder based on $\Omega$ 
and $\partialpGammaT$ is the parabolic boundary of $\GammaT$.
We also use the shorthand
\[
\CtGammaT
:=
\{u \colon [0,T]\times\Omega\to\R \mid
u(\cdot,x)\in C([0,T])\cap C^1((0,T))\ \text{for all } x\in\Omega\}.
\]
We also set
\[
\CtGammaTClosure
:=
\{u \colon [0,T]\times\overline\Omega\to\R \mid
u_{|_{[0,T]}\times\Omega}\in \CtGammaT\}.
\]
When a function on a finite $\Omega$ is used as a test function or in a finite
Dirichlet form, we identify it with its zero extension in $C_c(X)$.

\subsection{Comparison principles}
For the \eqref{eq:C-D} on Euclidean spaces, several parabolic comparison principles are available. We mention V\'azquez's monograph~\cite{vazquez2007porous}, in particular Theorem~5.5, as well as~\cite[Theorem~3.3]{kinnunen2016perron} and~\cite[Theorem~3.1]{avelin2017comparison}. In the graph setting, several others can be found with respect to closely related equations, for instance in~\cite{ma2022porous,biagi2025phragmen,berchio2026semilinear}.

In this subsection we state and prove an exterior-Dirichlet comparison principle tailored to the finite set exhaustion construction used in our proof of pointwise solutions. The proof's idea adapts that of \cite[Proposition 3.5]{vazquez2007porous}, with necessary modifications since the chain rule is unavailable in the discrete setting.
We note that the existence of such solutions on finite graphs will be addressed later when we construct
solutions on the entire graph via an exhaustion argument.

\begin{theorem}[Comparison principle]\label{thm:comparison}
Let $\Om\subseteq X$ be finite. For $i=1,2$, let
\[
g_i \colon [0,T]\times\partial_e\Omega\to\R
\]
be such that $g_i(t,\cdot)$ is $(\phi,\Omega)$-admissible for every $t\in[0,T]$ and
let $u_i\in \CtGammaT$ solve
\begin{equation}\label{eq:dirichlet-comparison}
\begin{cases}
  \left(\pt +\Deltadir[\Om][g_i(t,\cdot)]\Phi\right) u_i=f_i
  & \textup{on }(0,T)\times\Om\\
  u_i(0,\cdot)=u_{i,0} & \textup{on }\Om.
\end{cases}
\end{equation}
Assume
\[
  f_1\le f_2\quad\textup{on }(0,T)\times\Om,\qquad
  g_1\le g_2\quad\textup{on }[0,T]\times\partial_e\Om\qquad \text{and} \qquad
  u_{1,0}\le u_{2,0}\quad\textup{on }\Om.
\]
Then,
\[
  u_1\le u_2\qquad\textup{on }[0,T]\times\Om.
\]
\end{theorem}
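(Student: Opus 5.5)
We will prove the comparison by an $\ell^1$-contraction (Kato-type) argument on the finite set $\Omega$, in place of the Euclidean chain rule. Set $w:=u_1-u_2$ on $[0,T]\times\Omega$. The goal is to show that the positive mass
\[
M(t):=\sum_{x\in\Omega}(u_1(t,x)-u_2(t,x))^+\mu(x)
\]
vanishes for all $t$. Since $M(0)=0$ by the ordering of the initial data, it suffices to show that $M$ is nonincreasing on $[0,T]$.

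\textbf{Step 1: regularity of $M$.} Because $\Omega$ is finite, $M$ is a finite sum. Each term $t\mapsto (w(t,x))^+$ is continuous on $[0,T]$ and locally Lipschitz on $(0,T)$. Hence $M$ is absolutely continuous on compact subintervals, and for a.e.\ $t$
\[
M'(t)=\sum_{x\in\Omega}\mathds 1_{\{w>0\}}(t,x)\,\partial_t w(t,x)\,\mu(x).
\]
At points where $w(t,x)=0$, the derivative of $w^+$ exists a.e.\ and equals $0$ there, so this formula is legitimate.

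\textbf{Step 2: differential inequality.} We insert the equations to obtain
\[
\partial_t w=f_1-f_2-\bigl(\Deltadir[\Om][g_1]\Phi u_1-\Deltadir[\Om][g_2]\Phi u_2\bigr).
\]
Since $f_1-f_2\le 0$, it remains to show
\[
\sum_{x\in P}\mu(x)\bigl(\Deltadir[\Om][g_1]\Phi u_1-\Deltadir[\Om][g_2]\Phi u_2\bigr)(x)\ge0,
\qquad P:=\{x\in\Omega\mid w(t,x)>0\}.
\]
Write $h:=\phi(u_1)-\phi(u_2)$ on $\Omega$. Monotonicity of $\phi$ gives $h\ge0$ on $P$. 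Expanding the definition, the sum becomes the sum of three contributions.
\begin{enumerate}[(a)]
\item The interior part $\sum_{x\in P}\sum_{y\in\Omega}w(x,y)(h(x)-h(y))$. Pairs with $x,y\in P$ cancel by symmetry of $w$. The remaining terms have $x\in P$ and $y\in\Omega\setminus P$, where $w(t,y)\le0$ forces $h(y)\le0\le h(x)$, so they are nonnegative.
\item The boundary part $\sum_{x\in P}\sum_{y\notin\Omega}w(x,y)\bigl(h(x)-(\phi(g_1(y))-\phi(g_2(y)))\bigr)$. It is nonnegative because $g_1\le g_2$ and $\phi$ is increasing. Absolute convergence follows from the $(\phi,\Omega)$-admissibility and the local summability of $w$.
\item The killing part $\sum_{x\in P}\kappa(x)h(x)\ge0$.
\end{enumerate}
Hence $M'\le0$ a.e., and therefore $M\equiv0$, which gives $u_1\le u_2$.

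\textbf{Main obstacle.} The delicate point is Step 1 together with the sign bookkeeping in (a), carried out without a chain rule. Because $\phi$ need not be injective, $h$ may vanish on $P$. This only weakens the inequality and does no harm, but it is the reason we work with the indicator $\mathds 1_{\{u_1>u_2\}}$ rather than $\mathds 1_{\{h>0\}}$.

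The endpoint $t=0$ also needs care, since differentiability holds only on $(0,T)$. We handle it by integrating $M'$ over $[s,t]\subset(0,T)$ and letting $s\downarrow0$, using the continuity of each $u_i(\cdot,x)$ on $[0,T]$. Continuity up to $t=T$ then extends the conclusion to the closed interval.

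If a cleaner argument is preferred, we can replace $w^+$ by a smooth convex approximation $\eta_\delta(w)$ with $\eta_\delta'$ increasing and supported in $(0,\infty)$, and apply the same pairing through the Green's formula, Proposition~\ref{prop:greenf}. The symmetric term then becomes $\tfrac12\sum w(x,y)(h(x)-h(y))(\eta'_\delta(w(x))-\eta'_\delta(w(y)))\ge0$, after which we let $\delta\to0$.
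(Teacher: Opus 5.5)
Your main argument is correct and is essentially the paper's proof. The paper also shows that $t\mapsto\sum_{x\in\Omega}(u_1-u_2)_+(t,x)\mu(x)$ is nonincreasing by pairing the equation with $\mathds{1}_{\{u_1>u_2\}}$, and it signs the interior, exterior-boundary and killing contributions exactly as you do. The only difference is that it symmetrizes the interior edges through the localized Green formula, Proposition~\ref{prop:greenf}, where you cancel the $P\times P$ pairs by hand; it also handles $t=0$ the same way, by integrating on $[s,t]$ and letting $s\downarrow0$.

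You should drop the closing ``cleaner'' variant, however, because its key claim is false at fixed $\delta$. For a smooth $\eta_\delta$, the product $(h(x)-h(y))\bigl(\eta_\delta'(u_1(x)-u_2(x))-\eta_\delta'(u_1(y)-u_2(y))\bigr)$ need not be nonnegative, since $u_1(x)-u_2(x)>u_1(y)-u_2(y)>0$ does not force $h(x)\ge h(y)$. For instance, take $\phi=\phi_3$, $u_2(x)=0$, $u_1(x)=1$, $u_2(y)=R$, $u_1(y)=R+\varepsilon$, with $\varepsilon$ small and $R$ large. The sign argument genuinely relies on the indicator, which only distinguishes $P$ from its complement.
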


\begin{proof}
Set
\[
  U_i(t,\cdot):=E_\Omega^{g_i(t,\cdot)}u_i(t,\cdot),
  \qquad
  V(t,\cdot):=\Phi U_1(t,\cdot)-\Phi U_2(t,\cdot),
\]
and let $u=u_1-u_2$ and $f=f_1-f_2$. Then, on $(0,T) \times \Omega$,
\[
  \pt u+\Delta V=f\le0.
\]
Moreover, $u(0,\cdot)\le0$ and, since $g_1\le g_2$ and $\phi$ is monotone increasing,
\[
V\le0
\qquad\text{on }[0,T]\times\partial_e\Omega.
\]

For $r\in\mathbb R$, write $r_+=\max\{r,0\}$ and define
\[
  P(t)=\sum_{x\in\Om}u_+(t,x)\mu(x).
\]
We will show that $P(t)=0$
for all $t \in [0,T]$, which implies $u_+=0$ and thus completes the proof.

Since $r\mapsto r_+$ is Lipschitz and $u(\cdot,x)$ is $C^1$ on $(0,T)$,
$P$ is locally absolutely continuous on $(0,T)$ 
and
for a.e. $t\in(0,T)$,
\[
  P'(t)=\sum_{\{x\in\Om \ \mid \,u(t,x)>0\}}\pt u(t,x)\mu(x).
\]
Let $\chi_t=\mathds{1}_{\{u(t,\cdot)>0\}}\in C(\Omega)$. By the
$(\phi,\Omega)$-admissibility of $g_1(t,\cdot)$ and $g_2(t,\cdot)$ and the
finiteness of $\Omega$,
$V(t,\cdot)$ satisfies the local summability hypothesis of the localized Green's formula
Proposition~\ref{prop:greenf} on $\Omega$ which gives
\begin{equation}\label{eq:P-prime-estimate}
  P'(t)=\langle f(t,\cdot),\chi_t\rangle_\Om
        -\langle \Delta V(t,\cdot),\chi_t\rangle_\Om
      =
        \langle f(t,\cdot),\chi_t\rangle_\Om
        -I(t)
\end{equation}
where
\[
\begin{aligned}
I(t)
&:=
\frac12\sum_{x,y\in\Omega}
w(x,y)(V(t,x)-V(t,y))(\chi_t(x)-\chi_t(y))\\
&\quad+
\sum_{\substack{x\in\Omega\\ y\in\partial_e\Omega}}
w(x,y)(V(t,x)-V(t,y))\chi_t(x)
+
\sum_{x\in\Omega}\kappa(x)V(t,x)\chi_t(x).
\end{aligned}
\]

We claim that
\begin{equation}\label{eq:Q-v-chi-nonnegative}
  I(t)\ge0.
\end{equation}
Indeed, for internal edges, if $\chi_t(x)=1$ and $\chi_t(y)=0$, then
$u(t,x)>0$ and $u(t,y)\le0$. Since $\phi$ is monotone increasing, this
implies $V(t,x)\ge0$ and $V(t,y)\le0$. Hence
\[
  [V(t,x)-V(t,y)][\chi_t(x)-\chi_t(y)]\ge0.
\]
The same conclusion holds when $\chi_t(x)=0$ and $\chi_t(y)=1$ and the
internal contribution is zero when $\chi_t(x)=\chi_t(y)$. For exterior edges,
only vertices $x\in\Omega$ with $\chi_t(x)=1$ contribute and then
$V(t,x)\ge0$ while $V(t,y)\le0$ because $g_1\le g_2$ on
$\partial_e\Omega$. The killing contribution is nonnegative because
$\kappa\ge0$ and $V(t,x)\ge0$ on $\{\chi_t=1\}$. This proves
\eqref{eq:Q-v-chi-nonnegative}.

Since $f\le0$ and $\chi_t\ge0$, \eqref{eq:P-prime-estimate} yields
$P'(t)\le0$ for a.e. $t\in(0,T)$. Thus, $P(t)\le P(s)$ whenever
$0<s<t<T$. Letting $s\downarrow0$ and using the continuity of $P$ together
with $P(0)=0$, we obtain $P(t)=0$ on $[0,T)$; the endpoint $t=T$ follows by
continuity. Therefore, $u_1\le u_2$ on $[0,T]\times\Omega$.
\end{proof}

A function $u\in \CtGammaTClosure$ is said to have a
\emph{$(\phi,\Omega)$-admissible exterior trace} 
if
$u(t,\cdot)|_{\partial_e\Omega}$ is $(\phi,\Omega)$-admissible for every
$t\in[0,T]$.
A function $u\in \CtGammaTClosure$ with a
$(\phi,\Omega)$-admissible exterior trace is called a
subsolution, respectively, supersolution, of the \eqref{eq:GPME} on $\GammaT$ if
\[
\left(\partial_t +\Deltadir[\Omega][u|_{\partial_e\Omega}]\Phi\right) u\le0,
\qquad\text{respectively,}\qquad
\left(\partial_t +\Deltadir[\Omega][u|_{\partial_e\Omega}]\Phi\right) u\ge0
\]
on $(0,T)\times\Omega$.

As a consequence of the last result, we now establish
comparisons between solutions based on their behavior on the parabolic
boundary as well as uniqueness of solutions. We will later
address the existence of solutions which follows by general theory.
\begin{corollary}\label{c:comparison}
Let $\Omega\subseteq X$ be finite and let
$u_1,u_2\in \CtGammaTClosure$ have
$(\phi,\Omega)$-admissible traces. Assume
\[
    \left(\partial_t  +\Deltadir[\Omega][u_1|_{\partial_e\Omega}]\Phi \right) u_1
    \leq
    \left(\partial_t  + \Deltadir[\Omega][u_2|_{\partial_e\Omega}]\Phi \right) u_2
    \qquad \text{on }(0,T)\times\Omega,
\]
and
\[
    u_1\le u_2
    \qquad \text{on }\partialpGammaT.
\]
Then, $u_1\leq u_2$ on $[0,T]\times\Omega$. In particular, ordered
subsolutions and supersolutions remain ordered under the same
$(\phi,\Omega)$-admissibility hypothesis.

Moreover, if
\[
    \left(\partial_t  +\Deltadir[\Omega][u_1|_{\partial_e\Omega}]\Phi \right) u_1
    =
    \left(\partial_t  + \Deltadir[\Omega][u_2|_{\partial_e\Omega}]\Phi \right) u_2
    \qquad \text{on }(0,T)\times\Omega
\]
and $u_1=u_2$ on $\partialpGammaT$, then $u_1=u_2$ on
$[0,T]\times\Omega$. That is, for fixed forcing, initial data and exterior
boundary data, the exterior-Dirichlet problem on a finite $\Omega$ admits at
most one solution in $\CtGammaTClosure$ with a
$(\phi,\Omega)$-admissible exterior datum and, since
$T>0$ is arbitrary, any two such solutions coincide on every common interval
of existence.
\end{corollary}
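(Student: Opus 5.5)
The corollary follows from Theorem~\ref{thm:comparison} by reading off the data. Define
\[
f_i:=\left(\partial_t+\Deltadir[\Omega][u_i|_{\partial_e\Omega}]\Phi\right)u_i,\qquad g_i:=u_i|_{[0,T]\times\partial_e\Omega},\qquad u_{i,0}:=u_i(0,\cdot).
\]
Then $u_i|_{[0,T]\times\Omega}\in\CtGammaT$ solves \eqref{eq:dirichlet-comparison} with data $(f_i,g_i,u_{i,0})$. Each $g_i(t,\cdot)$ is $(\phi,\Omega)$-admissible by the trace assumption. If the exterior datum is only prescribed on $\partial_e\Omega$, we extend it by zero on $X\setminus\overline\Omega$, as in the convention fixed earlier; this changes nothing, since $w(x,y)=0$ there for $x\in\Omega$.

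The hypotheses of Theorem~\ref{thm:comparison} are then immediate. The assumed inequality of the operators gives $f_1\le f_2$. The condition $u_1\le u_2$ on $\partialpGammaT$ splits along the two pieces of the parabolic boundary. On $\{0\}\times\Omega$ it gives $u_{1,0}\le u_{2,0}$. On $[0,T]\times\partial_e\Omega$ it gives $g_1\le g_2$. The theorem then yields $u_1\le u_2$ on $[0,T]\times\Omega$.

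One point needs care: Theorem~\ref{thm:comparison} only requires pointwise inequalities for the $f_i$, with no regularity in time. Its proof differentiates $P$ only almost everywhere, and uses only that $u_i\in\CtGammaT$. So defining $f_i$ through the equation as above is legitimate, and this is the only potential obstacle. The statement about sub- and supersolutions is the special case
\[
\left(\partial_t+\Deltadir[\Omega][u_1|_{\partial_e\Omega}]\Phi\right)u_1\le0\le\left(\partial_t+\Deltadir[\Omega][u_2|_{\partial_e\Omega}]\Phi\right)u_2.
\]

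For uniqueness, equality of the operators and $u_1=u_2$ on $\partialpGammaT$ give both sets of inequalities. Applying the first part with the roles of $u_1$ and $u_2$ exchanged gives $u_2\le u_1$ as well, hence $u_1=u_2$ on $[0,T]\times\Omega$.

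For two solutions defined on different intervals, apply this on $[0,T']$ for every $T'$ in the common interval. This is possible because restricting to $[0,T']$ preserves membership in $\CtGammaTClosure$ and admissibility of the traces. Hence the two solutions coincide on the whole common interval of existence.
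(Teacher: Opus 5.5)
Your proposal is correct and matches the paper's proof: the paper also applies Theorem~\ref{thm:comparison} with $g_i=u_i|_{[0,T]\times\partial_e\Omega}$ and with $f_i$ defined as the operator applied to $u_i$, and it obtains uniqueness by swapping the roles of $u_1$ and $u_2$. Your additional check, that the theorem needs no time regularity of $f_i$ because $P$ is only differentiated almost everywhere, is correct; the paper leaves this point implicit.
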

\begin{proof}
Apply Theorem~\ref{thm:comparison} with
$g_i=u_i|_{[0,T]\times\partial_e\Omega}$ and
$f_i=\left(\partial_t +\Deltadir[\Omega][g_i(t,\cdot)]\Phi \right) u_i$ on $\Omega$.
The uniqueness statement follows by applying the comparison twice, with the
roles of $u_1$ and $u_2$ interchanged.
\end{proof}

Next we show that sub/super solutions attain their extrema on the boundary.

\begin{lemma}[Weak maximum principle]\label{l:boundary_max}
Let $\Omega\subseteq X$ be finite. If
$u\in \CtGammaTClosure$ is a subsolution of the \eqref{eq:GPME} on
$\GammaT$ with a $(\phi,\Omega)$-admissible exterior trace, then
\[
  \sup_{[0,T]\times\Omega}u
  \le
  \max\left\{0,\sup_{\partialpGammaT}u\right\}.
\]
If $u\in \CtGammaTClosure$ is a supersolution of the
\eqref{eq:GPME} on $\GammaT$ with a $(\phi,\Omega)$-admissible exterior
trace, then
\[
  \inf_{[0,T]\times\Omega}u
  \ge
  \min\left\{0,\inf_{\partialpGammaT}u\right\}.
\]
If $\kappa =0$, the sharper boundary-only bounds hold:
\[
  \sup_{[0,T]\times\Omega}u
  \le
  \sup_{\partialpGammaT}u
  \qquad \text{and} \qquad
  \inf_{[0,T]\times\Omega}u
  \ge
  \inf_{\partialpGammaT}u.
\]
\end{lemma}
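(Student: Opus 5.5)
The plan is to compare $u$ with a constant supersolution via Corollary~\ref{c:comparison}. Let $M:=\max\{0,\sup_{\partialpGammaT}u\}$ and assume $M<\infty$; otherwise there is nothing to prove. Consider the constant function $v\equiv M$ on $[0,T]\times\overline\Omega$. It lies in $\CtGammaTClosure$. Its exterior trace is $(\phi,\Omega)$-admissible because $\sum_{y}w(x,y)|\phi(M)|<\infty$ by local summability. Moreover $\partial_t v=0$, and for $x\in\Omega$,
\[
\Deltadir[\Omega][M]\Phi v(x)=\frac{1}{\mu(x)}\sum_{y\in X}w(x,y)\bigl(\phi(M)-\phi(M)\bigr)+\frac{\kappa(x)}{\mu(x)}\phi(M)=\frac{\kappa(x)}{\mu(x)}\phi(M)\ge0 .
\]
This uses $M\ge0$, $\phi(0)=0$ and the monotonicity of $\phi$. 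Hence $v$ is a supersolution, and
\[
\Bigl(\partial_t+\Deltadir[\Omega][u|_{\partial_e\Omega}]\Phi\Bigr)u\le0\le\Bigl(\partial_t+\Deltadir[\Omega][v|_{\partial_e\Omega}]\Phi\Bigr)v
\qquad\text{on }(0,T)\times\Omega .
\]
Since $u\le M=v$ on $\partialpGammaT$, Corollary~\ref{c:comparison} gives $u\le M$ on $[0,T]\times\Omega$.

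For the supersolution statement, the argument is symmetric. Set $m:=\min\{0,\inf_{\partialpGammaT}u\}\le0$ and $v\equiv m$. Then $\Deltadir[\Omega][m]\Phi v=\kappa\phi(m)/\mu\le0$, so $v$ is a subsolution lying below $u$ on the parabolic boundary. Comparison then yields $v\le u$.

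When $\kappa=0$, the killing contribution vanishes identically, so any constant $c$ satisfies $(\partial_t+\Deltadir[\Omega][c]\Phi)c=0$, with no sign condition on $c$. Taking $c=\sup_{\partialpGammaT}u$, respectively $c=\inf_{\partialpGammaT}u$, in the same comparison gives the sharper bounds.

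The only delicate point is the case where $\sup_{\partialpGammaT}u=+\infty$. This can happen because $\partial_e\Omega$ may be infinite when the graph is not locally finite. In that case the inequality is trivial, so it needs only a remark. One must also check that the constant trace is admissible, which is exactly local summability. No genuine obstacle is expected beyond these checks.
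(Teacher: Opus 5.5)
Your proposal is correct and follows essentially the same route as the paper: compare $u$ with the constant $\max\{0,\sup_{\partialpGammaT}u\}$ (respectively $\min\{0,\inf_{\partialpGammaT}u\}$) through Corollary~\ref{c:comparison}. You use the sign of $\kappa\phi(\cdot)$ to make these constants super- and subsolutions, and you note that every constant works when $\kappa=0$. Like the paper, you treat the infinite-supremum/infimum case as trivial and get admissibility of constant traces from local summability of $w$.
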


\begin{proof}
We address the estimate for the subsolution first.
If $\sup_{\partialpGammaT}u=\infty$, the upper estimate is trivial.
Otherwise, set
\[
C:=\max\left\{0,\sup_{\partialpGammaT}u\right\}.
\]
Every spatially constant function has a $(\phi,\Omega)$-admissible exterior
trace by the local summability of $w$. The constant function $C$ is a
supersolution because $C\ge0$ and
$(\partial_t +\Delta\Phi) C=(\kappa/\mu)\phi(C)\ge0$. Since $u\le C$ on the
parabolic boundary, Corollary~\ref{c:comparison} gives $u\le C$ on
$[0,T]\times\Omega$.

we next address the supersolution case.
If $\inf_{\partialpGammaT}u=-\infty$, the lower estimate is trivial.
Otherwise, the lower bound follows by comparing $u$ with the constant
$c:=\min\{0,\inf_{\partialpGammaT}u\}$, which is a subsolution because
$c\le0$.

If $\kappa=0$, every constant is both a subsolution and a supersolution, so
the same comparison argument gives the sharper estimates without the zero
barrier.
\end{proof}

\subsection{Extremal pointwise solutions}
We are now ready to prove the existence of extremal pointwise solutions to the \eqref{eq:C-D} with bounded initial datum and a forcing term which is continuous in time. The proof proceeds by first solving the \eqref{eq:C-D} on finite sets and then passing to the limit via monotone pointwise convergence along an exhaustion.
This is analogous to the construction of the heat kernel via an exhaustion argument as carried out for manifolds in \cite{Dod83} and for graphs in \cite{Woj08, Woj09, KL12, KLW21}.

The solutions we construct are defined for all times. To this end, we call a function $u\colon[0,\infty)\times X\to\R$ a \emph{global pointwise solution} of the \eqref{eq:C-D} if the restriction
of $u$ to $[0,T]\times X$ is a pointwise solution for every $T>0$ and a \emph{global bounded pointwise solution} if, in addition, $u$ is bounded on $[0,\infty)\times X$.

Our existence result allows for a non-homogeneous term in the \eqref{eq:C-D}. 
Specifically, we consider a forcing term
\[
f\in C\bigl([0,\infty);\ell^\infty(X)\bigr),
\]
that is, $t\mapsto f(t,\cdot)$ is continuous as an $\ell^\infty(X)$-valued map on $[0,\infty)$.
In particular, $f(\cdot,x)\in C([0,\infty))$ for every $x\in X$ and
\[
\sup_{t\in[0,T]}\|f(t,\cdot)\|_\infty<\infty
\qquad \text{for every }T>0.
\]
Writing
\[
f^{\pm}(t,x):=\max\{\pm f(t,x),0\}
\]
for the positive and negative parts of $f$, we associate, to every pair of constants $A\le0\le B$, the \emph{barrier functions}
\begin{equation}
\label{eq:barriers}
a(t):=A-\int_0^t\|f^-(s,\cdot)\|_\infty\dd s
\qquad \text{and} \qquad
b(t):=B+\int_0^t\|f^+(s,\cdot)\|_\infty\dd s.
\end{equation}
Note that $a$ is nonincreasing, $b$ is nondecreasing and
\[
a(t)\le A\le0\le B\le b(t)
\qquad \text{for every }t\ge0.
\]
We regard $a$ and $b$ also as functions on $[0,\infty)\times X$ which are constant in the spatial variable. In the homogeneous case $f=0$, the barriers reduce to the constants $a= A$ and $b= B$.

\begin{theorem}
\label{thm:extremal-bounded-pointwise-solutions}
Let \(G\) be a graph, let
\[
u_0\in\ell^\infty(X),
\qquad
f\in C\bigl([0,\infty);\ell^\infty(X)\bigr),
\]
and choose constants \(A,B\in\R\) such that
\[
A\le \min\{0,\inf_Xu_0\}
\qquad\text{and}\qquad
B\ge \max\{0,\sup_Xu_0\}.
\]
Let \(a\) and \(b\) be the associated barrier functions \eqref{eq:barriers}. Then, there exist global pointwise solutions
\[
\underline u^{A,f},\ \overline u^{B,f}
\colon[0,\infty)\times X\to\R
\]
of the \eqref{eq:C-D} with data \((u_0,f)\), bounded on
\([0,T]\times X\) for every \(T>0\), with the following properties:
\begin{enumerate}
\item[\textup{(i)}] \textup{(Barrier bounds)} For every \((t,x)\in[0,\infty)\times X\),
\begin{equation}
\label{eq:forced-extremal-bounds}
a(t)
\le
\underline u^{A,f}(t,x)
\le
\overline u^{B,f}(t,x)
\le
b(t).
\end{equation}
\item[\textup{(ii)}] \textup{(Minimality)} For every \(T>0\) and every bounded pointwise solution \(v\colon[0,T]\times X\to\R\) of the \eqref{eq:C-D} with the same data \((u_0,f)\) satisfying \(v\ge a\) on \([0,T]\times X\), one has
\[
\underline u^{A,f}\le v
\qquad
\textup{on }[0,T]\times X.
\]
\item[\textup{(iii)}] \textup{(Maximality)} Dually, for every \(T>0\) and every bounded pointwise solution \(v\colon[0,T]\times X\to\R\) of the \eqref{eq:C-D} with the same data \((u_0,f)\) satisfying \(v\le b\) on \([0,T]\times X\), one has
\[
v\le\overline u^{B,f}
\qquad
\textup{on }[0,T]\times X.
\]
\item[\textup{(iv)}] \textup{(Trapping)} Every bounded pointwise solution \(v\) of the \eqref{eq:C-D} on \([0,T]\times X\) with the same data \((u_0,f)\) such that \(a\le v\le b\) satisfies
\[
\underline u^{A,f}
\le v\le
\overline u^{B,f}
\qquad
\textup{on }[0,T]\times X.
\]
\item[\textup{(v)}] \textup{(Integrable forcing)} If, in addition,
\begin{equation}
\label{eq:integrable-forcing-infinity}
\int_0^\infty\|f(t,\cdot)\|_\infty\dd t<\infty,
\end{equation}
then \(\underline u^{A,f}\) and \(\overline u^{B,f}\) are global bounded pointwise solutions.
\item[\textup{(vi)}] \textup{(Independence of the exhaustion)}
The lower and upper extremal solutions obtained by the finite-domain
monotone constructions in the proof are independent of the chosen
exhaustion of \(X\) by finite connected subsets.
\end{enumerate}
\end{theorem}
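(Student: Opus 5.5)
We construct both extremal solutions by a monotone exhaustion; for definiteness we describe the lower one, the upper one being dual. Fix an exhaustion $(\Omega_n)$ of $X$ by finite connected sets (Lemma~\ref{lem:exhaustion}). On each $\Omega_n$ we solve the exterior-Dirichlet problem
\[
\bigl(\partial_t+\Deltadir[n][a(t)]\Phi\bigr)u_n=f\ \text{on }(0,\infty)\times\Omega_n,\qquad u_n(0)=u_0|_{\Omega_n},
\]
so the exterior datum is the lower barrier $a(t)$. This datum is $(\phi,\Omega_n)$-admissible because it is spatially constant and $w$ is locally summable. The system is a finite ODE system $\partial_t u=F(t,u)$ with $F$ continuous, and $F$ is quasi-monotone: increasing $u(y)$ for $y\neq x$ does not increase $-\Deltadir\Phi u(x)$. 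Peano's theorem therefore gives local solutions, and uniqueness follows from Corollary~\ref{c:comparison}.

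The barriers give a priori bounds. The function $a$ is a subsolution, since $a'=-\|f^-\|_\infty\le f$ and $\Deltadir[n][a]\Phi a=(\kappa/\mu)\phi(a)\le0$ because $a\le0$. Likewise $b$ is a supersolution when the exterior datum is $\le b$. Theorem~\ref{thm:comparison} then yields $a\le u_n\le b$. Boundedness prevents blow-up, so $u_n$ is global.

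Next we compare consecutive domains. On $\Omega_n$, the function $u_{n+1}$ solves the problem with exterior datum $u_{n+1}|_{\Omega_{n+1}\setminus\Omega_n}\ge a$ together with $a$ further out. Since its datum dominates that of $u_n$, Theorem~\ref{thm:comparison} gives $u_n\le u_{n+1}$ on $\Omega_n$. Hence $u_n$ increases to a limit $\underline u\in[a,b]$.

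The main obstacle is passing to the limit in the equation without local finiteness. We plan to proceed as follows.
\begin{enumerate}
\item On $[0,T]$, the functions $\Phi u_n$ are bounded uniformly by $\max\{|\phi(a(T))|,\phi(b(T))\}$.
\item Because $\sum_y w(x,y)<\infty$, dominated convergence gives $\Delta\Phi u_n(t,x)\to\Delta\Phi\underline u(t,x)$. Here $\phi$ is continuous, and for $y\notin\Omega_n$ the value $\phi(a)$ is used; this sits in the tail $\sum_{y\notin\Omega_n}w(x,y)\to0$.
\item For fixed $x$, the derivatives $\partial_tu_n(\cdot,x)$ are uniformly bounded, so the $u_n(\cdot,x)$ are equi-Lipschitz.
\item Passing to the limit in the integrated form $u_n(t,x)=u_0(x)+\int_0^t(f-\Delta\Phi u_n)\,ds$, with dominated convergence in time, shows that $\underline u$ is a $C^1$ pointwise solution.
\end{enumerate}
This proves existence and (i). The ordering $\underline u\le\overline u$ follows by comparing the finite approximants, whose data satisfy $a\le b$.

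For (ii), take a bounded pointwise solution $v\ge a$. Restricted to $\Omega_n$, $v$ solves the problem with exterior datum $v\ge a$, and this datum is admissible because $v$ is bounded. Theorem~\ref{thm:comparison} gives $u_n\le v$, and letting $n\to\infty$ gives $\underline u\le v$. Part (iii) is the dual statement, and (iv) combines (ii) and (iii). For (v), integrability of $f$ makes $a$ and $b$ bounded on $[0,\infty)$. For (vi), take another exhaustion $(\Omega'_k)$. Each $\Omega_n$ lies in some $\Omega'_k$, since both are finite and exhaust $X$. The same monotonicity argument then shows that each approximant from one exhaustion lies below a later approximant from the other, so the two limits coincide.
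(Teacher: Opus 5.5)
Your proposal is correct and follows the paper's proof in all essentials: exhaustion with exterior datum $a(t)$, Peano plus the finite comparison principle for local existence, uniqueness and the barrier bounds, monotonicity in $n$ by comparison, dominated convergence (using only $\sum_y w(x,y)<\infty$) to pass to the limit, and comparison with $v|_{\Omega_n}$ for minimality. The differences are minor and valid. You get monotonicity by viewing $u_{n+1}|_{\Omega_n}$ as a solution on $\Omega_n$ with the larger exterior datum $U_{n+1}\ge a$, whereas the paper checks that $U_n$ extended by $a(t)$ is a subsolution on $\Omega_{n+1}$; you replace Dini's theorem by equi-Lipschitz bounds at every vertex plus dominated convergence in $y$, which is what gives continuity of $s\mapsto\Delta\Phi\underline u(s,x)$ and hence the $C^1$ conclusion; and you prove exhaustion-independence by interleaving the two exhaustions rather than via minimality. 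Your aside on quasi-monotonicity has the sign reversed, but it is never used.
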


\begin{remark}[The homogeneous case]
When \(f=0\), we use the abbreviations
\[
\underline u^{A}:=\underline u^{A,0}
\qquad \text{and} \qquad
\overline u^{B}:=\overline u^{B,0}.
\]
In this case, \(a= A\), \(b= B\), and
Theorem~\ref{thm:extremal-bounded-pointwise-solutions} yields global
bounded pointwise solutions of the homogeneous problem with initial datum
\(u_0\), satisfying
\[
A\le\underline u^{A}\le\overline u^{B}\le B
\qquad
\text{on }[0,\infty)\times X.
\]
\end{remark}

\begin{proof}
We divide the proof into several steps.

\medskip
\noindent\textbf{Step 1: Preliminary calculations.}
Since $f\in C([0,\infty);\ell^\infty(X))$ and the positive/negative-part maps are Lipschitz on $\ell^\infty(X)$, the maps $t\mapsto f^\pm(t,\cdot)$ are continuous with values in $\ell^\infty(X)$. Consequently, $a,b\in C^1([0,\infty))$ with
\[
a'(t)=-\|f^-(t,\cdot)\|_\infty
\qquad \text{and} \qquad
b'(t)=\|f^+(t,\cdot)\|_\infty.
\]
Moreover, $a(t)\le A\le0\le B\le b(t)$ and, since $\phi$ is monotone increasing with $\phi(0)=0$,
\[
\phi(a(t))\le0\le\phi(b(t))
\qquad
\text{for every }t\ge0.
\]
Viewing $a$ and $b$ as functions on $[0,\infty)\times X$ which are constant in the spatial variable, we therefore obtain, for every $x\in X$ and $t>0$,
\begin{equation}
\label{eq:lower-forced-barrier}
(\partial_t+\Delta\Phi)\,a(t,x)
=
-\|f^-(t,\cdot)\|_\infty
+
\frac{\kappa(x)}{\mu(x)}\phi(a(t))
\le
-\|f^-(t,\cdot)\|_\infty
\le f(t,x)
\end{equation}
and
\begin{equation}
\label{eq:upper-forced-barrier}
(\partial_t+\Delta\Phi)\,b(t,x)
=
\|f^+(t,\cdot)\|_\infty
+
\frac{\kappa(x)}{\mu(x)}\phi(b(t))
\ge
\|f^+(t,\cdot)\|_\infty
\ge f(t,x).
\end{equation}
Thus, $a$ and $b$ are, respectively, lower and upper comparison functions for the \eqref{eq:GPME}. Since the extension of a spatially constant function by its own value is constant on $X$, the same computation shows that, for every finite $\Omega\subseteq X$,
\[
\left(\partial_t+\Deltadir[\Omega][a(t)]\Phi\right)a\le f
\qquad\text{and}\qquad
\left(\partial_t+\Deltadir[\Omega][b(t)]\Phi\right)b\ge f
\qquad
\text{on }(0,\infty)\times\Omega.
\]
For every finite $\Omega\subseteq X$ and $t\ge0$, the spatially constant
exterior data $a(t)$ and $b(t)$ are $(\phi,\Omega)$-admissible by the local
summability of $w$.

\medskip
\noindent\textbf{Step 2: Existence and uniqueness on finite graphs.}
If \(X\) is finite, the \eqref{eq:C-D} is a finite-dimensional nonautonomous ODE whose vector field is jointly continuous in $(t,u)$. Since \(\phi\) is merely continuous and need not be locally Lipschitz, Peano's existence theorem
\cite[Theorem~1.1.2]{LakshmikanthamLeela1969}
yields a local \(C^1\)-solution.
Comparison with \(a\) and \(b\) on the finite domain \(\Omega=X\), Corollary~\ref{c:comparison}, using \eqref{eq:lower-forced-barrier} and \eqref{eq:upper-forced-barrier}, gives the bounds \(a(t)\le u\le b(t)\) on every compact subinterval of the interval of existence, together with uniqueness. In particular, the solution remains in a compact subset of \(\R^X\) on every finite time interval, so the finite-dimensional continuation theorem
\cite[Theorem~1.1.3]{LakshmikanthamLeela1969} extends it to
\([0,\infty)\). By uniqueness, this solution is both the minimal and the maximal solution in the classes stated in the theorem. Hence, in the rest of the proof, we assume that \(X\) is infinite.

\medskip
\noindent\textbf{Step 3: Basic properties of solutions on the exhaustion.}
We construct the lower extremal solution; the upper extremal solution is treated in Step~10. The construction is carried out directly on \([0,\infty)\): the barrier bounds below show that every finite-domain solution remains bounded on each finite time interval and therefore cannot blow up in finite time. Properties on \([0,\infty)\) are then obtained by applying the finite-interval comparison results on \([0,T]\) for arbitrary \(T>0\).

Let
\[
\Omega_1\subset\Omega_2\subset\cdots\subset X,
\qquad
\bigcup_{n=1}^{\infty}\Omega_n=X,
\]
be an exhaustion by finite connected subsets given by Lemma~\ref{lem:exhaustion}.  For each \(n\), we solve the finite problem with time-dependent exterior boundary value \(a(t)\).  More precisely, for a function
\[
u_n^{A,f} \colon [0,\infty)\times\Omega_n\to\R
\]
we denote by
\[
U_n^{A,f}(t,\cdot):=E_{\Omega_n}^{a(t)}u_n^{A,f}(t,\cdot)
\]
its extension to \(X\), namely,
\[
U_n^{A,f}(t,x)
=
\begin{cases}
 u_n^{A,f}(t,x)  & x\in\Omega_n\\
 a(t) & x\in X\setminus\Omega_n.
\end{cases}
\]
We consider
\begin{equation}
\label{eq:finite-forced-lower-problem}
\begin{cases}
\left(\partial_t +\Deltadir[n][a(t)]\Phi \right) u_n^{A,f}(t,x)=f(t,x)
& x\in\Omega_n,\ t\in(0,\infty)\\[2mm]
u_n^{A,f}(0,x)=u_0(x)
& x\in\Omega_n.
\end{cases}
\end{equation}
By definition, \(U_n^{A,f}(t,y)=a(t)\) for
\(y\in X\setminus\Omega_n\). Moreover, by the definition of
\(\Deltadir[n][a(t)]\), the global extension satisfies
\[
\left(\partial_t+\Delta\Phi\right)U_n^{A,f}(t,x)=f(t,x)
\qquad
\text{for }x\in\Omega_n,\ t\in(0,\infty).
\]
Equivalently, for \(x\in\Omega_n\),
\[
\partial_tu_n^{A,f}(t,x)
+
\frac1{\mu(x)}\sum_{y\in X}w(x,y)
\bigl(\phi(U_n^{A,f}(t,x))-\phi(U_n^{A,f}(t,y))\bigr)
+
\frac{\kappa(x)}{\mu(x)}\phi(U_n^{A,f}(t,x))
=f(t,x).
\]
For each fixed \(t\), the sum is absolutely convergent by the local summability of \(w\)
since $a(t)$ is constant in space. Moreover, the resulting finite-dimensional vector field is jointly continuous in \(t\) and in the unknown vector: the sum over \(y\in\Omega_n\) is finite, while the contribution of \(y\in X\setminus\Omega_n\) equals
\[
\bigl(\phi(u_n^{A,f}(t,x))-\phi(a(t))\bigr)
\sum_{y\in X\setminus\Omega_n}\frac{w(x,y)}{\mu(x)}
\]
with a finite coefficient and a continuous time dependence. As above, since \(\phi\) need not be locally Lipschitz, Peano's existence theorem gives a local \(C^1\)-solution. Uniqueness on every common interval of existence follows from the finite comparison principle, Corollary~\ref{c:comparison}. The barrier bounds established in Step~4 below rule out finite-time blow-up, so \(u_n^{A,f}\) is defined on all of \([0,\infty)\).

\medskip
\noindent\textbf{Step 4: Barrier bounds.}
We claim that
\begin{equation}
\label{eq:finite-forced-lower-bounds}
a(t)\le U_n^{A,f}(t,x)\le b(t)
\qquad
\text{for every }(t,x)\in[0,\infty)\times X.
\end{equation}
Outside of \(\Omega_n\), this is immediate since \(U_n^{A,f}=a\le b\) there.
On \(\Omega_n\), we apply the comparison principle on \([0,T]\), for
arbitrary \(T>0\) within the interval of existence of \(u_n^{A,f}\). As noted
above, the constant exterior data $a(t)$ and $b(t)$ are
$(\phi,\Omega_n)$-admissible. For the lower bound, \(a(0)=A\le u_0\) on
\(\Omega_n\), the exterior boundary values of \(a\) and of \(u_n^{A,f}\)
coincide, and, as calculated in \eqref{eq:lower-forced-barrier},
\[
\left(\partial_t+\Deltadir[n][a(t)]\Phi\right)a
\le f
=
\left(\partial_t+\Deltadir[n][a(t)]\Phi\right)u_n^{A,f}
\qquad \text{on }(0,T)\times\Omega_n.
\]
Corollary~\ref{c:comparison} gives \(a\le u_n^{A,f}\) on \([0,T]\times\Omega_n\). For the upper bound, \(u_0\le B=b(0)\) on \(\Omega_n\), the exterior boundary values satisfy \(a(t)\le b(t)\), and,
as calculated in \eqref{eq:upper-forced-barrier},
\[
\left(\partial_t+\Deltadir[n][a(t)]\Phi\right)u_n^{A,f}
=
f
\le
\left(\partial_t+\Deltadir[n][b(t)]\Phi\right)b
\qquad \text{on }(0,T)\times\Omega_n.
\]
Corollary~\ref{c:comparison} gives \(u_n^{A,f}\le b\) on \([0,T]\times\Omega_n\).

For every fixed \(T>0\), the functions \(a\) and \(b\) are bounded on \([0,T]\). Consequently, \(u_n^{A,f}\) remains in a compact subset of \(\R^{\Omega_n}\) on every compact time interval and the finite-dimensional continuation theorem rules out finite-time blow-up. Hence, \(u_n^{A,f}\) exists on \([0,\infty)\) and \eqref{eq:finite-forced-lower-bounds} holds.

\medskip
\noindent\textbf{Step 5: Monotonicity along the exhaustion.}
We prove that
\begin{equation}
\label{eq:forced-lower-exhaustion-monotonicity}
U_n^{A,f}\le U_{n+1}^{A,f}
\qquad
\text{on }[0,\infty)\times X.
\end{equation}
View \(U_n^{A,f}|_{\Omega_{n+1}}\) as a function on \(\Omega_{n+1}\): its extension from \(\Omega_{n+1}\) with exterior value \(a(t)\) is \(U_n^{A,f}\). On \(\Omega_n\), it satisfies the equation in \eqref{eq:finite-forced-lower-problem} exactly. Let
\[
x\in\Omega_{n+1}\setminus\Omega_n.
\]
Then,
\[
U_n^{A,f}(t,x)=a(t),
\qquad
\partial_tU_n^{A,f}(t,x)=a'(t)=-\|f^-(t,\cdot)\|_\infty.
\]
Moreover,
\[
\Deltadir[n+1][a(t)]\Phi\bigl(U_n^{A,f}|_{\Omega_{n+1}}\bigr)(t,x)
=
\frac1{\mu(x)}
\sum_{y\in X}w(x,y)
\bigl(\phi(a(t))-\phi(U_n^{A,f}(t,y))\bigr)
+
\frac{\kappa(x)}{\mu(x)}\phi(a(t)).
\]
By \eqref{eq:finite-forced-lower-bounds}, \(U_n^{A,f}(t,y)\ge a(t)\) for every \(y\in X\). Since \(\phi\) is monotone increasing, every edge contribution in the preceding display is nonpositive, and the killing contribution is nonpositive because \(\phi(a(t))\le0\). It follows that
\[
\left(\partial_t+\Deltadir[n+1][a(t)]\Phi\right)\bigl(U_n^{A,f}|_{\Omega_{n+1}}\bigr)
\le
-\|f^-(t,\cdot)\|_\infty
\le
f
=
\left(\partial_t+\Deltadir[n+1][a(t)]\Phi\right)u_{n+1}^{A,f}
\]
on \((0,\infty)\times\Omega_{n+1}\).

At time \(t=0\), one has
\[
U_n^{A,f}(0,x)
=
\begin{cases}
u_0(x) & x\in\Omega_n\\
A & x\in\Omega_{n+1}\setminus\Omega_n
\end{cases}
\]
and, hence, \(U_n^{A,f}(0,\cdot)\le u_0=u_{n+1}^{A,f}(0,\cdot)\) on
\(\Omega_{n+1}\) because \(A\le u_0\). The two functions have the same
exterior value \(a(t)\) outside \(\Omega_{n+1}\), which is
\((\phi,\Omega_{n+1})\)-admissible for every \(t\ge0\).
Corollary~\ref{c:comparison}, applied on \([0,T]\) for arbitrary \(T>0\), gives
\[
U_n^{A,f}\le u_{n+1}^{A,f}
\qquad
\text{on }[0,\infty)\times\Omega_{n+1}.
\]
After extension by \(a(t)\), this proves \eqref{eq:forced-lower-exhaustion-monotonicity} on all of \(X\).

Consequently, the sequence \((U_n^{A,f})_n\) is pointwise increasing and
\[
a(t)\le U_1^{A,f}(t,x)\le U_2^{A,f}(t,x)\le\cdots\le b(t).
\]
We may therefore define
\begin{equation}
\label{eq:def-forced-lower-extremal-limit}
\underline u^{A,f}(t,x):=\lim_{n\to\infty}U_n^{A,f}(t,x),
\qquad
(t,x)\in[0,\infty)\times X.
\end{equation}
It follows immediately that
\[
a(t)\le \underline u^{A,f}(t,x)\le b(t)
\qquad
\text{for every }(t,x)\in[0,\infty)\times X.
\]

\medskip
\noindent\textbf{Step 6: Time regularity.}
Fix \(x\in X\) and \(T>0\).  For all sufficiently large \(n\), one has \(x\in\Omega_n\).  Set
\[
M_T:=\sup_{t\in[0,T]}\|f(t,\cdot)\|_\infty<\infty
\qquad \text{and} \qquad
K_T:=\max_{r\in[a(T),b(T)]}|\phi(r)|<\infty.
\]
Since \(a\) is nonincreasing and \(b\) is nondecreasing, \eqref{eq:finite-forced-lower-bounds} gives
\[
a(T)\le U_n^{A,f}(t,y)\le b(T)
\qquad
\text{for every }t\in[0,T],\ y\in X \text{ and } n.
\]
Thus, for all sufficiently large \(n\),
\[
\begin{aligned}
|\partial_t U_n^{A,f}(t,x)|
&=|f(t,x)-\Delta\Phi U_n^{A,f}(t,x)|\\
&\le
M_T
+
\frac1{\mu(x)}\sum_{y\in X}w(x,y)
\left|\phi(U_n^{A,f}(t,x))-\phi(U_n^{A,f}(t,y))\right|
+
\frac{\kappa(x)}{\mu(x)}|\phi(U_n^{A,f}(t,x))|\\
&\le
M_T
+
\frac{2K_T}{\mu(x)}\sum_{y\in X}w(x,y)
+
\frac{\kappa(x)}{\mu(x)}K_T
=:
C_{x,T}.
\end{aligned}
\]
The constant \(C_{x,T}\) is independent of \(n\) and \(t\).  Hence,
\[
|U_n^{A,f}(t,x)-U_n^{A,f}(s,x)|\le C_{x,T}|t-s|
\qquad
\text{for all }s,t\in[0,T].
\]
Passing to the pointwise limit gives
\[
|\underline u^{A,f}(t,x)-\underline u^{A,f}(s,x)|\le C_{x,T}|t-s|.
\]
Thus, \(t\mapsto \underline u^{A,f}(t,x)\) is Lipschitz, hence continuous, on \([0,T]\).

For fixed \(x\), the sequence \(U_n^{A,f}(\cdot,x)\) is increasing in \(n\), consists of continuous functions on \([0,T]\), and converges pointwise to the continuous function \(\underline u^{A,f}(\cdot,x)\).  By Dini's theorem,
\[
U_n^{A,f}(\cdot,x)\to \underline u^{A,f}(\cdot,x)
\qquad
\text{uniformly on }[0,T].
\]
Since \(\phi\) is uniformly continuous on \([a(T),b(T)]\), it follows that
\begin{equation}
\label{eq:uniform-forced-Phi-convergence}
\phi(U_n^{A,f}(\cdot,x))
\to
\phi(\underline u^{A,f}(\cdot,x))
\qquad
\text{uniformly on }[0,T]
\end{equation}
for every fixed \(x\in X\) and every \(T>0\).

\medskip
\noindent\textbf{Step 7: Passage to the limit.}
Fix \(x\in X\) and \(T>0\). Since
\[
a(T)\le\underline u^{A,f}(t,y)\le b(T)
\qquad
\text{for every }(t,y)\in[0,T]\times X,
\]
the function \(\Phi\underline u^{A,f}(t,\cdot)\) is bounded for every \(t\in[0,T]\) and the local summability of \(w\) implies
\[
\underline u^{A,f}(t,\cdot)\in \mathcal{F}_\Phi
\qquad
\text{for every }t\in[0,T].
\]
For all sufficiently large \(n\), one has \(x\in\Omega_n\) and
\begin{equation}
\label{eq:finite-forced-integral-identity}
U_n^{A,f}(t,x)-u_0(x)
=
\int_0^t\bigl(f(s,x)-\Delta\Phi U_n^{A,f}(s,x)\bigr)\dd s.
\end{equation}
We claim that
\begin{equation}
\label{eq:forced-laplacian-uniform-convergence}
\Delta\Phi U_n^{A,f}(\cdot,x)
\to
\Delta\Phi\underline u^{A,f}(\cdot,x)
\qquad
\text{uniformly on }[0,T].
\end{equation}
Indeed,
\[
\begin{aligned}
\sup_{t\in[0,T]}
\left|
\Delta\Phi U_n^{A,f}(t,x)-\Delta\Phi\underline u^{A,f}(t,x)
\right| \le
&\left(\frac{1}{\mu(x)}\sum_{y\in X}  w(x,y)+\frac{\kappa(x)}{\mu(x)} \right)
\sup_{t\in[0,T]}
\left|
\phi(U_n^{A,f}(t,x))-\phi(\underline u^{A,f}(t,x))
\right|\\
&\qquad+
\frac1{\mu(x)}\sum_{y\in X}w(x,y)
\sup_{t\in[0,T]}
\left|
\phi(U_n^{A,f}(t,y))-\phi(\underline u^{A,f}(t,y))
\right|.
\end{aligned}
\]
For every fixed \(y\), the corresponding supremum tends to zero by \eqref{eq:uniform-forced-Phi-convergence}.  Moreover, it is bounded by \(2K_T\), and
\[
\sum_{y\in X}w(x,y)<\infty.
\]
Thus, dominated convergence gives \eqref{eq:forced-laplacian-uniform-convergence}.

For every fixed \(n\) with \(x\in\Omega_n\), the map \(t\mapsto\Delta\Phi U_n^{A,f}(t,x)\) is continuous: the contribution of \(y\in\Omega_n\) is a finite sum of continuous functions, while the contribution of \(y\in X\setminus\Omega_n\) equals
\[
\bigl(\phi(u_n^{A,f}(t,x))-\phi(a(t))\bigr)
\sum_{y\in X\setminus\Omega_n}\frac{w(x,y)}{\mu(x)}
\]
which is continuous in \(t\). By \eqref{eq:forced-laplacian-uniform-convergence}, the map \(t\mapsto\Delta\Phi\underline u^{A,f}(t,x)\) is continuous on \([0,T]\) as well.

Passing to the limit in \eqref{eq:finite-forced-integral-identity}, we obtain
\[
\underline u^{A,f}(t,x)-u_0(x)
=
\int_0^t\bigl(f(s,x)-\Delta\Phi\underline u^{A,f}(s,x)\bigr)\dd s.
\]
Since \(s\mapsto f(s,x)\) and \(s\mapsto\Delta\Phi\underline u^{A,f}(s,x)\) are continuous, the fundamental theorem of calculus yields
\[
\underline u^{A,f}(\cdot,x)\in C^1((0,T))
\qquad \text{and} \qquad
(\partial_t +\Delta\Phi)\underline u^{A,f}(t,x)=f(t,x)
\]
for every \(t\in(0,T)\) and \(x\in X\).  Since \(T>0\) was arbitrary, \(\underline u^{A,f}\) is a global pointwise solution of the \eqref{eq:C-D}, bounded on every compact time interval by \eqref{eq:finite-forced-lower-bounds}.

\medskip
\noindent\textbf{Step 8: Minimality.}
Let \(T>0\) and let \(v\colon[0,T]\times X\to\R\) be a bounded pointwise solution of the \eqref{eq:C-D} with the same data \((u_0,f)\) satisfying
\[
v(t,x)\ge a(t)
\qquad
\text{for every }(t,x)\in[0,T]\times X.
\]
Fix \(n\).  On \(\Omega_n\), the function \(u_n^{A,f}\) solves the finite problem with exterior boundary value \(a(t)\), while the restriction of \(v\) to \(\Omega_n\) solves the same equation with exterior boundary values
\[
v(t,y),
\qquad y\in X\setminus\Omega_n.
\]
Since \(v\) is bounded on \([0,T]\times X\), \(\phi\) is bounded on bounded intervals and \(w\) is locally summable, these exterior data are \((\phi,\Omega_n)\)-admissible.

The initial data and the forcing terms agree, while the exterior boundary data are ordered:
\[
a(t)\le v(t,y)
\qquad
\text{for every }t\in[0,T] \text{ and } y\in X\setminus\Omega_n.
\]
Corollary~\ref{c:comparison} gives
\[
u_n^{A,f}\le v
\qquad
\text{on }[0,T]\times\Omega_n.
\]
Letting \(n\to\infty\), we obtain \(\underline u^{A,f}\le v\) on \([0,T]\times X\). This proves the minimality property.

\medskip
\noindent\textbf{Step 9: Independence of the exhaustion.}
Let \((\Omega_n)\) and \((\Lambda_k)\) be two exhaustions by finite and connected sets and let
\[
\underline u^{A,f,\Omega},
\qquad
\underline u^{A,f,\Lambda}
\]
be the corresponding limits.  Both are global pointwise solutions bounded on every compact
time interval, both lie above \(a\), and both satisfy the minimality property of Step~8 on every \([0,T]\times X\).  Therefore,
\[
\underline u^{A,f,\Omega}\le \underline u^{A,f,\Lambda}
\qquad \text{and} \qquad
\underline u^{A,f,\Lambda}\le \underline u^{A,f,\Omega},
\]
that is, the two limits coincide. Hence, the lower extremal solution is independent of the chosen exhaustion.

\medskip
\noindent\textbf{Step 10: The upper extremal solution.}
For each \(n\), consider the finite problem with time-dependent exterior boundary value \(b(t)\):
\[
\begin{cases}
\left(\partial_t +\Deltadir[n][b(t)]\Phi \right) u_n^{B,f}(t,x)=f(t,x)
& x\in\Omega_n,\ t\in(0,\infty)\\[2mm]
u_n^{B,f}(0,x)=u_0(x)
& x\in\Omega_n,
\end{cases}
\]
and set \(U_n^{B,f}(t,\cdot):=E_{\Omega_n}^{b(t)}u_n^{B,f}(t,\cdot)\).  As before, Peano's theorem combined with comparison against \(a\) and \(b\) yields a solution defined on all of \([0,\infty)\) which satisfies
\[
a(t)\le U_n^{B,f}(t,x)\le b(t)
\qquad
\text{for every }(t,x)\in[0,\infty)\times X.
\]

We claim that the sequence is now pointwise decreasing:
\[
U_{n+1}^{B,f}\le U_n^{B,f}
\qquad
\text{on }[0,\infty)\times X.
\]
View \(U_n^{B,f}|_{\Omega_{n+1}}\) as a function on \(\Omega_{n+1}\).  On \(\Omega_n\), it satisfies the equation exactly.  If \(x\in\Omega_{n+1}\setminus\Omega_n\), then
\[
U_n^{B,f}(t,x)=b(t),
\qquad
\partial_tU_n^{B,f}(t,x)=b'(t)=\|f^+(t,\cdot)\|_\infty,
\]
and
\[
\Deltadir[n+1][b(t)]\Phi\bigl(U_n^{B,f}|_{\Omega_{n+1}}\bigr)(t,x)
=
\frac1{\mu(x)}
\sum_{y\in X}w(x,y)
\bigl(\phi(b(t))-\phi(U_n^{B,f}(t,y))\bigr)
+
\frac{\kappa(x)}{\mu(x)}\phi(b(t))
\ge0
\]
because \(U_n^{B,f}(t,y)\le b(t)\) for every \(y\in X\), \(\phi\) is monotone increasing and \(\phi(b(t))\ge0\).  Therefore,
\[
\left(\partial_t+\Deltadir[n+1][b(t)]\Phi\right)\bigl(U_n^{B,f}|_{\Omega_{n+1}}\bigr)
\ge
\|f^+(t,\cdot)\|_\infty
\ge
f
=
\left(\partial_t+\Deltadir[n+1][b(t)]\Phi\right)u_{n+1}^{B,f}
\]
on \((0,\infty)\times\Omega_{n+1}\).  At \(t=0\), one has \(u_{n+1}^{B,f}(0,\cdot)=u_0\le U_n^{B,f}(0,\cdot)\) on \(\Omega_{n+1}\) because \(u_0\le B=b(0)\), and the two functions have the same exterior value \(b(t)\) outside \(\Omega_{n+1}\).  Corollary~\ref{c:comparison}, applied on \([0,T]\) for arbitrary \(T>0\), gives the claim.

Hence, the pointwise limit
\[
\overline u^{B,f}(t,x):=\lim_{n\to\infty}U_n^{B,f}(t,x)
\]
exists and satisfies \(a(t)\le\overline u^{B,f}(t,x)\le b(t)\) on \([0,\infty)\times X\).  The estimate of Step~6 applies verbatim to \(U_n^{B,f}\), so \(\overline u^{B,f}(\cdot,x)\) is Lipschitz on \([0,T]\) for every \(x\in X\) and \(T>0\); Dini's theorem applies to the decreasing sequence \(U_n^{B,f}(\cdot,x)\), and the argument of Step~7 shows that \(\overline u^{B,f}\) is a global pointwise solution of the \eqref{eq:C-D}, bounded on every compact time interval.

For the maximality, let \(T>0\) and let \(v\) be a bounded pointwise solution
on \([0,T]\times X\) with the same data \((u_0,f)\) satisfying \(v\le b\).
By the boundedness of \(v\), the exterior datum
\(v(t,\cdot)|_{X\setminus\Omega_n}\) is
\((\phi,\Omega_n)\)-admissible for every \(t\in[0,T]\); it is also bounded
above by \(b(t)\). Hence, Corollary~\ref{c:comparison} gives
\[
v\le u_n^{B,f}
\qquad
\text{on }[0,T]\times\Omega_n.
\]
Letting \(n\to\infty\) yields \(v\le\overline u^{B,f}\) on \([0,T]\times X\).  As in Step~9, maximality also shows that \(\overline u^{B,f}\) is independent of the chosen exhaustion.

Finally, for every \(n\), the lower and upper finite-domain solutions share the same forcing and initial datum, while their exterior boundary values satisfy \(a(t)\le b(t)\).  Hence, Corollary~\ref{c:comparison} gives
\[
U_n^{A,f}\le U_n^{B,f}
\qquad
\text{on }[0,\infty)\times X
\]
and, passing to the limits, yields \(\underline u^{A,f}\le\overline u^{B,f}\).  Together with Step~4 and the bounds above, this proves the barrier bounds \eqref{eq:forced-extremal-bounds}. The trapping property follows immediately by combining minimality and maximality.

Assume now \eqref{eq:integrable-forcing-infinity}, i.e., the integrability
of the forcing term.  Then, for every \(t\ge0\),
\[
0\le b(t)-B\le\int_0^\infty\|f(s,\cdot)\|_\infty\dd s
\qquad \text{and} \qquad
0\le A-a(t)\le\int_0^\infty\|f(s,\cdot)\|_\infty\dd s,
\]
so \(a\) and \(b\) are bounded on \([0,\infty)\).  By \eqref{eq:forced-extremal-bounds}, the two extremal solutions are bounded on \([0,\infty)\times X\), and hence are global bounded pointwise solutions.
\end{proof}

The next remark, which answers a natural question, shows in which sense the barriers of Theorem~\ref{thm:extremal-bounded-pointwise-solutions} are optimal: they can always be anchored at the natural levels of the initial datum on one side, but not on both sides simultaneously, unless the constants are chosen canonically.

\begin{remark}[Sharper barriers and their optimality]
\label{rem:sharpened-barriers}
Set
\[
A_0:=\min\{0,\inf_Xu_0\}
\qquad \text{and} \qquad
B_0:=\max\{0,\sup_Xu_0\},
\]
the largest permitted lower-barrier level and the smallest permitted
upper-barrier level, respectively, in
Theorem~\ref{thm:extremal-bounded-pointwise-solutions}. Let
\[
a_0(t):=A_0-\int_0^t\|f^-(s,\cdot)\|_\infty\dd s
\qquad \text{and} \qquad
b_0(t):=B_0+\int_0^t\|f^+(s,\cdot)\|_\infty\dd s
\]
be the associated barrier functions, so that \(a\le a_0\le0\le b_0\le b\) for general \(A\le A_0\) and \(B\ge B_0\).
\begin{enumerate}[(1)]
\item \textup{(One-sided sharpening)} For every \(A\le A_0\) and \(B\ge B_0\), the extremal solutions satisfy the sharper bounds
\[
a(t)\le\underline u^{A,f}(t,x)\le b_0(t)
\qquad \text{and} \qquad
a_0(t)\le\overline u^{B,f}(t,x)\le b(t)
\]
on \([0,\infty)\times X\). Indeed, in Step~4 of the proof, the comparison with the upper barrier only uses \(u_0\le b(0)\) and the ordering between the exterior values; both remain valid with \(b_0\) in place of \(b\), since \(u_0\le B_0=b_0(0)\) and \(a\le0\le b_0\). Hence, \(u_n^{A,f}\le b_0\) for every \(n\) and, in the limit, \(\underline u^{A,f}\le b_0\). The bound for \(\overline u^{B,f}\) is dual.

\item \textup{(Failure of the two-sided sharpening)} In general, the dual bounds \(a_0\le\underline u^{A,f}\) and \(\overline u^{B,f}\le b_0\) are false when \(A<A_0\) or \(B>B_0\), already in the homogeneous case. Consider \(\phi=\operatorname{id}\), \(\kappa=0\), \(f=0\) and \(u_0=0\), so that \(A_0=B_0=0\), and take the exterior level \(A=-1\). Since \(\kappa=0\), the function \(V_n:=U_n^{A}+\mathbf1\) is the zero extension of \(v_n:=u_n^{A}+1\), and it solves the finite heat equation with zero exterior value and initial datum \(\mathbf1_{\Omega_n}\), that is,
\[
v_n(t)=e^{-t\Deltadir[n]}\mathbf1_{\Omega_n}.
\]
As \(n\to\infty\), these finite Dirichlet semigroups increase to the minimal heat semigroup \((P_t)_{t\ge0}\) of the graph, see \cite{Woj08} and \cite[Chapter~7]{KLW21}, whence
\[
\underline u^{A}(t,\cdot)=P_t\mathbf1-\mathbf1.
\]
If \(G\) is stochastically incomplete, that is, if \ref{SC} fails and \(P_t\mathbf1<\mathbf1\) somewhere -- see \cite{Woj08,Woj09,KL12} and \cite[Chapter~7]{KLW21} for large classes of such graphs -- then
\[
\underline u^{A}(t,x)<0=A_0
\]
for some \((t,x)\). Dually, for $B=1,$ \(\overline u^{B}=\mathbf1-P_t\mathbf1>0=B_0\) somewhere. Thus, the exterior condition ``at infinity'' imposed along the exhaustion can push the extremal solutions strictly outside the interval \([A_0,B_0]\) determined by the initial datum.
\item \textup{(Canonical box)} For the canonical choice \(A=A_0\) and \(B=B_0\), Theorem~\ref{thm:extremal-bounded-pointwise-solutions} yields
\[
a_0(t)
\le
\underline u^{A_0,f}(t,x)
\le
\overline u^{B_0,f}(t,x)
\le
b_0(t)
\qquad
\text{on }[0,\infty)\times X,
\]
which, for \(f\equiv0\), reads \(A_0\le\underline u^{A_0}\le\overline u^{B_0}\le B_0\).
\end{enumerate}
\end{remark}

We now derive some consequences of the preceding results. The first gives the existence
of a minimal positive solution whenever the initial datum and the forcing term are positive.

\begin{corollary}
\label{cor:canonical-minimal-nonnegative-solution}
Let $u_0\in\ell^\infty(X)$ with $u_0\ge0$ and let $f\in C([0,\infty);\ell^\infty(X))$ with $f\ge0$.
Then, the lower-extremal construction of
Theorem~\ref{thm:extremal-bounded-pointwise-solutions} with \(A=0\), and
hence with zero exterior datum \(a\equiv0\), produces a global positive
pointwise solution
$\underline u^{0,f}\colon[0,\infty)\times X\to\R$ of the \eqref{eq:C-D} with data $(u_0,f)$, bounded on every compact time interval, such that
\[
0\le \underline u^{0,f}(t,x)\le \|u_0\|_\infty+\int_0^t\|f(s,\cdot)\|_\infty\dd s
\qquad
\text{on }[0,\infty)\times X.
\]
Moreover, \(\underline u^{0,f}\) is minimal among all bounded nonnegative
pointwise solutions with the same data \((u_0,f)\) on every
\([0,T]\times X\), \(T>0\). In particular, for $f=0$,
\[
0\le \underline u^0(t,x)\le \|u_0\|_\infty
\qquad
\text{on }[0,\infty)\times X.
\]
\end{corollary}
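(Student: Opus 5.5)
The plan is to specialize Theorem~\ref{thm:extremal-bounded-pointwise-solutions} to the choice \(A=0\) and read off the claims, with one extra comparison step to get minimality in the larger class of nonnegative solutions. Since \(u_0\ge0\), we have \(\min\{0,\inf_Xu_0\}=0\), so \(A=0\) is admissible. Since \(f\ge0\), \(f^-\equiv0\), hence the lower barrier is \(a(t)=0-\int_0^t\|f^-(s,\cdot)\|_\infty\dd s\equiv0\). Thus the finite-domain problems \eqref{eq:finite-forced-lower-problem} carry zero exterior datum, and the theorem yields a global pointwise solution \(\underline u^{0,f}\) with data \((u_0,f)\), bounded on every compact time interval, satisfying \(a=0\le\underline u^{0,f}\). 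In particular the solution is positive.

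For the upper bound, I would use the one-sided sharpening of Remark~\ref{rem:sharpened-barriers}(1). It gives \(\underline u^{A,f}\le b_0\) with \(B_0=\max\{0,\sup_Xu_0\}=\|u_0\|_\infty\), because \(u_0\ge0\). Moreover \(f^+=f\), so
\[
b_0(t)=\|u_0\|_\infty+\int_0^t\|f(s,\cdot)\|_\infty\dd s.
\]
Alternatively, one can rerun Step~4 of the proof with \(b_0\) in place of \(b\): the comparison only needs \(u_0\le b_0(0)\) and the ordering of the exterior values \(0\le b_0(t)\), both of which hold.

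Minimality should come from part~(ii) of Theorem~\ref{thm:extremal-bounded-pointwise-solutions}. Let \(v\) be a bounded nonnegative pointwise solution on \([0,T]\times X\) with data \((u_0,f)\). Then \(v\ge0=a\), so part~(ii) gives \(\underline u^{0,f}\le v\) on \([0,T]\times X\).

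The homogeneous statement is the case \(f=0\), where the bound reduces to \(0\le\underline u^0\le\|u_0\|_\infty\).

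The only point needing care is the upper bound. If Remark~\ref{rem:sharpened-barriers} is regarded as informal, I would redo the Step~4 comparison explicitly via Corollary~\ref{c:comparison}. Here \(b_0\) is a supersolution, because \(b_0'=\|f\|_\infty\ge f\) and \(\kappa\phi(b_0)\ge0\), and the exterior data \(0\le b_0(t)\) are ordered and \((\phi,\Omega_n)\)-admissible, being constants. The bound then passes to the monotone limit \(n\to\infty\).
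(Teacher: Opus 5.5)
Your proposal is correct and is essentially the paper's proof: take $A=0$ in Theorem~\ref{thm:extremal-bounded-pointwise-solutions}, note that $a\equiv0$ because $f^-=0$, and use part~(ii) to get minimality among bounded nonnegative solutions. The only difference is cosmetic. The paper gets the upper bound directly from part~(i) by choosing $B=\|u_0\|_\infty$. This choice is admissible, it gives $b=b_0$, and $\underline u^{A,f}$ does not depend on $B$, so the detour through Remark~\ref{rem:sharpened-barriers}(1), or a rerun of Step~4, is unnecessary.
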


\begin{proof}
Since $u_0\ge0$ and $f\ge0$, one has $\min\{0,\inf_Xu_0\}=0$, $f^-=0$ and $\|f^+(s,\cdot)\|_\infty=\|f(s,\cdot)\|_\infty$. Apply Theorem~\ref{thm:extremal-bounded-pointwise-solutions} with
$A=0$ and $B=\|u_0\|_\infty$: the associated barrier functions are
\[
a=0
\qquad \text{and} \qquad
b(t)=\|u_0\|_\infty+\int_0^t\|f(s,\cdot)\|_\infty\dd s
\]
and the class of bounded pointwise solutions lying above $a$ is precisely the class of nonnegative ones.
\end{proof}

Next, we single out the extremal solutions associated with the canonical choice of the constants.

\begin{corollary}
\label{cor:canonical-extremal-box}
Let $u_0\in\ell^\infty(X)$, let $f\in C([0,\infty);\ell^\infty(X))$ and set
\[
A_0:=\min\{0,\inf_Xu_0\}
\qquad \text{and} \qquad
B_0:=\max\{0,\sup_Xu_0\}
\]
with associated barrier functions $a_0$ and $b_0$ as in Remark~\ref{rem:sharpened-barriers}. Then, the lower and upper extremal constructions yield global pointwise solutions
$\underline u^{A_0,f}, \overline u^{B_0,f}\colon[0,\infty)\times X\to\R$
of the \eqref{eq:C-D} with data $(u_0,f)$,
bounded on every compact time interval. These solutions may
coincide and satisfy
\[
a_0(t)\le \underline u^{A_0,f}(t,x)\le \overline u^{B_0,f}(t,x)\le b_0(t)
\qquad
\text{on }[0,\infty)\times X.
\]
Moreover, if \(T>0\) and \(v\) is any bounded pointwise solution on \([0,T]\times X\) with the same data \((u_0,f)\) satisfying
\[
a_0\le v\le b_0,
\]
then
\[
\underline u^{A_0,f}\le v\le \overline u^{B_0,f}
\qquad
\text{on }[0,T]\times X.
\]
In particular, for $f=0$, the extremal solutions $\underline u^{A_0}$ and $\overline u^{B_0}$ are global bounded pointwise solutions and satisfy
\[
A_0\le \underline u^{A_0}\le \overline u^{B_0}\le B_0
\qquad
\text{on }[0,\infty)\times X.
\]
\end{corollary}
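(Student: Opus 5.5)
This is a direct specialization of Theorem~\ref{thm:extremal-bounded-pointwise-solutions} to the admissible constants $A=A_0$ and $B=B_0$. By definition $A_0\le\min\{0,\inf_Xu_0\}$ and $B_0\ge\max\{0,\sup_Xu_0\}$ hold with equality, so the theorem applies. With these constants the barrier functions \eqref{eq:barriers} are exactly $a=a_0$ and $b=b_0$.

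The theorem then supplies global pointwise solutions $\underline u^{A_0,f}$ and $\overline u^{B_0,f}$ with data $(u_0,f)$. They are bounded on every $[0,T]\times X$, and item (i) gives
\[
a_0\le\underline u^{A_0,f}\le\overline u^{B_0,f}\le b_0
\qquad\text{on }[0,\infty)\times X.
\]
This is also item (3) of Remark~\ref{rem:sharpened-barriers}. For the trapping statement, take a bounded pointwise solution $v$ on $[0,T]\times X$ with $a_0\le v\le b_0$. Item (iv) of the theorem with $a=a_0$ and $b=b_0$ gives $\underline u^{A_0,f}\le v\le\overline u^{B_0,f}$ directly. The phrase ``may coincide'' makes no claim beyond $\underline u^{A_0,f}\le \overline u^{B_0,f}$. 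If one wants to justify it, I would use the finite-graph case of Step~2 of the proof, where uniqueness forces the two solutions to be equal.

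For $f=0$, the barriers are the constants $a_0=A_0$ and $b_0=B_0$. The integrability condition \eqref{eq:integrable-forcing-infinity} holds trivially, so item (v) shows that both extremal solutions are global bounded pointwise solutions. The displayed bound $A_0\le\underline u^{A_0}\le\overline u^{B_0}\le B_0$ is then item (i) again.

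No step is a real obstacle. The only thing to check is that the canonical constants satisfy the theorem's hypotheses, in particular $A_0\le 0\le B_0$, which holds by construction.
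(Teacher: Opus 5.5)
Your proposal is correct and matches the paper's proof, which is the one-line specialization of Theorem~\ref{thm:extremal-bounded-pointwise-solutions} to $A=A_0$ and $B=B_0$. Your item-by-item bookkeeping (barrier bounds from (i), trapping from (iv), global boundedness for $f=0$ from (v) or directly from the constant barriers) is exactly what that one-line argument leaves implicit.
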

\begin{proof}
    This is a direct consequence of Theorem~\ref{thm:extremal-bounded-pointwise-solutions}
    with $A=A_0$ and $B=B_0$.
\end{proof}

Having explored pointwise solutions, 
we now give a criterion for when a bounded pointwise solution is a classical
solution.

\begin{lemma}\label{l:classical-solution}
Let $G$ satisfy $\BD$, let $f\in C([0,T];\ell^\infty(X))$ and let $u$ be a bounded pointwise solution of the \eqref{eq:C-D} with forcing term $f$ on $[0,T]\times X$. Then, $u$ is an $\ell^\infty$-classical solution of the \eqref{eq:C-D}.
If, in addition, $G$ satisfies $\FM$, then $u$ is an
$\ell^p$-classical solution of the \eqref{eq:C-D} for every $p\in[1,\infty]$.
\end{lemma}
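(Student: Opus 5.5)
The plan is to show directly that \(t\mapsto u(t)\) is \(C^1\) into \(\ell^\infty(X)\), with derivative \(f(t)-\Delta\Phi u(t)\). Let \(M:=\sup_{[0,T]\times X}|u|<\infty\) and \(D:=\sup_X\Deg<\infty\). Since \(\phi\) is continuous, \(K:=\max_{[-M,M]}|\phi|<\infty\), and \(\phi\) is uniformly continuous on \([-M,M]\) with some modulus \(\omega\). For every \(t\) and \(x\),
\[
|\Delta\Phi u(t,x)|\le \frac{1}{\mu(x)}\sum_{y}w(x,y)\bigl(|\phi(u(t,x))|+|\phi(u(t,y))|\bigr)+\frac{\kappa(x)}{\mu(x)}|\phi(u(t,x))|\le 2DK .
\]
Hence \(\Delta\Phi u(t)\in\ell^\infty(X)\) and \(u(t)\in\dom(\mathcal L^{(\infty)})\). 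The same computation with differences gives
\[
\|\Delta\Phi u(t)-\Delta\Phi u(s)\|_\infty\le 2D\,\omega\bigl(\|u(t)-u(s)\|_\infty\bigr).
\]
Thus \(\ell^\infty\)-continuity of \(u\) implies \(\ell^\infty\)-continuity of \(t\mapsto\Delta\Phi u(t)\).

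To obtain \(\ell^\infty\)-continuity of \(u\), use the pointwise equation: \(|\partial_t u(t,x)|\le \|f(t)\|_\infty+2DK\le C\) uniformly in \(x\) and \(t\in(0,T)\). Since each \(u(\cdot,x)\) is continuous on \([0,T]\), the mean value theorem gives \(|u(t,x)-u(s,x)|\le C|t-s|\) for all \(s,t\in[0,T]\). So \(u\) is Lipschitz into \(\ell^\infty\), including at the endpoints, which gives \(u\in C([0,T];\ell^\infty)\) with \(u(0)=u_0\).

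Now set \(g(t):=f(t)-\Delta\Phi u(t)\in C([0,T];\ell^\infty)\). Pointwise, \(u(t,x)-u(s,x)=\int_s^t g(r,x)\,dr\), so
\[
\Bigl\|\frac{u(t+h)-u(t)}{h}-g(t)\Bigr\|_\infty\le \sup_{|r-t|\le|h|}\|g(r)-g(t)\|_\infty\to0 .
\]
Hence \(u\in C^1((0,T);\ell^\infty)\) with \(u'=g\), and \(u\) is \(\ell^\infty\)-classical. The main obstacle is this uniformity: pointwise differentiability alone is not enough, and the uniform continuity of \(\phi\) on the bounded range together with \BD{} is exactly what makes the estimates uniform in \(x\).

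For \FM, use \(\|h\|_p\le \mu(X)^{1/p}\|h\|_\infty\). Then \(\ell^\infty\)-continuity and differentiability transfer to \(\ell^p\), and \(u(t),\Delta\Phi u(t)\in\ell^p\). This requires \(f\in C([0,T];\ell^p)\), which follows from the same inequality. The remaining step is to check that the \(\ell^p\)-derivative coincides with the \(\ell^\infty\)-derivative, which holds by the same norm comparison.
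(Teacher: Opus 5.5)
Your proposal is correct and follows essentially the same route as the paper. The bound $\|\Delta\Phi u(t)\|_\infty\le 2DK$ from \BD{} makes $u$ Lipschitz into $\ell^\infty(X)$. Uniform continuity of $\phi$ on the bounded range then makes $t\mapsto\Delta\Phi u(t)$ continuous in $\ell^\infty(X)$, the pointwise integral identity upgrades to $\ell^\infty$-differentiability, and \FM{} transfers everything to $\ell^p$ via $\|h\|_p\le\mu(X)^{1/p}\|h\|_\infty$. The only difference is that you verify the $\ell^\infty$ operator bound and the Banach-space fundamental theorem of calculus by hand, where the paper cites them.
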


\begin{proof}
Let $R:=\sup_{(t,x)\in[0,T]\times X}|u(t,x)|$ and
$K:=\sup_{|s|\le R}|\phi(s)|$. Since we assume $\BD$, 
we let $D=\sup_{x \in X}\Deg(x)$. As a consequence of $\BD$,
$\Delta$ is bounded
on $\ell^\infty(X)$ with operator norm bounded by $2D$, see
\cite[Theorem~2.15]{KLW21}. Therefore,
\[
  \|\Delta\Phi u(t,\cdot)\|_\infty
  \le
  2D\|\Phi u(t,\cdot)\|_\infty
  \le
  2DK.
\]
The \eqref{eq:C-D} gives
\[
u(t,x)-u(s,x)
=
\int_s^t\bigl(f(r,x)-\Delta\Phi u(r,x)\bigr)\dd r
\]
for every $x\in X$. Since $\sup_{r\in[0,T]}\|f(r,\cdot)\|_\infty<\infty$, the map $t\mapsto u(t,\cdot)$ is Lipschitz in
$\ell^\infty(X)$. Since $\phi$ is uniformly continuous on $[-R,R]$ and
$\Delta \colon \ell^\infty(X)\to\ell^\infty(X)$ is bounded under $\BD$, the map
$t\mapsto\Delta\Phi u(t,\cdot)$ is continuous in $\ell^\infty(X)$ and so is
$t\mapsto f(t,\cdot)-\Delta\Phi u(t,\cdot)$.
The preceding integral identity therefore holds in $\ell^\infty(X)$, and the
fundamental theorem of calculus in Banach spaces gives
\[
u\in C^1((0,T);\ell^\infty(X)),
\qquad
\partial_t u(t)=f(t)-\Delta\Phi u(t).
\]
Thus, $u$ is an $\ell^\infty(X)$-classical solution.

If $\FM$ holds, then $u(t,\cdot),\Delta\Phi u(t,\cdot),f(t,\cdot)\in\ell^p(X,\mu)$ for
every finite $p$, and
\[
\|h\|_p\le \mu(X)^{1/p}\|h\|_\infty
\]
for every $h \in \ell^p(X,\mu)$.
In particular, $f\in C([0,T];\ell^p(X,\mu))$, and the same continuity and differentiability conclusions
as above hold in
$\ell^p(X,\mu)$ for $1\le p<\infty$. This proves the claim for every
$p\in[1,\infty]$.
\end{proof}

As a consequence, in the bounded degree regime and under finite measure, we always
get the existence of $\ell^p$-classical solutions, compare with Corollary~4.2 in \cite{bianchi2022generalized}
which gives the conclusion for $p=1$ under bounded degree and continuity of $\Phi$. 

\begin{corollary}\label{c:classical-solution}
Let $G$ satisfy $\BD$ and $\FM$. Then, for every $u_0\in\ell^\infty(X)$,
$f\in C([0,\infty);\ell^\infty(X))$,
$T>0$ and $p\in[1,\infty]$, there exists an
$\ell^p$-classical solution on $[0,T]$ of the \eqref{eq:C-D} with data $(u_0,f)$.
\end{corollary}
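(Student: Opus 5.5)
The plan is to combine the existence theorem for extremal pointwise solutions with the classical-solution criterion just proved. Fix $u_0\in\ell^\infty(X)$, $f\in C([0,\infty);\ell^\infty(X))$, $T>0$ and $p\in[1,\infty]$. By Theorem~\ref{thm:extremal-bounded-pointwise-solutions}, applied for instance with the canonical constants $A=A_0$ and $B=B_0$ as in Corollary~\ref{cor:canonical-extremal-box}, there is a global pointwise solution $u:=\underline u^{A_0,f}$ of the \eqref{eq:C-D} with data $(u_0,f)$. By the barrier bounds \eqref{eq:forced-extremal-bounds}, it satisfies $a_0(t)\le u(t,x)\le b_0(t)$. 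Since $a_0$ is nonincreasing and $b_0$ is nondecreasing, $u$ is bounded on $[0,T]\times X$ by $\max\{|a_0(T)|,|b_0(T)|\}$, which is finite because $\sup_{[0,T]}\|f(s,\cdot)\|_\infty<\infty$.

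Next we restrict $u$ to $[0,T]\times X$, which gives a bounded pointwise solution on $[0,T]\times X$ in the sense of Definition~\ref{def:pointwise_solution}. The restriction of $f$ lies in $C([0,T];\ell^\infty(X))$. This places us exactly in the hypotheses of Lemma~\ref{l:classical-solution}. Under \BD{} the lemma makes $u$ an $\ell^\infty$-classical solution, and under \FM{} it makes $u$ an $\ell^p$-classical solution for every $p\in[1,\infty]$.

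The only points requiring care are compatibility checks with Definition~\ref{def:classical_solution}. First, $u_0\in\ell^p(X,\mu)$ and $f\in C((0,T);\ell^p(X,\mu))$; both follow from \FM{} via $\|h\|_p\le\mu(X)^{1/p}\|h\|_\infty$. Second, $u(t)\in\dom(\Deltaphip)$ for each $t$, since $\Delta\Phi u(t)$ is bounded under \BD{} and hence lies in $\ell^p$ under \FM{}. Third, continuity at $t=0$ in $\ell^p$: this follows from the Lipschitz estimate in $\ell^\infty$ in the proof of Lemma~\ref{l:classical-solution}, transferred to $\ell^p$ by \FM{}. I expect no genuine obstacle; the step most worth double-checking is that the lemma's hypothesis ``bounded on $[0,T]\times X$'' is met for time-dependent forcing, which the monotonicity of the barriers settles.
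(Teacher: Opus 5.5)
Your proposal is correct and follows the paper's proof exactly: apply Theorem~\ref{thm:extremal-bounded-pointwise-solutions}, restrict the resulting global pointwise solution to $[0,T]\times X$, where it is bounded, and then invoke Lemma~\ref{l:classical-solution}. The canonical choice $A=A_0$, $B=B_0$ and the compatibility checks with Definition~\ref{def:classical_solution} are harmless extras, since any admissible $A,B$ work and those checks are already handled inside the lemma.
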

\begin{proof}
Apply Theorem~\ref{thm:extremal-bounded-pointwise-solutions}, restrict the resulting global pointwise solution to $[0,T]\times X$, where it is bounded, and then apply
Lemma~\ref{l:classical-solution}.
\end{proof}

Next, we establish the existence and uniqueness of global $\ell^\infty$-classical solutions
under bounded degree and a locally Lipschitz assumption on $\phi$.

\begin{theorem}\label{t:globalwp}
Let $G$ satisfy $\BD$ and assume that $\phi$ is locally Lipschitz. Then,
for every $u_0\in\ell^\infty(X)$ and every $f\in C([0,\infty);\ell^\infty(X))$, there exists a unique global
$\ell^\infty(X)$-classical solution $u$ of the \eqref{eq:C-D} with data $(u_0,f)$, i.e.,
\[
u\in C([0,\infty);\ell^\infty(X))
\cap C^1((0,\infty);\ell^\infty(X)),
\qquad
u(t)\in \dom(\mathcal L^{(\infty)})
\quad \text{for } t>0
\]
and
\[
( \partial_t +\Delta \Phi)u(t)=f(t)
\quad\text{in }\ell^\infty(X)\quad \text{for } t>0,
\qquad \text{with } u(0)=u_0.
\]
Moreover, if $A_0$, $B_0$, $a_0$ and $b_0$ are as in Remark~\ref{rem:sharpened-barriers}, then
\[
a_0(t)\le u(t)\le b_0(t)
\qquad
\text{for all } t\ge0.
\]
In particular,
\[
\|u(t,\cdot)\|_\infty
\le
\|u_0\|_\infty+\int_0^t\|f(s,\cdot)\|_\infty\dd s
\qquad \text{for all } t\ge0
\]
and, for $f=0$,
\[
A_0\le u(t)\le B_0
\qquad \text{and} \qquad
\|u(t,\cdot)\|_\infty\le\|u_0\|_\infty
\qquad \text{for all } t\ge0.
\]
\end{theorem}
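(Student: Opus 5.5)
Existence will come from Theorem~\ref{thm:extremal-bounded-pointwise-solutions} with the canonical constants $A=A_0$, $B=B_0$ (Corollary~\ref{cor:canonical-extremal-box}). It yields a global pointwise solution $u:=\underline u^{A_0,f}$, bounded on $[0,T]\times X$ for every $T>0$, with $a_0(t)\le u(t,x)\le b_0(t)$. Since $u_0\in\ell^\infty(X)$ and $f\in C([0,T];\ell^\infty(X))$ for every $T$, Lemma~\ref{l:classical-solution} (which only needs \BD) applied on each $[0,T]$ shows that $u$ is an $\ell^\infty$-classical solution on $[0,T]$. Because $T$ is arbitrary, $u\in C([0,\infty);\ell^\infty(X))\cap C^1((0,\infty);\ell^\infty(X))$. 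Moreover, $\Delta\Phi u(t)\in\ell^\infty$ by the bound $\|\Delta\Phi u(t)\|_\infty\le 2D\|\Phi u(t)\|_\infty$, so $u(t)\in\dom(\mathcal L^{(\infty)})$. The stated bounds then follow from $a_0\le u\le b_0$: one has $|a_0(t)|,|b_0(t)|\le \|u_0\|_\infty+\int_0^t\|f(s)\|_\infty\dd s$ since $\|f^\pm\|_\infty\le\|f\|_\infty$ and $|A_0|,B_0\le\|u_0\|_\infty$. For $f=0$ the barriers are the constants $A_0,B_0$.

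Uniqueness is where local Lipschitz continuity of $\phi$ enters. Note that uniqueness among \emph{all} $\ell^\infty$-classical solutions is needed, not only those trapped between $a_0$ and $b_0$, so the trapping property of Theorem~\ref{thm:extremal-bounded-pointwise-solutions} is insufficient by itself. I will use a Gronwall argument in $\ell^\infty$. Let $u,v$ be two $\ell^\infty$-classical solutions with the same data, fix $T>0$, and set $R:=\sup_{[0,T]}\max\{\|u(t)\|_\infty,\|v(t)\|_\infty\}$. This is finite by continuity into $\ell^\infty$ on the compact interval $[0,T]$; at the endpoint $t=0$ use continuity on $[0,T]$. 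Let $L_R$ be the Lipschitz constant of $\phi$ on $[-R,R]$. Then $\|\Phi u(t)-\Phi v(t)\|_\infty\le L_R\|u(t)-v(t)\|_\infty$, and by \BD{} with \cite[Theorem~2.15]{KLW21}, $\|\Delta\Phi u(t)-\Delta\Phi v(t)\|_\infty\le 2DL_R\|u(t)-v(t)\|_\infty$. Integrating the equation in $\ell^\infty$, which is legitimate by $C^1$ regularity on $(0,T)$ and continuity at $0$, gives
\[
\|u(t)-v(t)\|_\infty\le 2DL_R\int_0^t\|u(s)-v(s)\|_\infty\dd s .
\]
Gronwall then yields $u=v$ on $[0,T]$, and since $T$ is arbitrary, on $[0,\infty)$.

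The only delicate point is justifying the integral identity in $\ell^\infty$ up to $t=0$. Classical solutions are only $C^1$ on the open interval. I will integrate on $[s,t]$ with $s>0$ and let $s\downarrow0$, using continuity of $u,v$ into $\ell^\infty$ at $0$. The integrand $\Delta\Phi u-\Delta\Phi v$ is bounded by $4DK$ with $K=\sup_{|r|\le R}|\phi(r)|$, so the limit is harmless. Alternatively, the same bound can be applied pointwise at each vertex, avoiding Banach-valued calculus entirely. As a consistency check, uniqueness also shows that the extremal solutions $\underline u^{A_0,f}$ and $\overline u^{B_0,f}$ coincide in this regime.
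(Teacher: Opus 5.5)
Your proposal is correct and follows essentially the same route as the paper. Existence comes from the canonical extremal solution of Corollary~\ref{cor:canonical-extremal-box}, upgraded to an $\ell^\infty$-classical solution on each $[0,T]$ by Lemma~\ref{l:classical-solution}; uniqueness comes from a Gronwall argument using the $2D$ bound for $\Delta$ on $\ell^\infty(X)$ and the local Lipschitz continuity of $\phi$ on a common ball; and the norm bounds come from the barriers $a_0\le u\le b_0$. Your care in integrating on $[s,t]$ with $s>0$ and then letting $s\downarrow0$ simply makes explicit a step the paper leaves implicit.
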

\begin{proof}
Set $F(t,u):=f(t)-\Delta\Phi u$ and
$D:=\sup_{x\in X}\Deg(x)<\infty.$
Under \BD, the operator $\Delta\colon\ell^\infty(X)\to\ell^\infty(X)$
is bounded with norm at most $2D$, see \cite[Theorem~2.15]{KLW21}. 
Fix $R>0$, and let $L_R$ be a Lipschitz
constant for $\phi$ on $[-R,R]$. If
$\|u\|_\infty,\|v\|_\infty\le R$, then, for every $t\ge0$,
\[
\|F(t,u)-F(t,v)\|_\infty
=
\|\Delta\Phi u-\Delta\Phi v\|_\infty
\le 2D\,\|\Phi u-\Phi v\|_\infty
\le 2DL_R\|u-v\|_\infty.
\]
Hence, $F(t,\cdot) \colon \ell^\infty(X)\to\ell^\infty(X)$ is locally Lipschitz, uniformly in $t\ge0$.

By Corollary~\ref{cor:canonical-extremal-box}, there exists a global
pointwise solution
\[
u:=\underline u^{A_0,f}\colon[0,\infty)\times X\to\R,
\]
bounded on every compact time interval, such that
\[
a_0(t)\le u(t,\cdot)\le b_0(t)
\qquad\text{for every }t\ge0.
\]
For every $T>0$, Lemma~\ref{l:classical-solution}, applied to the restriction
of $u$ to $[0,T]\times X$, shows that this restriction is an
$\ell^\infty(X)$-classical solution. Since $T>0$ is arbitrary, $u$ is a
global $\ell^\infty(X)$-classical solution with the regularity and operator
domain properties stated in the theorem.

If $v$ is another global $\ell^\infty(X)$-classical solution with the same
data, then, on every compact time interval, continuity at $t=0$ and the
differential equation for $t>0$ give the integral identities associated with
the Banach-space ODE
\[
w'(t)=F(t,w(t)),
\qquad
w(0)=u_0.
\]
The ranges of $u$ and $v$ on such an interval lie in a common bounded ball of
$\ell^\infty(X)$, where $F(t,\cdot)$ is Lipschitz uniformly in $t$; moreover,
the forcing term cancels in the difference of the two integral identities.
Gronwall's inequality therefore gives $u=v$ on every
finite interval and hence on $[0,\infty)$.

Finally, since $\max\{-A_0,B_0\}=\|u_0\|_\infty$ and
$\|f(s,\cdot)\|_\infty=\max\{\|f^+(s,\cdot)\|_\infty,\|f^-(s,\cdot)\|_\infty\}$,
the barrier bounds imply
\[
\|u(t,\cdot)\|_\infty
\le
\max\{-a_0(t),\,b_0(t)\}
\le
\|u_0\|_\infty+\int_0^t\|f(s,\cdot)\|_\infty\dd s
\qquad\text{for every }t\ge0.
\]
For $f=0$, one has $a_0= A_0$ and $b_0= B_0.$ This completes the proof.
\end{proof}

\section{Properties of mild solutions: existence, extinction, and smoothing}\label{sec:extinction}

In this section, we study mild solutions of the \eqref{eq:C-D} introduced in Definition~\ref{def:weak_solution}. We first establish existence and uniqueness for a general nonlinearity and a forcing term. We then specialize to the porous medium nonlinearity and to the homogeneous problem, that is, $\phi_m(s)=s |s|^{m-1}$ for $m>0$ and $f= 0$: assuming the Sobolev inequality \SP[\nu], we prove finite-time extinction in the fast diffusion range $0<m<2/\nu$ and $\ell^1$-$\ell^q$ smoothing estimates in the range $m>2/\nu$. This discrete Sobolev dichotomy is parallel to the Euclidean critical-line theory for porous-medium and fast-diffusion equations; see \cite[Chapters~2, 3 and~5]{vazquez2006smoothing}. Related Riemannian results obtained through Laplacian cut-offs, including \(L^1\)-contractivity and extinction-time estimates, appear in \cite[Section~4]{bianchi2018laplacian}. We close the section with examples of graphs which satisfy \SP[\nu] and for which the resulting critical exponent is the same as in the Euclidean setting. We stress that in this section no restriction is placed on the killing term, i.e., $\kappa\geq 0$ is arbitrary.

\subsection{Existence and uniqueness of mild solutions}\label{ssec:mild-existence}

The first question is whether mild solutions exist. Building on recent results from~\cite{bksw}, we state an improved version of the existence and uniqueness theorem for mild solutions of~\cite[Theorem~2]{bianchi2022generalized}.
We note that this holds for a general nonlinearity on
\(\ell^1(X,\mu)\) and for the linear Laplacian on general
\(\ell^p(X,\mu)\).

\begin{theorem}\label{thm:existence-mild}
Let $G$ be a graph. Fix $p=1$ if $\Phi\neq \operatorname{id}$ and fix
$p\in [1,\infty)$ if $\Phi = \operatorname{id}$. Assume that
\begin{enumerate}
    \item[\textup{(i)}] $u_0 \in \ell^p(X,\mu)$;
    \item[\textup{(ii)}] $f \in L^1_{\rm{loc}}\left([0,T);\ell^p(X,\mu)\right)$.
\end{enumerate}
Then there exists a unique mild solution $u$ of the \eqref{eq:C-D} with
data \((u_0,f)\).
Furthermore, $u(t) \in \ell^p(X,\mu)$ for all $t\in[0,T]$ and there exists a continuous function
$\delta\colon [0,\infty)\to[0,\infty)$ with $\delta(0)=0$, depending only on $u_0$, $f$ and $T$, such that, for every $\eps>0$ and every
$\eps$-approximate solution $u_\eps$ of the \eqref{eq:C-D},
\begin{equation*}
    \norm{u(t)-u_\eps(t)}{p} \leq \delta(\eps)
    \qquad \mbox{for every } t\in[0,T-\eps].
\end{equation*}

Moreover, for any two pairs $(u_0,f)$ and $(\hat u_0,\hat f)$ satisfying
(i) and (ii), the corresponding mild solutions $u,\hat u$ satisfy
\begin{equation*}
    \norm{u(t_2)-\hat u(t_2)}{p}
    \leq
    \norm{u(t_1)-\hat u(t_1)}{p}
    +
    \int_{t_1}^{t_2}\norm{f(s)-\hat f(s)}{p}\dd s
    \qquad \text{for all } 0\leq t_1<t_2\leq T.
\end{equation*}
Finally, if $u_0\geq 0$ and $f(t)\geq 0$ (or $u_0\leq 0$ and $f(t)\leq 0$)
for every $t\in[0,T]$, then $u(t)\geq 0$ (or $u(t)\leq 0$) for every
$t\in[0,T]$.
\end{theorem}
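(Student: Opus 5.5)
The plan is to place the statement inside the Crandall--Liggett / Bénilan framework for accretive operators. Once we know that the operator is m-accretive in $\ell^p(X,\mu)$, with the right $p$, all the conclusions follow from the abstract theory in \cite{benilan1988evolution,barbu2010nonlinear}: existence and uniqueness of the mild solution, the uniform $\delta(\eps)$-approximation of every $\eps$-approximate solution, and the $\ell^p$-contraction estimate with forcing. So the work splits into two parts: verifying the operator-theoretic hypotheses, and then reading off the conclusions, including the order-preserving one.

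\textbf{Step 1: accretivity.} For $p=1$ and general $\phi$, we show that $\Deltaphi$ is T-accretive in $\ell^1(X,\mu)$. Take $u,v\in\dom(\Deltaphi)$ and test $\Delta\Phi u-\Delta\Phi v$ against a sign function $\sigma\in\operatorname{sign}_0(u-v)$, cut off to finite sets $\Omega_n$. The key inequality, obtained exactly as the estimate $I(t)\ge0$ in the proof of Theorem~\ref{thm:comparison} via the localized Green formula Proposition~\ref{prop:greenf}, is
\[
\langle \Delta\Phi u-\Delta\Phi v,\mathds 1_{\{u>v\}}\rangle\ge0 .
\]
It uses only the monotonicity of $\phi$ and $\kappa\ge0$. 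The tail terms should vanish as $n\to\infty$ because $\Delta\Phi u,\Delta\Phi v\in\ell^1$. For $\Phi=\id$ and $p\in[1,\infty)$, we use the standard fact that the maximal restriction of $\Delta$ in $\ell^p$ generates a contraction semigroup, see \cite{KLW21}. In that case only m-accretivity is needed.

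\textbf{Step 2: range condition.} We need $R(\id+\lambda\Deltaphip)=\ell^p(X,\mu)$ for every $\lambda>0$. This is where we rely on \cite{bksw}, which removes the earlier hypotheses of \cite[Theorem~2]{bianchi2022generalized}. The construction I would attempt is as follows. Fix $g\in\ell^1$ and solve $u_n+\lambda\Deltadir[n]\Phi u_n=g$ on the finite sets $\Omega_n$. Finite-dimensional solvability comes from the monotone continuous map $\id+\lambda\Deltadir[n]\Phi$, via a Brouwer or Minty argument. The $\ell^1$-contraction from Step~1 gives uniform bounds and monotone convergence for $g\ge0$. For general $g$, split into $g^\pm$ and use T-accretivity, then pass to the limit using dominated convergence in the Laplacian sum, in the spirit of Step~7 of Theorem~\ref{thm:extremal-bounded-pointwise-solutions}.

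This step is the main obstacle. The difficulty is showing that the limit $u$ actually satisfies $\Delta\Phi u\in\ell^1$ when the graph is not locally finite. The limit identity $\lambda\Delta\Phi u=g-u$ gives this for free once the pointwise passage to the limit is justified. Justifying that passage needs $\Phi u\in\mathcal F$, which I would obtain from uniform bounds on $\sum_y w(x,y)|\phi(u_n(y))|$.

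\textbf{Step 3: conclusions.} With m-accretivity in hand, Crandall--Liggett and Bénilan's theorem give the unique mild solution, the estimate $\norm{u(t)-u_\eps(t)}{p}\le\delta(\eps)$, and the Bénilan integral inequality. The latter yields the stated contraction with forcing.

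\textbf{Step 4: positivity.} The resolvents are order preserving by T-accretivity. Hence if $u_0\ge0$ and $f_k\ge0$, each implicit Euler step gives $u_k\ge0$ by induction. We may choose the approximate forcings $f_k\ge0$, for example as positive parts of averages of $f$. Since $\ell^p$-convergence implies pointwise convergence, the limit satisfies $u(t)\ge0$. The nonpositive case follows by symmetry, applying the argument to $-u$ with $\tilde\phi(s)=-\phi(-s)$.
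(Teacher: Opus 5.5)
There is a genuine gap in Step~1, and it propagates to Steps~3 and~4. The maximal realization $\Deltaphi$ is in general \emph{not} accretive in $\ell^1(X,\mu)$. Likewise, for $\Phi=\id$ the maximal $\ell^p$-restriction of $\Delta$ need not generate a contraction semigroup; what \cite{KLW21} gives is that the generator is \emph{some} restriction of $\Delta$, in general a proper one.

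Your cut-off argument breaks exactly at the tail. With $V=\Phi u-\Phi v$ and $\psi=\mathds 1_{\{u>v\}}\mathds 1_{\Omega_n}$, Proposition~\ref{prop:greenf} gives nonnegative interior and killing contributions. The edges leaving $\Omega_n$, however, contribute
\[
\sum_{x\in\Omega_n,\ y\notin\Omega_n} w(x,y)\bigl(V(x)-V(y)\bigr)\mathds 1_{\{u>v\}}(x),
\]
which has no sign. The condition $\Delta\Phi u,\Delta\Phi v\in\ell^1$ only makes the left-hand side converge; it does not force this flux through infinity to vanish. That vanishing is a stochastic-completeness property (compare Lemma~\ref{lem:no-flux-killing}). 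In Theorem~\ref{thm:comparison} the sign came from \emph{ordered exterior data}, which you do not have here.

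Here is a concrete counterexample. Take $\kappa=0$, $\phi=\id$, and $G$ stochastically incomplete with $\mu(X)<\infty$, e.g.\ $X=\mathbb N_0$, $w(n,n+1)=(n+1)^2$, $\mu(n)=2^{-n}$. For $\beta>0$, the function $h:=\mathbf 1-\beta R_\beta\mathbf 1$ is bounded, nonnegative, nonzero, and satisfies $(\beta+\Delta)h=0$. Hence $h\in\ell^p$ and $\Delta h=-\beta h\in\ell^p$ for every $p$. Then:
\begin{itemize}
\item $\langle\Delta h,\mathds 1_{\{h>0\}}\rangle=-\beta\|h\|_1<0$;
\item $\|h+\lambda\Delta h\|_p=|1-\lambda\beta|\,\|h\|_p<\|h\|_p$ for $0<\lambda<2/\beta$;
\item for $\lambda=1/\beta$, the implicit Euler step $(\id+\lambda\Deltaphi)u_k=u_{k-1}$ is not even uniquely solvable.
\end{itemize}
The same works for $\phi_m$ with $u=h^{1/m}$.

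The paper avoids this by never claiming accretivity of $\Deltaphip$. It takes from \cite{bksw} a densely defined m-accretive \emph{restriction} $\mathcal A\subseteq\Deltaphip$, selected along a finite exhaustion. Its resolvents are $\ell^1$-limits of the finite zero-Dirichlet resolvents. On $\Omega_n$ both functions have zero exterior datum, so the boundary terms do have a sign, as in Theorem~\ref{thm:comparison}. Contractivity and order preservation therefore hold there and pass to the limit. The abstract theory is then applied to $\mathcal A$. Since $\mathcal A\subseteq\Deltaphip$, every $\mathcal A$-Euler chain solves \eqref{implicit_Euler}, so its limit is a mild solution of \eqref{eq:C-D}. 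Positivity follows from order preservation of these resolvents.

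Your Step~2 is essentially the construction of this resolvent. The repair is to define the operator through that limit rather than use the maximal one. As the example shows, uniqueness and the $\delta(\eps)$ bound then come from the theory for $\mathcal A$, i.e.\ for Euler schemes built from its resolvents.

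A minor point: for nonlinear $\phi$, $\id+\lambda\Deltadir[\Omega_n]\Phi$ is not monotone in $\ell^2$, so Minty does not apply. Finite-dimensional solvability should come from continuity plus $\ell^1$-accretivity, or from Brouwer with the maximum-principle bounds.
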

\begin{proof}
By \cite[Theorem~3.11 and its proof, and Theorem~4.6]{bksw}, there exists an
m-accretive operator $\mathcal{A}$ on $\ell^p(X,\mu)$ that is a restriction of $\Deltaphip$ and whose
domain is dense in $\ell^p(X,\mu)$, that is, $\mathcal{A}\subseteq\Deltaphip$ and $\overline{\dom(\mathcal{A})}=\ell^p(X,\mu)$.
Since $\mathcal{A}$ is m-accretive, $u_0\in\overline{\dom(\mathcal{A})}$ and
$f\in L^1_{\rm loc}([0,T];\ell^p(X,\mu))$, the abstract theory of evolution equations governed
by accretive operators applies, see \cite[Proposition~3.7]{benilan1988evolution} or
\cite[Corollary~4.1]{barbu2010nonlinear}. It yields a unique mild solution
$u\in C([0,T];\ell^p(X,\mu))$ of $(\partial_t+\mathcal{A})u = f$ with $u(0)=u_0$, realized as the uniform limit of
$\eps$-approximate solutions. Since $\mathcal{A}\subseteq\Deltaphip$, $u$ is a mild solution
of the \eqref{eq:C-D} in the sense of Definition~\ref{def:weak_solution}. The same
theory provides the convergence estimate with a continuous $\delta$ satisfying
$\delta(0)=0$ and the comparison estimate for any two data pairs. Moreover,
$u(t)\in\overline{\dom(\mathcal{A})}=\ell^p(X,\mu)$ for every $t$, so $u(t)\in\ell^p(X,\mu)$ for all
$t\in[0,T]$.

It remains to prove positivity. The resolvents $(\id+\lambda \mathcal{A})^{-1}$, $\lambda>0$,
are order-preserving (see \cite[Theorem~3.11 and its proof, and Theorem~4.6]{bksw}). Hence,
if $u_0\ge0$ and $f(t)\ge0$ for every $t$, then
each $\eps$-approximate solution is obtained by applying these resolvents to
nonnegative data and is therefore nonnegative. Passing to the limit gives
$u\ge0$.
The case $u_0\le0$ and $f(t)\le0$ is analogous.
\end{proof}

\begin{remark}\label{rem:minimal-solutions}
In both Theorem~\ref{thm:extremal-bounded-pointwise-solutions} and Theorem~\ref{thm:existence-mild}, we impose no assumptions on the graph $G$ beyond the standing ones in Definition~\ref{def:graph}. In particular, we assume neither local finiteness, nor uniform positivity of the measure, nor boundedness of the degree. The key point is that both proofs are constructive, based on a finite exhaustion strategy over Dirichlet subgraphs, see, in particular, Theorem~3.11 and its proof and Theorem~4.6 in~\cite{bksw}. This is what allows us to keep the assumptions on $G$ to a minimum.
\end{remark}

For the homogeneous problem in \(\ellone\), let
\(\mathcal A\subseteq\Deltaphi\) be the densely defined m-accretive realization
selected in the proof of Theorem~\ref{thm:existence-mild}, and let
\((S(t))_{t\ge0}\) be the nonlinear contraction semigroup generated by
\(-\mathcal A\). By the homogeneous semigroup representation
\cite[Corollary~4.2]{barbu2010nonlinear}, the unique mild solution with initial
datum \(u_0\in\ellone\) is \(u(t)=S(t)u_0\).

The next lemma states the resulting time-translation property of the
homogeneous evolution: mild solutions can be restarted at any positive time.
This is standard, and we include a short proof for the convenience of the
reader.

\begin{lemma}\label{lem:translation}
Let $u_0\in\ellone$ and let $u$ be the corresponding mild solution of
\eqref{eq:C-D} with $f=0$. Then, for every $s\ge0$, the time translate
$t\mapsto u(s+t)$ is the mild solution of the homogeneous problem with initial
datum $u(s)$.
\end{lemma}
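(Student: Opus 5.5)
The plan is to use the semigroup representation stated just before the lemma: $u(t)=S(t)u_0$, where $(S(t))_{t\ge0}$ is the contraction semigroup generated by $-\mathcal A$ and $\mathcal A\subseteq\Deltaphi$ is the m-accretive realization from the proof of Theorem~\ref{thm:existence-mild}. The semigroup property $S(s+t)=S(t)S(s)$ is part of the Crandall--Liggett/B\'enilan theory: the exponential formula $S(t)v=\lim_{n}(\id+\tfrac tn\mathcal A)^{-n}v$ defines a semigroup on $\overline{\dom(\mathcal A)}=\ellone$. Granting it, $u(s+t)=S(t)S(s)u_0=S(t)u(s)$. Since $u(s)\in\ellone=\overline{\dom(\mathcal A)}$, the representation \cite[Corollary~4.2]{barbu2010nonlinear} applied to the datum $u(s)$ says that $t\mapsto S(t)u(s)$ is the unique mild solution with that datum, which is the claim.

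For a more self-contained argument, one can avoid citing the semigroup property and work from Definition~\ref{def:weak_solution}. Fix $s\ge0$ and $T>0$, and take $\eps$-approximate solutions $u_\eps$ of the problem on $[0,s+T]$. For each of them, refine its partition by inserting the point $s$ and shift it by $s$: the shifted partition is $\{t_k-s: t_k\ge s\}$, starting from the value $u_\eps(s)$. These are $\eps$-approximate solutions on $[0,T]$ with the perturbed initial datum $u_\eps(s)$. They satisfy the same implicit Euler relations \eqref{implicit_Euler}, since with $f=0$ there is no forcing to re-approximate.

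The key point is that refining a partition by one point still gives an $\eps$-approximate solution. The worry is that the discrete step $u_\eps$ near $s$ might not lie in the resolvent chain at $s$ itself. To handle this, build fresh approximate solutions whose partitions contain $s$ from the start; then, by Theorem~\ref{thm:existence-mild}, $\|u(t)-u_\eps(t)\|_1\le\delta(\eps)$ uniformly on $[0,s+T-\eps]$. Compare the shifted scheme with the scheme started exactly at $u(s)$ using the discrete $\ell^1$-contractivity of the resolvents $(\id+\lambda\mathcal A)^{-1}$, which follows from accretivity. The two discrete solutions then differ in $\ell^1$ by at most $\|u_\eps(s)-u(s)\|_1\le\delta(\eps)$ for all times. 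Hence the scheme started at $u(s)$ converges uniformly to $t\mapsto u(s+t)$ on $[0,T-\eps]$, and by uniqueness of mild solutions (Theorem~\ref{thm:existence-mild}) this limit is the mild solution with datum $u(s)$.

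I expect the main obstacle to be exactly this bookkeeping: the shifted scheme starts from $u_\eps(s)$ rather than $u(s)$, and the convergence estimate only holds on $[0,T-\eps]$. I would handle both by the contraction estimate, stated in its discrete form for $\eps$-approximate solutions with different initial data, and then extend to the full interval by continuity of $u$ in $\ellone$. For the write-up I would present the short semigroup-identity argument and mention the discrete check only as justification.
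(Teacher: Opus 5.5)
Your main argument is the paper's proof: write $u(t)=S(t)u_0$, use the semigroup property to get $u(s+t)=S(t)S(s)u_0=S(t)u(s)$, and identify the right-hand side as the unique mild solution with datum $u(s)$. The paper cites \cite[Proposition~4.2]{barbu2010nonlinear} for the semigroup property rather than the exponential formula. Your discrete alternative, with partitions containing $s$ from the start and resolvent contractivity to pass from the datum $u_\eps(s)$ to $u(s)$, is also sound, but it is not needed.
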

\begin{proof}
By the representation above, $u(t)=S(t)u_0$. Hence, by the semigroup property
\cite[Proposition~4.2]{barbu2010nonlinear},
\[
u(s+t)=S(s+t)u_0=S(t)S(s)u_0=S(t)u(s).
\]
The last term is, by uniqueness, the homogeneous mild solution with initial
datum $u(s)$.
\end{proof}

\subsection{Energy estimates for the homogeneous problem}\label{ssec:energy}

For the remainder of this section we work under the following  assumptions:
\begin{itemize}
    \item we fix $m>0$ and take
    \[
        \phi=\phi_m,
        \qquad
        \phi_m(s)=s^m:=s\abs{s}^{m-1},
    \]
    with canonical extension $\Phi$;
    \item  $f\equiv0$ in \eqref{eq:C-D};
    \item given $u_0\in\ellone$, we let $u$ be the mild solution of \eqref{eq:C-D} with data \((u_0,0)\) provided by Theorem~\ref{thm:existence-mild} with $p=1$. Since $T\in(0,\infty)$ is arbitrary and, by uniqueness, the solutions on nested intervals $[0,T]$ agree, $u$ is defined for all $t\ge0$.
\end{itemize}

Two consequences of Theorem~\ref{thm:existence-mild} will be used repeatedly. For any initial data $u_0,\hat u_0\in\ellone$, the corresponding mild solutions $u$ and $\hat u$ satisfy the $\ell^1$-contraction estimate
\begin{equation}\label{eq:l1-contraction}
    \norm{u(t)-\hat u(t)}{1}\le\norm{u_0-\hat u_0}{1}
    \qquad\mbox{for every } t\ge0.
\end{equation}
By uniqueness, the zero initial datum yields the zero solution: $u_0=0$ implies $u\equiv0$. Taking $\hat u_0=0$ in \eqref{eq:l1-contraction}, so that $\hat u\equiv0$, we obtain the decay estimate
\begin{equation}\label{eq:l1-decay}
    \norm{u(t)}{1}\le\norm{u_0}{1}
    \qquad\mbox{for every } t\ge0.
\end{equation}
Since $u\equiv0$ whenever $u_0=0$, assuming $\norm{u_0}{1}>0$ in the results below entails no loss of generality.

We also need the resolvents of the m-accretive operator $\mathcal A$ fixed
above:
\[
    J_\lambda:=(\id+\lambda\mathcal A)^{-1}\colon\ellone\to\dom(\mathcal A),
    \qquad
    \lambda>0.
\]
The finite-domain approximation facts needed below are established in
\cite[Lemma~3.8 and Theorem~3.11 and its proof]{bksw}.
\begin{lemma}
\label{lem:finite-dirichlet-resolvent}
For every \(\lambda>0\), the following assertions hold.
\begin{enumerate}[(a)]
    \item for every finite $\Om\subset X$ and every $g\in\ellone$ there is
    a unique $u_\Om\in C(\Om)$ solving
    \[
        u_\Om+\lambda\Deltadir[\Omega] u_\Om^m=\projom g
        \qquad\mbox{on }\Om,
    \]
    and it satisfies $\norm{u_\Om}{1}\le\norm{\projom g}{1}$;
    \item along the fixed finite connected exhaustion $(X_n)$ used to
    select \(\mathcal A\), the zero extensions $\embn u_{X_n}$ of the
    solutions in \textup{(a)} with $\Omega=X_n$ converge to $J_\lambda g$
    in $\ellone$;
    \item $J_\lambda$ is an $\ell^1$-contraction fixing the origin:
    \[
        J_\lambda0=0
        \qquad\text{and}\qquad
        \norm{J_\lambda g-J_\lambda h}{1}\le\norm{g-h}{1}
        \quad\text{for all }g,h\in\ellone.
    \]
\end{enumerate}
\end{lemma}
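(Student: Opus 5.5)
For part (a) I would reduce the finite resolvent problem to a strictly monotone finite-dimensional equation. Fix $\lambda>0$, a finite $\Omega$ and $g\in\ellone$. Write the unknown as $v=u_\Om^m$, so that $u_\Om=\phi_m^{-1}(v)=v^{1/m}$, which makes sense because $\phi_m$ is a strictly increasing homeomorphism of $\R$. The equation then reads $\beta(v)+\lambda\Deltadir[\Omega]v=\projom g$ with $\beta:=\Phi_m^{-1}$. The operator $v\mapsto\beta(v)+\lambda\Deltadir[\Omega]v$ on $C(\Omega)\cong\R^{\Omega}$ is continuous. It is strictly monotone with respect to $\langle\cdot,\cdot\rangle_\Omega$, since $\langle\Deltadir[\Omega]v-\Deltadir[\Omega]\tilde v,v-\tilde v\rangle_\Omega=\Qdir[\Omega](v-\tilde v)\ge0$ by Proposition~\ref{prop:greenf}, and $\beta$ is strictly increasing componentwise. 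It is also coercive, because $\langle\beta(v),v\rangle_\Omega=\sum|v|^{1+1/m}\mu$ grows superlinearly. The finite-dimensional Minty--Browder theorem (or a Brouwer degree argument) then gives existence, and strict monotonicity gives uniqueness.

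For the $\ell^1$ bound I would use a T-accretivity argument rather than Green's formula with a smooth test function. Let $u_\Om,\hat u_\Om$ solve the problem with data $g,h$. I test the difference of the equations against $\chi=\mathds 1_{\{u_\Om>\hat u_\Om\}}$. Exactly as in the proof of Theorem~\ref{thm:comparison}, $\langle\Deltadir[\Omega](u_\Om^m-\hat u_\Om^m),\chi\rangle_\Omega\ge0$: internal edges give nonnegative products, exterior edges have zero data on both sides, and killing terms are nonnegative on $\{\chi=1\}$. Hence $\sum(u_\Om-\hat u_\Om)_+\mu\le\sum(\projom g-\projom h)_+\mu$. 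Doing the same for the negative part and adding gives $\norm{u_\Om-\hat u_\Om}{1}\le\norm{\projom(g-h)}{1}$. Taking $h=0$, so that $\hat u_\Om=0$ by uniqueness, yields the stated bound; the same estimate also gives uniqueness a second time.

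Part (b) is the real content and I would not reprove it. It is the construction of $\mathcal A$ along the fixed exhaustion in \cite[Lemma~3.8 and Theorem~3.11]{bksw}, where $J_\lambda g$ is defined as the $\ell^1$-limit of $\embn u_{X_n}$. The argument there runs as follows: first for $0\le g$, monotonicity in $n$ follows from comparison using zero exterior data and $u_{X_n}\ge0$, and uniform $\ell^1$ bounds from (a) give monotone convergence; then signed data are handled by splitting and the contraction estimate. So the only task here is to check that the resolvent of the selected $\mathcal A$ is exactly this limit, which holds by definition of the selection.

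Part (c) follows from (a) and (b). The contraction estimate of (a) passes to the limit along $(X_n)$, because $\norm{\projn(g-h)}{1}\le\norm{g-h}{1}$ and $\ell^1$-convergence preserves norms. For the origin, $u_\Om=0$ solves the problem with $g=0$, so $J_\lambda0=0$. I expect the main obstacle to be (b), specifically the identification of the limit with the resolvent of the m-accretive realization rather than of some other extension inside $\Deltaphi$. I would rely on the cited construction for this instead of rederiving it.
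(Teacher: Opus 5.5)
Your proposal is correct, but it is more self-contained than the paper, which gives no argument for this lemma and simply cites \cite[Lemma~3.8 and Theorem~3.11 and its proof]{bksw} for all three items. You prove (a) with this paper's own tools. Existence comes from the substitution $v=u_\Om^m$ and the finite-dimensional Minty--Browder theorem, with monotonicity supplied by Proposition~\ref{prop:greenf}. Uniqueness and the $\ell^1$ bound come from the sign argument of Theorem~\ref{thm:comparison}, applied with zero exterior data. That argument gives you the full finite contraction $\norm{u_\Om-\hat u_\Om}{1}\le\norm{\projom(g-h)}{1}$, not just the bound against $0$. This is what lets you obtain (c) as a one-line passage to the $\ell^1$-limit in (b), so only the identification of $J_\lambda$ with the exhaustion limit is outsourced. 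The citation route, by contrast, gets (c) directly from the m-accretivity of $\mathcal A$ and does not need the finite contraction.

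One caution about (b). This does not affect your proof, since you rely on the citation there exactly as the paper does. Your one-line summary of how the cited construction treats signed data (``splitting and the contraction estimate'') would not work as stated. The finite nonlinear resolvents are not additive, so splitting $g=g_+-g_-$ gives no control on $u_{X_n}(g)$ itself. The uniform contraction only transfers convergence from a dense set, and nonnegative data are not dense in $\ellone$. What is available for signed $g$ is the domination $\abs{u_{X_n}(g)}\le u_{X_n}(\abs{g})$, which follows from order preservation and the oddness of $\phi_m$. Turning that into $\ell^1$-convergence requires some additional mechanism, such as passing to a suitable subsequence of the exhaustion. This is consistent with the lemma's insistence on ``the fixed finite connected exhaustion $(X_n)$ used to select $\mathcal A$'' and with Remark~\ref{rem:zero-Dirichlet-realization}, which asserts exhaustion-independence only for nonnegative data. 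So do not lean on that gloss if you ever try to reprove (b).
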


These resolvents generate the Euler approximations of $u$. Let $0=t_0<t_1<\cdots<t_N=T$ be a partition of $[0,T]$ with $\lambda_k:=t_k-t_{k-1}\le\eps$ for every $k$. Since $\mathcal A\subseteq\Deltaphi$, the resolvent chain
\begin{equation}\label{eq:resolvent-chain}
    v_0:=u_0,
    \qquad
    v_k:=J_{\lambda_k}v_{k-1},
    \qquad k=1,\ldots,N,
\end{equation}
together with the choice $f_k:=0$, solves the implicit Euler system \eqref{implicit_Euler}, so the associated piecewise-constant function \eqref{epsilon_approximation} is an $\eps$-approximate solution of the homogeneous problem. The error estimate of Theorem~\ref{thm:existence-mild} then yields
\begin{equation}\label{eq:chain-convergence}
    \norm{u(t)-u_\eps(t)}{1}\le\delta(\eps)
    \qquad\mbox{for every } t\in[0,T-\eps],
\end{equation}
with $\delta(\eps)\to0$ as $\eps\downarrow0$.

We shall work assuming the Sobolev inequality \SP[\nu] for some $\nu>2$, with associated constant $C_\nu$. Our estimates are governed by the following exponents: for $m>0$ and $q>1$, we set
\begin{equation}\label{eq:exponents}
    \alpha:=\frac{\nu(1-m)}{\nu-2},
    \qquad
    \delta_q:=1+\frac{\nu m-2}{\nu(q-1)}
    \qquad \text{and} \qquad
    \gamma_q:=\frac{(\nu-2)(q-\alpha)}{\nu(q-1)}.
\end{equation}
We also set the constants
\begin{equation}\label{eq:constants}
    c_{m,q}:=\frac{4m(q-1)}{(m+q-1)^2}\in(0,1]
    \qquad \text{and} \qquad
    K_{m,q,\nu}:=\frac{q\,c_{m,q}}{C_\nu}.
\end{equation}
Let us collect the properties of these exponents which should be kept in mind:
\begin{itemize}
    \item $\gamma_q\ge 0$ if and only if $q\ge\alpha$ and, in this case, $\delta_q>0$;
    \item $\delta_q<1$ if and only if $m<2/\nu$ and $\delta_q>1$ if and only if $m>2/\nu$.
\end{itemize}
As we will see, $m<2/\nu$ corresponds to the finite-extinction regime, $m>2/\nu$ to the smoothing regime, and $\alpha$ plays the role of a critical integrability exponent for the initial datum. The borderline case $m=2/\nu$, for which $\delta_q=1$, is not treated here. For \(\nu=2N/(N-2)\), the threshold \(2/\nu\)
becomes
\(m_c=(N-2)/N\), while \(\alpha=N(1-m)/2\), the Euclidean
critical-line exponent; see \cite[Chapter~5]{vazquez2006smoothing}. 

For $q>1$, we further set for $v \in C(X)$
\[
    Y_q(v):=\norm{v}{q}^q\in[0,\infty],
    \qquad
    r_q:=\frac{m+q-1}{2}
    \qquad \text{and} \qquad
    s_q:=\nu r_q,
\]
and, for a finite subset $\Om\subset X$ and $v\in C(\Om)$,
\[
    Y_{q,\Om}(v):=\sum_{x\in\Om}\abs{v(x)}^q\,\mu(x).
\]
A direct computation from \eqref{eq:exponents} shows that
\begin{equation}\label{eq:sq-identities}
    s_q\ge q
    \iff
    q\ge\alpha,
    \qquad
    \delta_q=\frac{2(s_q-1)}{\nu(q-1)}
    \qquad \text{and} \qquad
    \gamma_q=\frac{2(s_q-q)}{\nu(q-1)}.
\end{equation}
In particular, if $q\ge\alpha$, then $1<q\le s_q$ and $\delta_q>0$.

The core of the analysis is the following energy inequality, whose proof is based on a Sobolev-interpolation argument at the level of the implicit Euler scheme.

\begin{lemma}\phantomsection\label{lem:energy-mild}
Assume \SP[\nu] for some $\nu>2$. Let $q>1$ with $q\ge\alpha$ and let
$u_0\in\ellp{1}\cap\ellp{q}$ with $M:=\norm{u_0}{1}>0$.
Then, the mild solution $u$
of the \eqref{eq:C-D} is in $\ellp{q}$ and satisfies
\begin{equation}\label{eq:energy-integral}
    \norm{u(t)}{q}^q
    +
    K_{m,q,\nu}M^{-\gamma_q}
    \int_s^t \norm{u(\tau)}{q}^{q\delta_q}\,d\tau
    \le
    \norm{u(s)}{q}^q
    \qquad\mbox{for all } 0\le s\le t.
\end{equation}
\end{lemma}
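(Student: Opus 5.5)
We plan to prove \eqref{eq:energy-integral} first at the level of the finite Dirichlet resolvents of Lemma~\ref{lem:finite-dirichlet-resolvent}(a), then pass to $J_\lambda$ via part (b), then to the Euler chain \eqref{eq:resolvent-chain}, and finally to $u$ via \eqref{eq:chain-convergence}. The basic one-step estimate we aim for is: if $v=J_\lambda g$ with $\norm{g}{1}\le M$, then
\[
Y_q(v)+\lambda K_{m,q,\nu}M^{-\gamma_q}Y_q(v)^{\delta_q}\le Y_q(g).
\]

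On a finite $\Omega$, let $u_\Omega$ solve $u_\Omega+\lambda\Deltadir[\Omega]u_\Omega^m=\projom g$. We test with $\psi=u_\Omega^{q-1}:=|u_\Omega|^{q-2}u_\Omega$ and use Proposition~\ref{prop:greenf}. This gives
\[
\sum_\Omega u_\Omega\psi\,\mu+\lambda\Qdir[\Omega](u_\Omega^m,u_\Omega^{q-1})=\sum_\Omega g\psi\,\mu .
\]
By Young's inequality, $\sum g\psi\mu\le \frac1q Y_q(g)+\frac{q-1}{q}Y_{q,\Omega}(u_\Omega)$, so that
\[
Y_{q,\Omega}(u_\Omega)+q\lambda\,\Qdir[\Omega](u_\Omega^m,u_\Omega^{q-1})\le Y_q(g).
\]
Next we need the discrete Stroock--Varopoulos inequality
\[
(a^m-b^m)(a^{q-1}-b^{q-1})\ge c_{m,q}\,(a^{r_q}-b^{r_q})^2
\]
for real $a,b$, which is exactly where $c_{m,q}=4m(q-1)/(m+q-1)^2$ comes from. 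We expect this elementary inequality, presumably in the Appendix, to be proved by Cauchy--Schwarz on $\int_b^a$. The killing terms satisfy $\kappa u^mu^{q-1}=\kappa|u|^{2r_q}$ since $c_{m,q}\le1$, and the exterior-edge terms work the same way. Hence
\[
\Qdir[\Omega](u_\Omega^m,u_\Omega^{q-1})\ge c_{m,q}\,\Q(\embom u_\Omega^{r_q}).
\]
Applying \SP[\nu] to $\psi=\embom u_\Omega^{r_q}$ gives $\Q\ge C_\nu^{-1}\norm{u_\Omega}{s_q}^{2r_q}$, where $s_q=\nu r_q$.

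The key interpolation step bounds $Y_q$ by $\norm{\cdot}{1}$ and $\norm{\cdot}{s_q}$. This is legitimate since $1<q\le s_q$ when $q\ge\alpha$, by \eqref{eq:sq-identities}. Hölder gives $\norm{v}{q}\le\norm{v}{1}^{\theta}\norm{v}{s_q}^{1-\theta}$ with $\frac1q=\theta+\frac{1-\theta}{s_q}$. Using $\norm{u_\Omega}{1}\le\norm{g}{1}\le M$ and solving for $\norm{u_\Omega}{s_q}^{2r_q}$, a routine exponent computation using \eqref{eq:sq-identities} should give
\[
\norm{u_\Omega}{s_q}^{2r_q}\ge M^{-\gamma_q}Y_{q,\Omega}(u_\Omega)^{\delta_q}.
\]
Checking that the exponents match $\gamma_q$ and $\delta_q$ is the main bookkeeping obstacle. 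The case $\gamma_q\ge0$ is what makes the power of $M$ go the correct way, since $\norm{\cdot}{1}\le M$ must be used as an upper bound.

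To pass to the limit, we take $\Omega=X_n$. Lemma~\ref{lem:finite-dirichlet-resolvent}(b) gives $\ell^1$ convergence, hence pointwise convergence, and Fatou's lemma yields the one-step inequality for $J_\lambda g$. Here we use that $t\mapsto t+cY^{\delta}$ is increasing and lower semicontinuous, taking a liminf on the left-hand side. Iterating along the chain, with $\norm{v_k}{1}\le M$ by the contraction property, and summing gives
\[
Y_q(v_j)+K M^{-\gamma_q}\sum_{k=i+1}^{j}\lambda_kY_q(v_k)^{\delta_q}\le Y_q(v_i).
\]
Finally, as $\eps\to0$ we have $u_\eps(t)\to u(t)$ in $\ell^1$, hence pointwise. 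Fatou's lemma, applied both in space and to the Riemann sums in time, yields \eqref{eq:energy-integral} with $s=0$ and in particular $u(t)\in\ellp{q}$. For general $s$ we restart at time $s$ using Lemma~\ref{lem:translation} together with $\norm{u(s)}{1}\le M$ from \eqref{eq:l1-decay}.
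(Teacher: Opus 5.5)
Your proposal is correct and follows essentially the same route as the paper. The common steps are: the finite Dirichlet resolvent estimate obtained by testing with $u_\Omega^{q-1}$; the signed-power inequality of Lemma~\ref{lem:signed-power-ineq}; the Sobolev inequality and $\ell^1$--$\ell^{s_q}$ interpolation; Fatou along the exhaustion; summation along the Euler chain; the limit $\eps\downarrow0$; and a restart via Lemma~\ref{lem:translation}. The only difference is cosmetic: you use Young's inequality where the paper uses the equivalent convexity bound $\abs{b}^q-\abs{a}^q\ge q\,a^{q-1}(b-a)$.
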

\begin{proof}
Recall that $q>1$, $q\ge\alpha$ and $M=\norm{u_0}{1}>0$; by \eqref{eq:sq-identities}, $1<q\le s_q$ and $\delta_q>0$, and $\gamma_q\ge0$. We split the proof into five steps.

\medskip
\textbf{Step 1: The finite-graph resolvent estimate.} We claim that, for every $\lambda>0$, every finite $\Om\subset X$ and every $g\in\ellp1\cap\ellp q$ with $\norm{g}{1}\le M$, the solution $u_\Om$ furnished by
Lemma~\ref{lem:finite-dirichlet-resolvent}\textup{(a)}
satisfies
\begin{equation}\label{eq:finite-resolvent-energy}
    Y_{q,\Om}(u_\Om)
    +
    \lambda K_{m,q,\nu}M^{-\gamma_q}\,Y_{q,\Om}(u_\Om)^{\delta_q}
    \le
    Y_{q,\Om}(\projom g).
\end{equation}

By the convexity of $a\mapsto \abs{a}^q$, whose derivative at $a$ is $q\,a^{q-1}$ in the signed-power notation, we have, for every $a,b\in\R$,
\[
    \abs{b}^q-\abs{a}^q
    \ge
    q\, a^{q-1}(b-a).
\]
Applying this with $a=u_\Om(x)$ and $b=\projom g(x)$, multiplying by $\mu(x)$ and summing over $\Om$
gives
\[
    Y_{q,\Om}(\projom g)-Y_{q,\Om}(u_\Om)
    \ge
    q\,\ip{u_\Om^{q-1}}{\projom g-u_\Om}_\Om
    =
    q\lambda\,
    \ip{u_\Om^{q-1}}{\Deltadir[\Omega] u_\Om^m}_\Om
\]
where in the last equality we used the equation $\projom g-u_\Om=\lambda\Deltadir[\Omega] u_\Om^m$. Since $\Om$ is finite, the localized Green formula of Proposition~\ref{prop:greenf} gives
\[
    \ip{u_\Om^{q-1}}{\Deltadir[\Omega] u_\Om^m}_\Om
    =
    \Qdir[\Om]\!\left(u_\Om^m,u_\Om^{q-1}\right).
\]
We now apply Lemma~\ref{lem:signed-power-ineq} with $\sigma=m$ and $\tau=q-1$, so that $c_{\sigma,\tau}=c_{m,q}$ and $(\sigma+\tau)/2=r_q$, to each edge term of $\Qdir[\Om]$; for the boundary and killing terms, which are of the form $w(x,y)\,u_\Om^m(x)u_\Om^{q-1}(x)$ and $\kappa(x)\,u_\Om^m(x)u_\Om^{q-1}(x)$, respectively, we use
\[
    u_\Om^m\,u_\Om^{q-1}
    =
    \abs{u_\Om}^{m+q-1}
    =
    \abs{u_\Om^{r_q}}^2
    \ge
    c_{m,q}\abs{u_\Om^{r_q}}^2
\]
which holds since $c_{m,q}\le 1$. Altogether,
\[
    \Qdir[\Om]\!\left(u_\Om^m,u_\Om^{q-1}\right)
    \ge
    c_{m,q}\,\Qdir[\Om]\!\left(u_\Om^{r_q}\right).
\]
Since $\Qdir[\Om](u_\Om^{r_q})=\Q(\embom u_\Om^{r_q})$ and
$\embom u_\Om^{r_q}\in C_0(\Omega)\subseteq C_c(X)$, the Sobolev
inequality \SP[\nu] gives
\[
    \Qdir[\Om]\!\left(u_\Om^{r_q}\right)
    \ge
    \frac{1}{C_\nu}\,
    \norm{\embom u_\Om^{r_q}}{\nu}^2
    =
    \frac{1}{C_\nu}
    \left(\sum_{x\in\Om}\abs{u_\Om(x)}^{s_q}\,\mu(x)\right)^{2/\nu}
    =
    \frac{1}{C_\nu}\,
    \norm{u_\Om}{s_q}^{2r_q}
\]
where we used $s_q=\nu r_q$. Collecting the estimates so far, we get
\begin{equation}\label{eq:pre-interpolation}
    Y_{q,\Om}(\projom g)-Y_{q,\Om}(u_\Om)
    \ge
    \lambda K_{m,q,\nu}\norm{u_\Om}{s_q}^{2r_q}.
\end{equation}

If $u_\Om=0$, then \eqref{eq:finite-resolvent-energy} is trivial, so assume $u_\Om\neq0$. Since $1\le q\le s_q$, the standard interpolation inequality gives
\[
    \norm{u_\Om}{q}
    \le
    \norm{u_\Om}{1}^a\,
    \norm{u_\Om}{s_q}^{1-a}
    \qquad \text{and} \qquad
    a=\frac{s_q-q}{q(s_q-1)}\in[0,1).
\]
Raising to the power $2r_q/(1-a)$ and using the identities
\[
    1-a=\frac{s_q(q-1)}{q(s_q-1)},
    \qquad
    \frac{2r_q}{1-a}
    =\frac{2r_q}{s_q}\cdot\frac{q(s_q-1)}{q-1}
    =q\,\delta_q
    \qquad \text{and} \qquad
    \frac{2r_q\,a}{1-a}
    =\frac{2r_q}{s_q}\cdot\frac{s_q-q}{q-1}
    =\gamma_q
\]
which follow from $2r_q/s_q=2/\nu$ and \eqref{eq:sq-identities}, we obtain
\[
    \norm{u_\Om}{s_q}^{2r_q}
    \ge
    \norm{u_\Om}{1}^{-\gamma_q}\,
    Y_{q,\Om}(u_\Om)^{\delta_q}.
\]
By Lemma~\ref{lem:finite-dirichlet-resolvent}\textup{(a)},
$\norm{u_\Om}{1}\le\norm{\projom g}{1}\le\norm{g}{1}\le M$ and, since $\gamma_q\ge 0$,
\[
    \norm{u_\Om}{s_q}^{2r_q}
    \ge
    M^{-\gamma_q}\,
    Y_{q,\Om}(u_\Om)^{\delta_q}
\]
which, combined with \eqref{eq:pre-interpolation}, proves \eqref{eq:finite-resolvent-energy}.

\medskip
\textbf{Step 2: The resolvent estimate on $X$.} We claim that, for every $\lambda>0$ and every $g\in\ellp1\cap\ellp q$ with $\norm{g}{1}\le M$, we have $J_\lambda g\in\ellp q$ and
\begin{equation}\label{eq:resolvent-energy}
    Y_q(J_\lambda g)
    +
    \lambda K_{m,q,\nu}M^{-\gamma_q}\,Y_q(J_\lambda g)^{\delta_q}
    \le
    Y_q(g).
\end{equation}
Using the fixed exhaustion \((X_n)\) selecting \(\mathcal A\), let
$u_n\in C(X_n)$ be the finite Dirichlet solutions furnished by
Lemma~\ref{lem:finite-dirichlet-resolvent}\textup{(a)} with
\(\Om=X_n\), and let $v_n:=\embn u_n$ be their zero extensions. By
Lemma~\ref{lem:finite-dirichlet-resolvent}\textup{(b)},
$v_n\to J_\lambda g$ in $\ellone$ and hence pointwise on $X$.
Setting $g_n:=g\,\mathds{1}_{X_n}$, we have
\[
    Y_q(v_n)=Y_{q,X_n}(u_n)
    \qquad \text{and} \qquad
    Y_{q,X_n}(\projn g)=Y_q(g_n)\le Y_q(g),
\]
so \eqref{eq:finite-resolvent-energy} reads
\[
    F_\lambda\left(Y_q(v_n)\right)\le Y_q(g_n),
    \qquad\mbox{where}\quad
    F_\lambda(z):=z+\lambda K_{m,q,\nu}M^{-\gamma_q}z^{\delta_q}.
\]
By Fatou's lemma, $Y_q(J_\lambda g)\le\liminf_n Y_q(v_n)\le\liminf_nY_q(g_n)\le Y_q(g)<\infty$, in particular, $J_\lambda g\in\ellp q$. Since $\delta_q>0$, the function $F_\lambda$ is continuous and increasing on $[0,\infty)$ and $Y_q(g_n)\to Y_q(g)$ by dominated convergence, therefore,
\[
    F_\lambda\left(Y_q(J_\lambda g)\right)
    \le
    \liminf_{n\to\infty}F_\lambda\left(Y_q(v_n)\right)
    \le
    \lim_{n\to\infty}Y_q(g_n)
    =
    Y_q(g),
\]
which is \eqref{eq:resolvent-energy}.

\medskip
\textbf{Step 3: The discrete energy inequality.} Fix $T>0$, $\eps>0$, a partition $0=t_0<\cdots<t_N=T$ of $[0,T]$ with mesh $\lambda_{k}=t_k-t_{k-1}$ at most $\eps$, and let $u_\eps$ be the $\eps$-approximate solution given by the resolvent chain \eqref{eq:resolvent-chain}, that is, $u_{\eps,k}=J_{\lambda_k}u_{\eps,k-1}$ with $u_{\eps,0}=u_0$. By
Lemma~\ref{lem:finite-dirichlet-resolvent}\textup{(c)},
\[
    \norm{u_{\eps,k}}{1}\le\norm{u_0}{1}=M
    \qquad\mbox{for every } k.
\]
Hence, by Step~2 and induction on $k$, we get $u_{\eps,k}\in\ellp1\cap\ellp q$ and
\begin{equation}\label{eq:discrete-energy}
    Y_{\eps,k}
    +
    K_{m,q,\nu}M^{-\gamma_q}\lambda_k\,Y_{\eps,k}^{\delta_q}
    \le
    Y_{\eps,k-1}
    \qquad \text{where }  
    Y_{\eps,k}:=Y_q(u_{\eps,k}).
\end{equation}
In particular, $Y_{\eps,k}\le Y_{\eps,k-1}\le\cdots\le Y_q(u_0)$ for every $k$.

\medskip
\textbf{Step 4: Passage to the limit.} Write $Y_\eps(t):=Y_q(u_\eps(t))$, so that $Y_\eps(0)=Y_q(u_0)$ and $Y_\eps(t)=Y_{\eps,l}$ for $t\in(t_{l-1},t_l]$. Given $t\in(0,T]$, let $l$ be such that $t\in(t_{l-1},t_l]$. Since $Y_\eps$ is piecewise constant and $t-t_{l-1}\le\lambda_l$, we get
\[
    \int_0^t Y_\eps(\tau)^{\delta_q}\,d\tau
    =
    \sum_{k=1}^{l-1}\lambda_k Y_{\eps,k}^{\delta_q}
    +(t-t_{l-1})Y_{\eps,l}^{\delta_q}
    \le
    \sum_{k=1}^{l}\lambda_k Y_{\eps,k}^{\delta_q}
\]
and summing \eqref{eq:discrete-energy} over $k=1,\ldots,l$ yields
\begin{equation}\label{eq:eps-energy}
    Y_\eps(t)
    +
    K_{m,q,\nu}M^{-\gamma_q}
    \int_0^t Y_\eps(\tau)^{\delta_q}\,d\tau
    \le
    Y_q(u_0)
    \qquad\mbox{for every } t\in[0,T].
\end{equation}
We now let $\eps\downarrow0$. By \eqref{eq:chain-convergence}, for every fixed
$\tau\in[0,T)$ we have $u_\eps(\tau)\to u(\tau)$ in $\ellone$, hence
pointwise on $X$. Therefore, Fatou's lemma yields
\[
    Y_q(u(\tau))\le\liminf_{\eps\downarrow0}Y_\eps(\tau).
\]
Since $z\mapsto z^{\delta_q}$ is continuous and increasing, also $Y_q(u(\tau))^{\delta_q}\le\liminf_{\eps\downarrow0}Y_\eps(\tau)^{\delta_q}$, and Fatou's lemma in the time variable gives, for every $t\in[0,T)$,
\[
    \int_0^t Y_q(u(\tau))^{\delta_q}\,d\tau
    \le
    \liminf_{\eps\downarrow 0}
    \int_0^t Y_\eps(\tau)^{\delta_q}\,d\tau.
\]
By the superadditivity of the limit inferior and \eqref{eq:eps-energy},
\begin{equation}\label{eq:integral-zero}
    Y_q(u(t))
    +
    K_{m,q,\nu}M^{-\gamma_q}
    \int_0^t Y_q(u(\tau))^{\delta_q}\,d\tau
    \le
    Y_q(u_0).
\end{equation}
Since $T>0$ was arbitrary, \eqref{eq:integral-zero} holds for every $t\ge0$. In particular, $u(t)\in\ellp q$ and $Y_q(u(t))\le Y_q(u_0)$ for every $t\ge0$.

\medskip
\textbf{Step 5: Restarting at time $s$.} Let $0\le s\le t$. By Lemma~\ref{lem:translation}, $u(s+\cdot)$ is the mild solution with initial datum $u(s)$, and $u(s)\in\ellp1\cap\ellp q$ by Step~4. Set $M_s:=\norm{u(s)}{1}$, so that $M_s\le M$ by \eqref{eq:l1-decay}. If $M_s=0$, then $u(s)=0$, hence $u(\tau)=0$ for every $\tau\ge s$ and \eqref{eq:energy-integral} is trivial. If $M_s>0$, then Steps~1--4 applied with $u(s)$ in place of $u_0$ and $M_s$ in place of $M$, give
\[
    Y_q(u(t))
    +
    K_{m,q,\nu}M_s^{-\gamma_q}
    \int_s^t Y_q(u(\tau))^{\delta_q}\,d\tau
    \le
    Y_q(u(s)),
\]
after the change of variables $\tau\mapsto s+\tau$ in the integral. Since $\gamma_q\ge0$ and $M_s\le M$, we have
$M_s^{-\gamma_q}\ge M^{-\gamma_q}$ and \eqref{eq:energy-integral} follows.
This completes the proof.
\end{proof}
The extinction and smoothing estimates below follow by combining
\eqref{eq:energy-integral} with the scalar comparison principle in
Lemma~\ref{lem:ode-comparison} from Appendix~\ref{sec:appendix}.

\subsection{Extinction and smoothing}\label{ssec:ext-smooth}

We now derive the extinction and smoothing
consequences of the preceding energy estimate.
In the first result, we show extinction in the range
$0<m<{2}/{\nu}$.
\begin{theorem}[Extinction]\label{thm:flexible-extinction}
Assume \SP[\nu] for some $\nu>2$ and let
\[
    0<m<\frac{2}{\nu}
    \qquad\mbox{so that}\qquad
    \alpha=\frac{\nu(1-m)}{\nu-2}>1.
\]
Let $q\ge\alpha$ and $u_0\in\ellp{1}\cap\ellp{q}$ with $M:=\norm{u_0}{1}>0$. Then, $\delta_q<1$, $\gamma_q\ge0$, and the mild solution $u$ of the \eqref{eq:C-D}
satisfies
\begin{equation}\label{eq:flexible-extinction-estimate}
    \norm{u(t)}{q}^q
    \le
    \left[
        \norm{u_0}{q}^{q(1-\delta_q)}
        -
        K_{m,q,\nu}(1-\delta_q)M^{-\gamma_q}\,t
    \right]_+^{\frac{1}{1-\delta_q}}
    \qquad\mbox{for every } t\ge0.
\end{equation}
In particular, $u$ has finite extinction time:
\[
    u(t,\cdot)= 0
    \qquad\mbox{for every }\;
    t\ \ge\
    \frac{M^{\gamma_q}\,\norm{u_0}{q}^{q(1-\delta_q)}}{K_{m,q,\nu}\,(1-\delta_q)}.
\]
\end{theorem}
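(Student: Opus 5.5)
The plan is to combine the integral energy inequality of Lemma~\ref{lem:energy-mild} with a scalar ODE comparison. First I check the exponents. Since $0<m<2/\nu$, we have $\nu m-2<0$, so $\delta_q<1$ directly from \eqref{eq:exponents}. Also $\alpha=\nu(1-m)/(\nu-2)>1$ is equivalent to $\nu-\nu m>\nu-2$, i.e.\ $m<2/\nu$. Because $q\ge\alpha>1$, the properties listed after \eqref{eq:constants} give $\gamma_q\ge0$ and $\delta_q>0$. Hence Lemma~\ref{lem:energy-mild} applies. Writing $Y(t):=\norm{u(t)}{q}^q$ and $c:=K_{m,q,\nu}M^{-\gamma_q}>0$, it yields
\[
Y(t)+c\int_s^t Y(\tau)^{\delta_q}\,d\tau\le Y(s)\qquad\text{for all }0\le s\le t.
\]

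Next I extract the explicit decay. In particular, $Y$ is nonincreasing, and I want to compare it with the solution $Z(t)=[Y(0)^{1-\delta_q}-c(1-\delta_q)t]_+^{1/(1-\delta_q)}$ of $Z'=-cZ^{\delta_q}$, $Z(0)=Y(0)$. I would invoke Lemma~\ref{lem:ode-comparison} from the appendix, which presumably is exactly this scalar comparison for functions satisfying the integral inequality on every subinterval.

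If I had to argue directly, I would proceed as follows. Since $Y$ is nonincreasing, it is differentiable a.e.; the hard part is that it need not be absolutely continuous. So instead set $G(t):=Y(t)^{1-\delta_q}$ on $\{Y>0\}$. For $s<t$ with $Y(t)>0$, monotonicity of $Y$ gives $Y(\tau)^{\delta_q}\ge Y(t)^{\delta_q}$ on $[s,t]$, so
\[
Y(s)-Y(t)\ge c\,(t-s)\,Y(t)^{\delta_q}.
\]
The concavity of $z\mapsto z^{1-\delta_q}$ then gives
\[
G(s)-G(t)\ge(1-\delta_q)Y(s)^{-\delta_q}\bigl(Y(s)-Y(t)\bigr)\ge c(1-\delta_q)(t-s)\bigl(Y(t)/Y(s)\bigr)^{\delta_q}.
\]
I would partition $[0,t]$ finely and sum. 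The ratio factor tends to $1$ except near jumps, and a monotone function has only countably many jumps, across which the inequality only improves. Refining the mesh yields $G(t)\le G(0)-c(1-\delta_q)t$. This bookkeeping is the main obstacle, and the appendix lemma is meant to absorb it; I would cite it rather than redo it.

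Finally, once $Y(t_0)=0$, monotonicity gives $Y\equiv0$ afterwards, which is consistent with the positive part in \eqref{eq:flexible-extinction-estimate}. This proves \eqref{eq:flexible-extinction-estimate}. The bracket vanishes once
\[
t\ge \frac{M^{\gamma_q}\norm{u_0}{q}^{q(1-\delta_q)}}{K_{m,q,\nu}(1-\delta_q)},
\]
so $\norm{u(t)}{q}=0$ there. Since $\mu>0$ pointwise, this means $u(t,\cdot)=0$.
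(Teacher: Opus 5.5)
Your proof is correct and is the paper's argument. You check $\delta_q<1$ and $\gamma_q\ge0$, feed the integral inequality of Lemma~\ref{lem:energy-mild} with $C=K_{m,q,\nu}M^{-\gamma_q}$ into Lemma~\ref{lem:ode-comparison}\,(i), and use $\mu>0$ to pass from $Y(t)=0$ to $u(t,\cdot)=0$. The appendix lemma also needs $Y$ measurable; the paper gets this from lower semicontinuity via Fatou, and it equally follows from the monotonicity of $Y$ that you note.

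One correction to your optional direct sketch: jumps of $Y$ do not ``improve'' the estimate, since there the factor $(Y(t_k)/Y(t_{k-1}))^{\delta_q}$ stays strictly below $1$. What actually closes the partition argument is $1-\rho_k^{\delta_q}\le 1-\rho_k\le (Y(t_{k-1})-Y(t_k))/Y(t)$ for $\rho_k:=Y(t_k)/Y(t_{k-1})$, so the total loss is at most the mesh times $Y(0)/Y(t)$, which tends to $0$.
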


\begin{proof}
Since $m<2/\nu$, we have $1-m>1-2/\nu=(\nu-2)/\nu$, whence $\alpha>1$. Moreover, $\nu m-2<0$, so $\delta_q<1$ by \eqref{eq:exponents} and $q\ge\alpha$ gives $\gamma_q\ge0$.

Set $Y(t):=\norm{u(t)}{q}^q$. The function $Y$ is measurable: indeed, it is lower semicontinuous, because if $t_n\to t$, then $u(t_n)\to u(t)$ in $\ellone$ and, hence, pointwise on $X$, so that Fatou's lemma applies. By Lemma~\ref{lem:energy-mild}, $Y$ satisfies the hypothesis of Lemma~\ref{lem:ode-comparison} with
\[
    C=K_{m,q,\nu}M^{-\gamma_q},
    \qquad
    \delta=\delta_q<1,
\]
and Lemma~\ref{lem:ode-comparison}~(i) yields \eqref{eq:flexible-extinction-estimate}, together with $Y(t)=0$ for every $t\ge Y(0)^{1-\delta_q}/(C(1-\delta_q))$, which is the stated bound on the extinction time. Finally, if $Y(t)=0$, then $u(t,x)=0$ for every $x\in X$, because $\mu(x)>0$ for every vertex.
\end{proof}

\begin{remark}[The critical choice $q=\alpha$]\label{rem:critical-choice}
For $q=\alpha$, one has
\[
    \gamma_\alpha=0,
    \qquad
    \delta_\alpha=\frac{2}{\nu}
    \qquad \text{and} \qquad
    \alpha\left(1-\delta_\alpha\right)=1-m.
\]
Thus, Theorem~\ref{thm:flexible-extinction} gives the extinction estimate
\[
    \norm{u(t)}{\alpha}^{\alpha}
    \le
    \left[
        \norm{u_0}{\alpha}^{1-m}
        -
        K_{m,\alpha,\nu}
        \left(1-\frac{2}{\nu}\right)t
    \right]_+^{\frac{\nu}{\nu-2}}
\]
in which the coefficient of $t$ does not depend on $u_0$, and the extinction time is bounded by
\[
    \frac{\nu\, C_\nu}{(\nu-2)\,\alpha\, c_{m,\alpha}}\,
    \norm{u_0}{\alpha}^{1-m}.
\]
This is the graph counterpart of the Euclidean critical-line choice
\(p=p_*\); compare \cite[Theorem~5.2 and Corollaries~5.3~and~5.4]{vazquez2006smoothing}
and the Riemannian extinction-time estimates in
\cite[Proposition~4.6 and Theorem~4.8]{bianchi2018laplacian}.
\end{remark}

We next show that the critical extinction estimate of Remark~\ref{rem:critical-choice} extends to the minimal positive pointwise solution constructed in Corollary~\ref{cor:canonical-minimal-nonnegative-solution}, even for initial data in $\ellp{\alpha}\cap\ell^\infty(X)$ which need not belong to $\ellone$.
\begin{corollary}
\label{cor:minimal-pointwise-extinction}
Assume \SP[\nu] for some $\nu>2$, let
\[
    0<m<\frac{2}{\nu}
    \qquad\mbox{and}\qquad
    \alpha:=\frac{\nu(1-m)}{\nu-2}>1.
\]
Let
\[
    0\le u_0\in\ellp{\alpha}\cap\ell^\infty(X),
\]
and let $\underline u^0$ be the minimal positive global pointwise solution
of the homogeneous problem given by
Corollary~\ref{cor:canonical-minimal-nonnegative-solution}. Then
\[
    \norm{\underline u^0(t)}{\alpha}^{\alpha}
    \le
    \left[
        \norm{u_0}{\alpha}^{1-m}
        -
        K_{m,\alpha,\nu}
        \left(1-\frac{2}{\nu}\right)t
    \right]_+^{\frac{\nu}{\nu-2}}
    \qquad\mbox{for every }t\ge0.
\]
In particular,
\[
    \underline u^0(t,\cdot)=0
    \qquad\mbox{for every }\;
    t\ge
    \frac{\nu C_\nu}
         {(\nu-2)\alpha c_{m,\alpha}}\,
    \norm{u_0}{\alpha}^{1-m}.
\]
\end{corollary}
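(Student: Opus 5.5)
Our approach is to prove the estimate at the level of the finite-domain approximations $u_n:=u_n^{0,0}$ used in the construction of $\underline u^0$, and then pass to the limit. With $A=0$ the exterior datum is $a\equiv0$, so each $u_n$ solves the zero-Dirichlet problem $\partial_t u_n+\Deltadir[n]u_n^m=0$ on $\Omega_n$ with $u_n(0)=u_0|_{\Omega_n}$, and $0\le u_n\le\|u_0\|_\infty$. By Step~5 of the proof of Theorem~\ref{thm:extremal-bounded-pointwise-solutions}, $\embn u_n$ increases pointwise to $\underline u^0$. Since $\Omega_n$ is finite, $u_n$ is a genuine $C^1$ solution of a finite ODE system, so the energy can be differentiated directly rather than going through the resolvent chain.

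\textbf{Step 1 (finite energy identity).} Set $Y_n(t):=\sum_{x\in\Omega_n}u_n(t,x)^\alpha\mu(x)$. Since $u_n\ge0$ and $\alpha>1$, $Y_n$ is $C^1$ on $(0,\infty)$ and continuous at $0$. By Proposition~\ref{prop:greenf},
\[
Y_n'(t)=-\alpha\,\Qdir[\Omega_n]\bigl(u_n^m,u_n^{\alpha-1}\bigr).
\]
Exactly as in Step~1 of Lemma~\ref{lem:energy-mild}, Lemma~\ref{lem:signed-power-ineq} together with $c_{m,\alpha}\le1$ for the boundary and killing terms gives
\[
\Qdir[\Omega_n](u_n^m,u_n^{\alpha-1})\ge c_{m,\alpha}\Qdir[\Omega_n](u_n^{r_\alpha}).
\]
Then \SP[\nu] bounds this from below by $C_\nu^{-1}\|u_n\|_{s_\alpha}^{2r_\alpha}$. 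For $q=\alpha$ one has $s_\alpha=\alpha$ and $2r_\alpha/\alpha=2/\nu=\delta_\alpha$. No interpolation with the $\ell^1$ norm is needed, which is precisely why $\ell^1$ data is unnecessary here. Hence $Y_n'\le -K_{m,\alpha,\nu}Y_n^{2/\nu}$, and integrating (or applying Lemma~\ref{lem:ode-comparison}(i)) yields
\[
Y_n(t)\le\bigl[Y_n(0)^{1-2/\nu}-K_{m,\alpha,\nu}(1-2/\nu)t\bigr]_+^{\nu/(\nu-2)}.
\]
Finally, using $\alpha(1-2/\nu)=1-m$ and $Y_n(0)\le\|u_0\|_\alpha^\alpha$, the right-hand side is bounded by the claimed expression with $\|u_0\|_\alpha^{1-m}$. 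This bound is monotone in $Y_n(0)$, so replacing $Y_n(0)$ by the larger quantity is legitimate.

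\textbf{Step 2 (limit).} Since $\embn u_n(t)\uparrow\underline u^0(t)$ pointwise, monotone convergence (or Fatou) gives $\|\underline u^0(t)\|_\alpha^\alpha\le\liminf_n Y_n(t)$. The bound from Step~1 is uniform in $n$, so it transfers to $\underline u^0$. Vanishing of the bracket then forces $\underline u^0(t,\cdot)=0$ because $\mu>0$. The stated time follows from
\[
K_{m,\alpha,\nu}(1-2/\nu)=\alpha c_{m,\alpha}(\nu-2)/(\nu C_\nu).
\]

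\textbf{Main obstacle.} The main point to check carefully is the differentiation in Step~1 when $u_n$ touches zero, where $u_n^{\alpha-1}$ and $u_n^m$ with $m<1$ are not smooth. Since $\alpha>1$, the map $s\mapsto s^\alpha$ is $C^1$ on $[0,\infty)$, so $Y_n$ is differentiable with the stated derivative even at zeros. The identity $\partial_t u_n^\alpha=\alpha u_n^{\alpha-1}\partial_t u_n$ holds everywhere, and nonnegativity of $u_n$ comes from the comparison in Step~4 of Theorem~\ref{thm:extremal-bounded-pointwise-solutions}. If needed, one can instead argue via absolute continuity as in the proof of Theorem~\ref{thm:comparison}.
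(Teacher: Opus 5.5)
Your proof is correct, but it takes a more direct route than the paper. Both arguments share the outer structure: estimate the finite zero-Dirichlet approximations $\embn u_n^0$ uniformly in $n$, then pass to the limit by monotone convergence.

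The difference is in how the finite-domain bound is obtained.
\begin{itemize}
\item The paper does not differentiate anything. It regards $\Omega_n$ as a finite graph with the enlarged killing term $\kappa_n(x)=\kappa(x)+\sum_{y\notin\Omega_n}w(x,y)$. It observes that $\Qdir[\Omega_n](v)=\Q(\embn v)$, so this finite graph inherits \SP[\nu] with the same constant. It identifies the classical solution $u_n^0$ with the mild solution on that graph, using that classical solutions are mild and the uniqueness in Theorem~\ref{thm:existence-mild}. It then invokes Theorem~\ref{thm:flexible-extinction} with the critical choice $q=\alpha$ of Remark~\ref{rem:critical-choice}, where $\gamma_\alpha=0$ makes the constant independent of the (automatically finite) $\ell^1$ norm.
\item You instead redo Step~1 of Lemma~\ref{lem:energy-mild} in continuous time along the $C^1$ finite-dimensional flow. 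You get $Y_n'\le -K_{m,\alpha,\nu}Y_n^{2/\nu}$ directly from Green's formula, Lemma~\ref{lem:signed-power-ineq} and the Sobolev inequality, and $s_\alpha=\alpha$ removes the interpolation step.
\end{itemize}

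What each route buys:
\begin{itemize}
\item The paper's proof is shorter on the page because it recycles the mild-solution theorem. However, it leans on the full accretive machinery (Crandall--Liggett approximation, the m-accretive realization behind Theorem~\ref{thm:existence-mild}, and the classical-implies-mild result) for what is really a finite ODE system.
\item Your proof is self-contained and elementary, and needs no identification of solution concepts. It also shows transparently why no $\ell^1$ assumption on $u_0$ is needed.
\end{itemize}

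Your treatment of the chain rule at zeros is exactly what is required: since $\alpha>1$, $s\mapsto|s|^\alpha$ is $C^1$ on $\mathbb{R}$. Your constant bookkeeping also checks out: $\alpha(1-2/\nu)=1-m$ and $K_{m,\alpha,\nu}(1-2/\nu)=\alpha c_{m,\alpha}(\nu-2)/(\nu C_\nu)$.
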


\begin{proof}
If $u_0=0$, the conclusion is immediate. Let $(\Omega_n)$ be the finite
connected exhaustion used in the construction of $\underline u^0$, and
write
\[
    u_n^0:=u_n^{0,0},
    \qquad
    U_n^0:=U_n^{0,0}=\embn u_n^0
\]
for the corresponding zero-Dirichlet solutions and their zero
extensions.

Regard $\Omega_n$ as the finite graph obtained by restricting $w$ and
$\mu$ to $\Omega_n$ and introducing the killing term
\[
    \ka_n(x):=\ka(x)+\sum_{y\in X\setminus\Omega_n}w(x,y),
    \qquad x\in\Omega_n.
\]
Its Laplacian is $\Deltadirn$, while its energy form satisfies
\[
    \Qdir[\Omega_n](v)=\Q(\embn v),
    \qquad v\in C(\Omega_n).
\]
Consequently, this finite graph satisfies \SP[\nu] with the same
constant $C_\nu$.

The finite-dimensional solution $u_n^0$ is classical and hence mild;
by the uniqueness in Theorem~\ref{thm:existence-mild}, it is the mild
solution on this finite graph with initial datum $\projn u_0$.
For every $n$ such that $\projn u_0\ne0$,
Theorem~\ref{thm:flexible-extinction} and
Remark~\ref{rem:critical-choice} therefore give
\[
    \norm{U_n^0(t)}{\alpha}^{\alpha}
    \le
    \left[
        \norm{\projn u_0}{\alpha}^{1-m}
        -
        K_{m,\alpha,\nu}
        \left(1-\frac{2}{\nu}\right)t
    \right]_+^{\frac{\nu}{\nu-2}}.
\]
If $\projn u_0=0$, the same estimate is immediate because $U_n^0=0$.

By \eqref{eq:forced-lower-exhaustion-monotonicity} and
\eqref{eq:def-forced-lower-extremal-limit},
\[
    0\le U_n^0\uparrow\underline u^0
    \qquad\mbox{pointwise on }[0,\infty)\times X.
\]
Moreover,
\[
    \norm{\projn u_0}{\alpha}
    \uparrow
    \norm{u_0}{\alpha}.
\]
Thus, monotone convergence yields the asserted estimate for
$\underline u^0$. The extinction-time bound follows because its
right-hand side vanishes at the stated time.
\end{proof}

We now turn to the complementary range \(m>2/\nu\). In this regime, mild
solutions with \(\ell^1\) initial data instantaneously belong to \(\ell^q\)
and satisfy quantitative smoothing bounds.

\begin{theorem}[$\ell^1$-$\ell^q$ smoothing]\label{thm:smoothing}
Assume \SP[\nu] for some $\nu>2$ and let
\[
    m>\frac{2}{\nu}.
\]
Let $u_0\in\ellp{1}$ with $M:=\norm{u_0}{1}$ and let $q\in(1,\infty)$. Then the mild solution $u$ satisfies $u(t)\in\ellp{q}$ for every $t>0$ and
\begin{equation}\label{eq:smoothing-norm}
    \norm{u(t)}{q}
    \le
    \mathcal C_{m,q,\nu}\,
    t^{-\theta_q}\,
    M^{\sigma_q}
    \qquad\mbox{for every } t>0,
\end{equation}
where
\[
    \theta_q:=\frac{\nu(q-1)}{q(\nu m-2)},
    \qquad
    \sigma_q:=\frac{(\nu-2)q-\nu(1-m)}{q(\nu m-2)}
    \qquad \text{and} \qquad
    \mathcal C_{m,q,\nu}
    :=
    \left[
        K_{m,q,\nu}\,
        \frac{\nu m-2}{\nu(q-1)}
    \right]^{-\theta_q}.
\]
\end{theorem}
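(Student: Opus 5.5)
The plan is to use the energy inequality of Lemma~\ref{lem:energy-mild} together with the superlinear branch of the scalar comparison Lemma~\ref{lem:ode-comparison}. The one obstacle is that the lemma needs \(u_0\in\ellp{q}\) and \(q\ge\alpha\), whereas here \(u_0\) is only in \(\ellp{1}\). Since \(m>2/\nu\), we have \(\alpha=\nu(1-m)/(\nu-2)<1\), so every \(q>1\) satisfies \(q\ge\alpha\). Hence \(\gamma_q>0\) and \(\delta_q>1\), and the only issue is integrability of the datum.

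\emph{Bounded data first.} Assume \(u_0\in\ellp{1}\cap\ellp{q}\) with \(M>0\), and set \(Y(t):=\norm{u(t)}{q}^q\). As in the proof of Theorem~\ref{thm:flexible-extinction}, \(Y\) is lower semicontinuous, hence measurable. Lemma~\ref{lem:energy-mild} shows that \(Y\) is nonincreasing and satisfies
\[
Y(t)+C\int_s^tY(\tau)^{\delta_q}\,d\tau\le Y(s),\qquad C:=K_{m,q,\nu}M^{-\gamma_q},\quad \delta_q>1.
\]
Comparison with the ODE \(y'=-Cy^{\delta_q}\) (Lemma~\ref{lem:ode-comparison}, superlinear case) gives
\[
Y(t)\le\bigl[Y(0)^{1-\delta_q}+C(\delta_q-1)t\bigr]^{-1/(\delta_q-1)}\le\bigl[C(\delta_q-1)t\bigr]^{-1/(\delta_q-1)},
\]
which no longer depends on \(Y(0)\). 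If that case of the appendix lemma were unavailable, I would argue directly: monotonicity gives \(Y(t)+C(t-s)Y(t)^{\delta_q}\le Y(s)\), and the same bound follows by applying this on a fine partition and passing to the limit.

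\emph{Exponents.} Note that \(\delta_q-1=(\nu m-2)/(\nu(q-1))\). Taking the \(1/q\)-th power, the power of \(t\) is \(-1/(q(\delta_q-1))=-\theta_q\), and the constant is \([K_{m,q,\nu}(\nu m-2)/(\nu(q-1))]^{-\theta_q}=\mathcal C_{m,q,\nu}\). The power of \(M\) is \(\gamma_q/(q(\delta_q-1))=(\nu-2)(q-\alpha)/(q(\nu m-2))\). Using \((\nu-2)\alpha=\nu(1-m)\), this equals \(\sigma_q\). This is routine algebra.

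\emph{General \(\ell^1\) data.} For \(u_0\in\ellp{1}\), approximate by truncations \(u_{0,k}:=\max\{-k,\min\{u_0,k\}\}\in\ellp{1}\cap\ellp{\infty}\subseteq\ellp{q}\). These satisfy \(\norm{u_{0,k}}{1}\le M\) and \(u_{0,k}\to u_0\) in \(\ellp{1}\). If some \(u_{0,k}=0\), its solution is zero and the bound is trivial. By the contraction estimate \eqref{eq:l1-contraction}, \(u_k(t)\to u(t)\) in \(\ellp{1}\), hence pointwise on \(X\). Fatou's lemma then transfers the bound \(\norm{u_k(t)}{q}\le\mathcal C t^{-\theta_q}\norm{u_{0,k}}{1}^{\sigma_q}\) to the limit. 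This uses \(\sigma_q\ge0\), which holds because \(q>\alpha\) gives \(\sigma_q>0\), so that \(\norm{u_{0,k}}{1}^{\sigma_q}\le M^{\sigma_q}\). This shows \(u(t)\in\ellp{q}\) for \(t>0\), and the case \(M=0\) is trivial.
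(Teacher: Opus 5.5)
Your proposal is correct and follows the paper's proof essentially step for step. Both apply Lemma~\ref{lem:energy-mild} with case (ii) of Lemma~\ref{lem:ode-comparison} to data in $\ell^1\cap\ell^q$, then pass to general $\ell^1$ data via the $\ell^1$-contraction and Fatou's lemma, with the same exponent algebra. The only cosmetic differences are that the paper truncates the support ($u_0\mathds{1}_{X_n}$) rather than the values, and it uses $\gamma_q>0$ to replace $M_n^{-\gamma_q}$ by $M^{-\gamma_q}$ before the limit, where you use the equivalent fact $\sigma_q>0$.
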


\begin{proof}
If $M=0$, then $u=0$ and there is nothing to prove. Therefore, assume $M>0$. Since $m>2/\nu$, we have $\nu(1-m)<\nu-2$, whence $\alpha<1<q$, consequently, $\gamma_q>0$ and, by \eqref{eq:exponents}, $\delta_q>1$ with
\begin{equation}\label{eq:delta-minus-one}
    \delta_q-1=\frac{\nu m-2}{\nu(q-1)}.
\end{equation}

\textbf{Step 1: The case $u_0\in\ellp{1}\cap\ellp{q}$.} Since $q>\alpha$, Lemma~\ref{lem:energy-mild} applies and, arguing exactly as in the proof of Theorem~\ref{thm:flexible-extinction}, so does Lemma~\ref{lem:ode-comparison}, now through case~(ii). With $Y(t):=\norm{u(t)}{q}^q$, $C=K_{m,q,\nu}M^{-\gamma_q}$ and $\delta=\delta_q$, we obtain
\begin{equation}\label{eq:smoothing-Y}
    \norm{u(t)}{q}^q
    \le
    \left[
        K_{m,q,\nu}(\delta_q-1)M^{-\gamma_q}\,t
    \right]^{-\frac{1}{\delta_q-1}}
    \qquad\mbox{for every } t>0.
\end{equation}

\textbf{Step 2: The general case $u_0\in\ellp{1}$.} Let $(X_n)$ be an exhaustion of $X$ by finite sets and define
\[
    u_{0,n}:=u_0\mathds{1}_{X_n}
    \in C_c(X)\subseteq \ellp{1}\cap\ellp{q}
\]
so that $u_{0,n}\to u_0$ in $\ellp{1}$ and $M_n:=\norm{u_{0,n}}{1}\le M$. Let $u_n$ be the mild solution with initial datum $u_{0,n}$. If $M_n=0$, then $u_n=0$. Otherwise, Step~1 applies to $u_n$ and, since $\gamma_q>0$ and $M_n\le M$ imply $M_n^{-\gamma_q}\ge M^{-\gamma_q}$, in both cases
\[
    \norm{u_n(t)}{q}^q
    \le
    \left[
        K_{m,q,\nu}(\delta_q-1)M^{-\gamma_q}\,t
    \right]^{-\frac{1}{\delta_q-1}}
    \qquad\mbox{for every } t>0.
\]
By the contraction estimate \eqref{eq:l1-contraction},
\[
    \norm{u_n(t)-u(t)}{1}
    \le
    \norm{u_{0,n}-u_0}{1}
    \to 0
    \qquad\mbox{for every } t\ge 0.
\]
Hence, for every fixed $t>0$, after passing to a subsequence, $u_n(t,x)\to u(t,x)$ for every $x\in X$. 
Fatou's lemma then gives
\[
    \norm{u(t)}{q}^q
    \le
    \liminf_{n\to\infty}\norm{u_n(t)}{q}^q
    \le
    \left[
        K_{m,q,\nu}(\delta_q-1)M^{-\gamma_q}\,t
    \right]^{-\frac{1}{\delta_q-1}}.
\]
Thus, $u(t)\in\ellp{q}$ for every $t>0$ and \eqref{eq:smoothing-Y} holds for every $u_0\in\ellp{1}$.

\textbf{Step 3: Conclusion.} Taking the $q$-th root in \eqref{eq:smoothing-Y} and using \eqref{eq:delta-minus-one} together with
\[
    \frac{1}{q(\delta_q-1)}
    =
    \frac{\nu(q-1)}{q(\nu m-2)}
    =\theta_q
    \qquad \text{and} \qquad
    \frac{\gamma_q}{q(\delta_q-1)}
    =
    \frac{(\nu-2)q-\nu(1-m)}{q(\nu m-2)}
    =\sigma_q,
\]
we obtain
\[
    \norm{u(t)}{q}
    \le
    \left[
        K_{m,q,\nu}(\delta_q-1)
    \right]^{-\theta_q}
    t^{-\theta_q}
    M^{\sigma_q}
\]
which is \eqref{eq:smoothing-norm}. This completes the proof.
\end{proof}

The estimate \eqref{eq:smoothing-norm} is the graph analogue, under
\SP[\nu], of Euclidean \(L^p\)-\(L^q\) smoothing effects, see
\cite[Theorem~3.6]{vazquez2006smoothing}.

\subsection{Examples: Sobolev inequalities and the Euclidean critical exponent}\label{ssec:sobolev-examples}

A natural question is which graphs satisfy the Sobolev inequality \SP[\nu]. For recent results see~\cite{keller2026gaussian,HuaLi2021DiscreteSobolev} and references therein. In this subsection, we show that Cayley graphs with polynomial volume growth of order $N\ge3$ satisfy
\SP[\nu], with
\[
    \nu=\frac{2N}{N-2}
\]
and that, on such graphs, the extinction/smoothing threshold $2/\nu$ provided by Theorems~\ref{thm:flexible-extinction} and~\ref{thm:smoothing} coincides with the critical exponent
\[
    m_c=\frac{N-2}{N}
\]
of the fast diffusion equation on $\R^N$.
The recovery of \(m_c=(N-2)/N\) is parallel to the Riemannian recovery
of the Euclidean critical exponent under nonnegative Ricci curvature in
\cite[Remark~4.7 and Theorem~4.8]{bianchi2018laplacian}.

We first recall the definition of a Cayley graph, formulated within the framework of Definition~\ref{def:graph}. For background, see, e.g., \cite{Woe00}.

\begin{definition}[Cayley graph]\label{def:cayley}
Let $\Gamma$ be a finitely generated group with identity element $e$ and let $S\subseteq\Gamma\setminus\{e\}$ be a finite generating set which is symmetric, i.e., $S=S^{-1}$. The \emph{Cayley graph} $\operatorname{Cay}(\Gamma,S)$ is the graph $G=(\Gamma,w,\kappa,\mu)$ with vertex set $\Gamma$, unit edge weights, no killing term and counting measure, that is,
\[
    w(x,y):=
    \begin{cases}
        1 & \mbox{if } x^{-1}y\in S\\
        0 & \mbox{otherwise}
    \end{cases}
    \qquad
    \kappa= 0,
    \qquad
    \mu= 1.
\]
\end{definition}

Observe that $w$ is symmetric since $S=S^{-1}$ and that $w(x,x)=0$ since $e\notin S$. Moreover, $\operatorname{Cay}(\Gamma,S)$ is $\abs{S}$-regular, in particular, locally finite and connected, because $S$ generates $\Gamma$. Hence, $\operatorname{Cay}(\Gamma,S)$ is indeed a connected graph in the sense of Definition~\ref{def:graph}. Denoting by
\[
    \abs{g}_S:=\min\left\{n\ge 0 \mid g=s_1\cdots s_n,\ s_1,\ldots,s_n\in S\right\}
\]
the \emph{word length} of $g\in\Gamma$, the combinatorial graph distance of $\operatorname{Cay}(\Gamma,S)$ is given by $d(x,y)=\abs{x^{-1}y}_S$. We write
\[
    V_S(r):=\#\left\{g\in\Gamma \mid \abs{g}_S\le r\right\},
    \qquad r\ge 0,
\]
for the cardinality of the ball of radius $r$ centered at the identity. By the left-invariance of $d$, balls of the same radius centered at different vertices have the same cardinality.

\begin{corollary}\label{cor:cayley-extinction}
Let $G=\operatorname{Cay}(\Gamma,S)$ be the Cayley graph of a finitely generated group $\Gamma$ and assume that, for some $N\ge3$ and $c>0$,
\begin{equation}\label{eq:volume-growth}
    V_S(r)\ge c\, r^N
    \qquad\mbox{for every } r\ge 1.
\end{equation}
Then $G$ satisfies \SP[\nu] with $\nu=2N/(N-2)$, so that $2/\nu=m_c=(N-2)/N$ and for every $u_0\in\ellone$ with $u_0\neq0$ the mild solution $u$ of the homogeneous problem satisfies:
\begin{enumerate}[(i)]
    \item[\textup{(i)}] \emph{(Extinction)} If $0<m<m_c$, then
    \[
        u(t,\cdot)= 0
        \qquad\mbox{for every }\;
        t\ge
        \frac{\nu\, C_\nu}{(\nu-2)\,\alpha\, c_{m,\alpha}}\,
        \norm{u_0}{1}^{1-m}.
    \]
    \item[\textup{(ii)}] \emph{(Smoothing)} If $m>m_c$, then the estimates \eqref{eq:smoothing-norm} hold for every $q\in(1,\infty)$.
\end{enumerate}
\end{corollary}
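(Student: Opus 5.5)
The plan has two parts. First I prove the Sobolev inequality \SP[\nu] with $\nu=2N/(N-2)$ from the volume growth \eqref{eq:volume-growth}. Then I read off (i) and (ii) from Theorem~\ref{thm:flexible-extinction}, Remark~\ref{rem:critical-choice} and Theorem~\ref{thm:smoothing}, using the special structure of Cayley graphs: counting measure, $\kappa=0$ and bounded degree $\abs{S}$.

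\emph{Step 1: isoperimetry.} I will invoke the Coulhon--Saloff-Coste isoperimetric inequality for finitely generated groups. For every finite $A\subseteq\Gamma$ it gives
\[
\#\partial A\ \ge\ \frac{\#A}{2\abs{S}\,\rho(2\#A)},\qquad \rho(v):=\min\{r\ge0\mid V_S(r)\ge v\},
\]
where $\partial A$ is the edge boundary. Its proof is the averaging argument: for $\abs{g}_S\le r$, $\#(A\setminus Ag)\le r\,\#\partial A$, and $\sum_{\abs{g}\le r}\#(A\setminus Ag)\ge V_S(r)\#A/2$ once $V_S(r)\ge 2\#A$. Combined with $V_S(r)\ge c r^N$ this gives $\rho(v)\le C v^{1/N}$. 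Hence $\#\partial A\ge c'(\#A)^{(N-1)/N}$, which is the discrete $N$-dimensional isoperimetric inequality. This is the step I expect to be the main obstacle. If it is acceptable, I will cite it as a known theorem (see \cite{Woe00}); otherwise I will reproduce the short averaging proof.

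\emph{Step 2: from isoperimetry to \SP[\nu].} The discrete co-area formula, applied to the level sets $\{\abs{\psi}>t\}$ of $\psi\in C_c(X)$, turns Step~1 into the $\ell^1$-Sobolev inequality
\[
\norm{\psi}{N/(N-1)}\le C\sum_{x\sim y}\abs{\psi(x)-\psi(y)} .
\]
Here one uses that $t\mapsto\#\{\abs\psi>t\}^{(N-1)/N}$ is dominated in the standard way, via the layer-cake decomposition and Minkowski's inequality. Next I apply this inequality to $\abs{\psi}^{\beta}$ with $\beta=2(N-1)/(N-2)$. For the gradient term I use
\[
\abs{a^\beta-b^\beta}\le\beta\,\abs{a-b}\,\bigl(a^{\beta-1}+b^{\beta-1}\bigr)\quad (a,b\ge0),
\]
then Cauchy--Schwarz, together with the bounded degree to control $\sum_x\abs{\psi(x)}^{2(\beta-1)}$ by a multiple of $\norm{\psi}{\nu}^{\nu}$. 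Since $\beta N/(N-1)=2(\beta-1)=\nu$, dividing through yields $\norm{\psi}{\nu}^2\le C_\nu\Q(\psi)$, which is \SP[\nu]. The identity $2/\nu=(N-2)/N=m_c$ is immediate.

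\emph{Step 3: consequences.} For (i), let $0<m<m_c$. Then $\alpha>1$. Because $\mu\equiv1$, one has $\abs{u_0(x)}\le\norm{u_0}{1}$ pointwise, so $\norm{u_0}{\alpha}^{\alpha}\le\norm{u_0}{1}^{\alpha-1}\norm{u_0}{1}$, that is, $\norm{u_0}{\alpha}\le\norm{u_0}{1}$. In particular $u_0\in\ellp1\cap\ellp\alpha$, so Theorem~\ref{thm:flexible-extinction} applies with $q=\alpha$. Remark~\ref{rem:critical-choice} then gives extinction for all
\[
t\ge \frac{\nu C_\nu}{(\nu-2)\alpha c_{m,\alpha}}\norm{u_0}{\alpha}^{1-m},
\]
and this threshold is bounded above by the stated one, since $1-m>0$ and $\norm{u_0}{\alpha}\le\norm{u_0}{1}$. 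For (ii), let $m>m_c=2/\nu$. Then Theorem~\ref{thm:smoothing} applies verbatim, since \SP[\nu] holds by Step~2.
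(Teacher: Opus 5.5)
Your proposal is correct and is essentially the paper's proof. The paper cites the discrete Sobolev inequality of Hua--Li, remarking that it follows from the isoperimetric estimate in Woess, while you derive it by that same standard isoperimetry, co-area and power-substitution argument; your exponent bookkeeping $\beta N/(N-1)=2(\beta-1)=\nu$ and $\beta-\nu/2=1$ is right. Your Step~3 coincides with the paper's argument: the bound $\norm{u_0}{\alpha}\le\norm{u_0}{1}$ for counting measure, the critical choice $q=\alpha$ via Remark~\ref{rem:critical-choice}, and Theorem~\ref{thm:smoothing} for (ii).
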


\begin{proof}
By \cite[Inequality~(1.9)]{HuaLi2021DiscreteSobolev}, the volume growth condition \eqref{eq:volume-growth} implies the following discrete Sobolev inequalities on $G$: for every $1\le p<N$ and every $q\ge Np/(N-p)$, there exists $C_{p,q}>0$ such that
\[
    \norm{f}{q}
    \le
    C_{p,q}
    \left(
    \sum_{x\in X}\sum_{y\sim x}
    \abs{f(y)-f(x)}^p
    \right)^{\!1/p}
    \qquad\mbox{for every } f\in C_c(X).
\]
This follows from a standard argument based on the isoperimetric estimate \cite[Theorem~4.18]{Woe00}; see also \cite[Theorem~3.6]{HuaMugnolo15}, where discrete Sobolev inequalities are derived from isoperimetric inequalities on general uniformly locally finite graphs. We choose $p=2$ and
\[
    q=\nu=\frac{2N}{N-2}=\frac{Np}{N-p},
\]
which satisfies $1\le p<N$ and $q\ge Np/(N-p)$ 
since $N\ge3$. Because $\mu\equiv1$, $\kappa\equiv0$ 
and the edge weights take values in $\{0,1\}$, we have
\[
    \sum_{x\in\Gamma}\sum_{y\sim x}\abs{f(y)-f(x)}^2
    =
    2\,\Q(f)
\]
and hence
\[
    \norm{f}{\nu}^2
    \le
    2\,C_{2,\nu}^2\,\Q(f)
    \qquad\mbox{for every } f\in C_c(\Gamma),
\]
that is, $G$ satisfies \SP[\nu] with $C_\nu=2C_{2,\nu}^2$.

Moreover, since $\mu=1$, we have $\norm{f}{\infty}\le\norm{f}{1}$ and, therefore, for every $q\geq 1$,
\begin{equation}\label{eq:l1-lq-counting}
    \norm{f}{q}^q
    \le
    \norm{f}{\infty}^{q-1}\norm{f}{1}
    \le
    \norm{f}{1}^q,
\end{equation}
that is, $\ellp{1}\subseteq\ellp{q}$ with $\norm{f}{q}\le\norm{f}{1}$; see also \cite[Lemma~7]{HuaLi2021DiscreteSobolev}.

(i): Let $0<m<m_c=2/\nu$. By \eqref{eq:l1-lq-counting}, the assumption $u_0\in\ellp{1}\cap\ellp{\alpha}$ of Theorem~\ref{thm:flexible-extinction} with the critical choice $q=\alpha$ reduces to $u_0\in\ellp{1}$. By Remark~\ref{rem:critical-choice}, the extinction time is at most
\[
    \frac{\nu\, C_\nu}{(\nu-2)\,\alpha\, c_{m,\alpha}}\,
    \norm{u_0}{\alpha}^{1-m}
    \le
    \frac{\nu\, C_\nu}{(\nu-2)\,\alpha\, c_{m,\alpha}}\,
    \norm{u_0}{1}^{1-m}
\]
where we used $\norm{u_0}{\alpha}\le\norm{u_0}{1}$ and $1-m>0$.

(ii): This is Theorem~\ref{thm:smoothing}.
\end{proof}

\begin{remark}
We point out that, in the supercritical regime $q>Np/(N-p)$, the best constants in the above discrete Sobolev inequalities are attained: the existence of extremal functions has recently been established in \cite[Theorem~2]{HuaLi2021DiscreteSobolev} for Cayley graphs satisfying \eqref{eq:volume-growth} and in \cite[Theorem~1]{HuaLi2021DiscreteSobolev} for lattice graphs.
\end{remark}

We now give some concrete examples to illustrate the above.
\begin{example}
\begin{enumerate}[(i)]
    \item For $\Gamma=\Z^N$ with the standard generators and $N\ge3$, one has
    $V_S(r)\asymp r^N$, so the volume-growth order is $N$. Hence,
    \[
        \nu=\frac{2N}{N-2}
        \qquad \text{and} \qquad
        m_c=\frac{N-2}{N}
    \]
    exactly as for the fast diffusion equation on $\R^N$. In this case, the Sobolev inequality employed in the proof of Corollary~\ref{cor:cayley-extinction} is the classical discrete Sobolev inequality on the lattice $\Z^N$, see \cite[Theorem~3.6]{HuaMugnolo15} and \cite[Inequality~(1.3)]{HuaLi2021DiscreteSobolev}.

    \item For the discrete Heisenberg group
    \[
        \mathbb H_\Z
        =
        \left\{
        \begin{pmatrix}
        1&a&c\\
        0&1&b\\
        0&0&1
        \end{pmatrix}
        \mid a,b,c\in\Z
        \right\}
    \]
    endowed with any finite generating set $S$, one has
    \[
        V_S(r)\asymp r^4.
    \]
    Indeed, $\mathbb H_\Z$ is two-step nilpotent, with
    \[
        \mathbb H_\Z/[\mathbb H_\Z,\mathbb H_\Z]\cong\Z^2
        \qquad \text{and} \qquad
        [\mathbb H_\Z,\mathbb H_\Z]\cong\Z.
    \]
    Hence, by the Bass--Guivarc'h formula
    \cite{Bass1972,Guivarch1973}, the polynomial growth degree is
    \[
        N
        =
        1\cdot
        \operatorname{rank}\bigl(
          \mathbb H_\Z/[\mathbb H_\Z,\mathbb H_\Z]
        \bigr)
        +
        2\cdot
        \operatorname{rank}[\mathbb H_\Z,\mathbb H_\Z]
        =
        1\cdot2+2\cdot1
        =
        4.
    \]
    Consequently,
    \[
        \nu=4
        \qquad\text{and}\qquad
        m_c=\frac12.
    \]
\end{enumerate}
\end{example}

\begin{remark}[On the killing term]
We conclude by observing that the conclusions of Corollary~\ref{cor:cayley-extinction} are stable under adding a killing term $\ka\ge0$: since $\Q_\ka(f)\ge\Q_0(f)$, the Sobolev inequality \SP[\nu] remains valid, with the same constant, for the graph $(X,w,\kappa,\mu)$, and Theorems~\ref{thm:flexible-extinction} and~\ref{thm:smoothing} apply verbatim, as no assumption on $\kappa$ is used in this section.
\end{remark}

\section{Generalized mass balance and conservation of mass}\label{sec:conservation_of_mass}
In this final section, we study the global mass balance for the homogeneous
problem. For a recent survey on this topic we refer to~\cite{vazquez2025survey}. In the Euclidean setting, solutions of the Cauchy problem for the \eqref{eq:GPME} on the entire space $\R^n$ with zero forcing term preserve their total mass, see, e.g., \cite[Proposition~9.15]{vazquez2007porous}. The same happens for the homogeneous Neumann problem on smooth bounded domains, see \cite[Theorem~11.2]{vazquez2007porous}, and on compact Riemannian manifolds without boundary, see \cite[Section~11.5]{vazquez2007porous}. On the other hand, when the entire space is replaced by a proper subdomain $\Omega$ with homogeneous Dirichlet boundary conditions, mass is, in general, not conserved but dissipated, as a part of it flows out through the boundary $\partial\Omega$; see \cite[Subsection~3.3.3]{vazquez2007porous}. Finally, on noncompact Riemannian manifolds, conservation of mass for the initial value problem holds provided, for instance, that the Ricci curvature admits a (not necessarily constant) lower bound; see \cite[Proposition~4.4]{bianchi2018laplacian}.
Otherwise, mass can be lost at infinity.

On an infinite graph, there are two distinct mechanisms by which mass can be
lost: absorption by the killing term \(\kappa\), which acts as a sink at
every vertex where it does not vanish, and loss ``through infinity.''
The second
mechanism is ruled out precisely by \emph{stochastic completeness at
infinity}, introduced below, which, for \(\kappa=0\), reduces to the
usual stochastic completeness \SC{} of the graph; see
\cite[Chapter~7]{KLW21}.

Throughout this section, the killing term \(\kappa\ge0\) is arbitrary and
the problem is homogeneous, that is, \(f=0\). Set
\[
    K:=\frac{\kappa}{\mu}.
\]
For \(v\in\ell^1(X,\mu)\), its \emph{total mass} is the absolutely
convergent sum
\[
\sum_{x\in X}v(x)\,\mu(x).
\]
Indeed,
\[
\left|\sum_{x\in X}v(x)\,\mu(x)\right|\le\norm{v}{1},
\]
so the map
\[
v\longmapsto\sum_{x\in X}v(x)\,\mu(x)
\]
is a bounded linear functional on \(\ell^1(X,\mu)\).
For positive \(v\), the total mass coincides with \(\norm{v}{1}\).

We recall the definition of stochastic completeness at infinity.
\begin{definition}[Stochastic completeness at infinity \SCinf{}]
We say that \(G=(X,w,\kappa,\mu)\) satisfies \SCinf{} if, for one, and hence
every, \(\lambda>0\), the only \(h\in\ell^\infty(X)\) satisfying
\[
    (\id+\lambda\Delta) h=0
\]
is \(h=0\).
\end{definition}

For arbitrary \(\kappa\),
it expresses that all heat lost by the minimal heat semigroup is accounted
for by the killing term, with no additional loss at infinity; see
\cite[Chapter~7]{KLW21}, in particular the characterizations in
\cite[Theorem~7.16 and Corollary~7.27]{KLW21}, and \cite{KL12}.

In the presence of killing, ordinary conservation of mass is replaced by
balance laws which account for the mass removed by the killing term. We
write
\[
\ell^{1,+}(X,\mu):=\{u\in \ell^{1}(X,\mu):u\geq0\}.
\]

\begin{definition}[Mass balances and conservation of mass]
\label{def:generalized-mass-balance}
\label{def:mass}
Let \(T>0\), let \(u_0\in\ell^1(X,\mu)\), and let
\(u\colon[0,T]\to\ell^1(X,\mu)\) be a solution of the homogeneous
\eqref{eq:C-D} with initial datum \(u_0\), in any of the senses of
Subsection~\ref{ssec:solutions}.
\begin{enumerate}[(i)]
\item If \(u_0\geq0\) and \(u\geq0\), we say that \(u\) satisfies the
\emph{generalized mass balance} on \([0,T]\) if
\begin{equation}\label{eq:generalized-mass-balance}
    \norm{u(t)}{1}
    +
    \int_0^t\sum_{x\in X}
        \kappa(x)\phi(u(s,x))\dd s
    =
    \norm{u_0}{1}
    \qquad \text{for all }0\le t\le T.
\end{equation}
The terms on the left are initially understood in the extended nonnegative
sense. Thus, the identity also asserts their finiteness.
\item We say that \(u\) satisfies the \emph{signed mass balance} on
\([0,T]\) if
\begin{equation}\label{eq:signed-killing-integrability}
    \int_0^T\sum_{x\in X}
        \kappa(x)\abs{\phi(u(s,x))}\dd s<\infty
\end{equation}
and
\begin{equation}\label{eq:signed-generalized-mass}
    \sum_{x\in X}u(t,x)\,\mu(x)
    +
    \int_0^t\sum_{x\in X}
        \kappa(x)\phi(u(s,x))\dd s
    =
    \sum_{x\in X}u_0(x)\,\mu(x)
    \qquad\text{for every }t\in[0,T].
\end{equation}
\item We say that \(u\) \emph{conserves mass} on \([0,T]\) if
\[
    \sum_{x\in X}u(t,x)\,\mu(x)
    =
    \sum_{x\in X}u_0(x)\,\mu(x)
    \qquad\text{for every }t\in[0,T].
\]
\end{enumerate}
\end{definition}

For nonnegative solutions, the generalized and signed mass balances
coincide, because
\(\sum_{x\in X}u(t,x)\,\mu(x)=\norm{u(t)}{1}\) and
\(\phi(u)\geq0\). When \(\kappa=0\), the signed mass balance is exactly
conservation of mass; for nonnegative solutions it can equivalently be
written as \(\norm{u(t)}{1}=\norm{u_0}{1}\). For signed solutions, however,
conservation concerns the signed total mass and does not in general imply
conservation of the \(\ell^1\)-norm.

We establish these balance laws in two settings: in
Subsection~\ref{ssec:mild-generalized-mass}, for nonnegative 
\(\ell^1\)-mild solutions and, in
Subsection~\ref{ssec:classical-generalized-mass}, for \(\ell^1\)-classical
and bounded pointwise solutions, which are allowed to be signed.

\subsection{Generalized mass balance for mild solutions}
\label{ssec:mild-generalized-mass}

For graphs of infinite measure, we use the following one-sided growth
condition at the origin \LG{}:
\[
\limsup_{r\downarrow0}\frac{\phi(r)}{r}<\infty.
\]
Since \(\phi\) is continuous and monotone increasing, with \(\phi(0)=0\), condition
\LG{} is equivalent to
\begin{equation}\label{eq:LG-on-bounded-intervals}
    C_R:=\sup_{0<r\le R}\frac{\phi(r)}{r}<\infty
    \qquad\text{for every }R>0.
\end{equation}
Only nonnegative arguments occur below. For the signed powers
\(\phi(r)=\phi_m(r)=r|r|^{m-1}\), condition \LG{} holds exactly when \(m\ge1\).

We first record the global no-flux identity needed below. Let \(L\)
be the nonnegative self-adjoint realization of \(\Delta\) associated with
the regular Dirichlet form obtained by closing \(\Q|_{C_c(X)}\) on $\ell^2(X,\mu)$, and set
\[
    R_\beta:=(\beta+L)^{-1}
    \qquad \text{for } \beta>0.
\]
We use the same notation for the consistent sub-Markovian extensions of
\(R_\beta\) to \(\ell^1(X,\mu)\) and \(\ell^\infty(X)\). The following identity is a direct consequence of the
resolvent integrability result and the Green-formula characterization of
stochastic completeness at infinity proven in \cite{GrigoryanMasamune2013} for manifolds
and in \cite{HKLMS17} for graphs, see also
\cite[Lemma~7.22 and Corollary~7.27]{KLW21}.
We include the short argument because our hypotheses do not explicitly
assume summability with respect to the killing term.

\begin{lemma}
\label{lem:no-flux-killing}
Assume that \(G\) satisfies \SCinf. If
\[
    v,\Delta v\in\ell^1(X,\mu)\cap\ell^\infty(X),
\]
then \(Kv\in\ell^1(X,\mu)\) and
\begin{equation}\label{eq:no-flux-killing}
    \sum_{x\in X}\Delta v(x)\,\mu(x)
    =
    \sum_{x\in X}\kappa(x)v(x)
\end{equation}
where both series converge absolutely.
\end{lemma}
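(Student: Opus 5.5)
Set $g:=v+\Delta v$, so that $g\in\ell^1(X,\mu)\cap\ell^\infty(X)$ and $(\id+\Delta)v=g$. The plan is to identify $v$ with the resolvent $R_1 g$ and then use the resolvent identity for the killed semigroup to read off both the integrability of $Kv$ and the flux identity. Absolute convergence of $\sum_x\Delta v(x)\mu(x)$ is immediate from $\Delta v\in\ell^1(X,\mu)$, so the real content is $Kv\in\ell^1(X,\mu)$ together with \eqref{eq:no-flux-killing}.

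\textbf{Step 1: $v=R_1g$.} Since $g\in\ell^\infty(X)$, the sub-Markovian extension $R_1g$ is bounded. By the standard minimal-solution characterization (cf.\ \cite[Chapter~7]{KLW21}), $R_1g$ solves $(\id+\Delta)h=g$ pointwise and lies in $\mathcal F$. Hence $h:=v-R_1g\in\ell^\infty(X)$ satisfies $(\id+\Delta)h=0$, and \SCinf{} with $\lambda=1$ gives $h=0$. Thus $v=R_1 g$. By splitting $g=g^+-g^-$ and using linearity, it suffices to treat $g\ge0$, in which case $v=R_1g\ge0$.

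\textbf{Step 2: the flux identity for $g\ge0$.} Here I would invoke the characterization of \SCinf{} in \cite[Theorem~7.16, Corollary~7.27]{KLW21}: for $0\le g\in\ell^1(X,\mu)$,
\[
\sum_x R_1g(x)\,\mu(x)+\sum_x\kappa(x)R_1g(x)=\sum_x g(x)\,\mu(x).
\]
This is the resolvent form of ``all mass lost is killed''; at the semigroup level it reads $P_t1+\int_0^tP_sK\,ds=1$, and the displayed identity is its Laplace transform, obtained by pairing with $g$ through the symmetry of $R_1$. The displayed identity shows at once that $\sum_x\kappa v<\infty$, i.e.\ $Kv\in\ell^1(X,\mu)$. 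Substituting $g=v+\Delta v$ and using $v,\Delta v\in\ell^1(X,\mu)$, the terms $\sum_x v\mu$ cancel and we are left with $\sum_x\Delta v\,\mu=\sum_x\kappa v$. For a general signed $g$, apply this to $g^\pm$: the functions $R_1g^\pm$ are bounded and in $\ell^1(X,\mu)$, and their Laplacians $g^\pm-R_1g^\pm$ are also in $\ell^1(X,\mu)$. Subtracting the two identities gives the claim, and absolute convergence follows from $\kappa|v|\le\kappa(R_1g^++R_1g^-)$.

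\textbf{Main obstacle.} The delicate point is Step 2 without any a priori summability of $\kappa$. I expect to have to derive the resolvent identity myself by exhaustion. On a finite $\Omega_n$, the Green formula Proposition~\ref{prop:greenf} applied with the test function $\mathds1_{\Omega_n}$ gives an exact balance for the Dirichlet resolvents $R^{(n)}_1g$: the initial mass equals the remaining mass plus the killed mass plus a boundary flux out of $\Omega_n$. Since $R^{(n)}_1g\uparrow R_1g$, monotone convergence handles the mass and killing terms. It then remains to show that the boundary flux tends to zero, and this is exactly where \SCinf{} enters. The flux equals $\langle g,1-\text{(killing part)}-\text{mass}\rangle$, and this expression is controlled by the bounded function $1-R_1(\ldots)$, which must vanish by \SCinf.
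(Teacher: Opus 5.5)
Your argument is correct. Step~1 coincides with the paper's first step (with $\beta=1$): both identify $v=R_\beta g$ by combining the $\ell^1$/$\ell^\infty$-contractivity of the resolvent with \SCinf.

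The routes diverge in Step~2.
\begin{itemize}
\item \emph{The paper} uses $g\in\ell^1(X,\mu)\cap\ell^\infty(X)\subseteq\ell^2(X,\mu)$ to place $v$ in the operator domain of $L$, hence in the finite-energy class. It then quotes two separate facts: \cite[Lemma~7.22]{KLW21} for $\sum_x\kappa|v|<\infty$, and the Green-formula characterization of \SCinf{} in \cite[Corollary~7.27]{KLW21} for the identity itself.
\item \emph{You} instead use the resolvent characterization $R_1\mathbf 1+R_1K=\mathbf 1$ and pair it with $g^\pm\ge0$ through the symmetry of the resolvent kernel. Because every term is nonnegative, Tonelli justifies the pairing with no summability of $\kappa$. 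The bound $\sum_x\kappa R_1g^\pm\le\norm{g^\pm}{1}$ then comes out of the same identity, with no separate killing-integrability lemma.
\end{itemize}

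What each route buys: yours avoids the operator-domain/finite-energy identification altogether. The cost is the sign decomposition, which you handle correctly by observing that $R_1g^\pm$ and $\Delta R_1g^\pm=g^\pm-R_1g^\pm$ again lie in $\ell^1(X,\mu)\cap\ell^\infty(X)$. The paper's route needs no splitting and reads the no-flux identity off directly as a Green formula.

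Your last paragraph (the ``main obstacle'') is unnecessary. If you prefer to prove the resolvent identity rather than cite it, no boundary-flux analysis is needed. On every finite $\Omega$ one has $(\id+\Delta_{D,\Omega})\mathbf 1\ge\mathbf 1+K$. The finite maximum principle and monotone convergence of the Dirichlet resolvents then give $0\le R_1(\mathbf 1+K)\le\mathbf 1$. Since $(\id+\Delta)\mathbf 1=\mathbf 1+K=(\id+\Delta)R_1(\mathbf 1+K)$ pointwise, $h:=\mathbf 1-R_1(\mathbf 1+K)$ is a bounded solution of $(\id+\Delta)h=0$. Hence $h=0$ by \SCinf.
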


\begin{proof}
Fix \(\beta>0\) and set
\[
    g:=(\beta +\Delta) v.
\]
Then,
\[
    g\in\ell^1(X,\mu)\cap\ell^\infty(X)
    \subseteq\ell^2(X,\mu).
\]
Let
\[
    z:=R_\beta g.
\]
The resolvent is contractive on both \(\ell^1\) and \(\ell^\infty\), so
\(z\in\ell^1(X,\mu)\cap\ell^\infty(X)\), and
\[
    (\beta+\Delta)z=g
\]
pointwise. Consequently, \(h:=v-z\) is bounded and satisfies
\[
    (\beta+\Delta)h=0.
\]
By \SCinf, \(h=0\), and hence
\[
    v=R_\beta g.
\]

Since \(g\in\ell^1(X,\mu)\), \cite[Lemma~7.22]{KLW21} gives
\[
    \sum_{x\in X}\kappa(x)|v(x)|<\infty.
\]
Moreover, since \(g\in\ell^2(X,\mu)\), the identity \(v=R_\beta g\)
implies that \(v\) belongs to the operator domain of the Dirichlet
Laplacian and, in particular, to the finite-energy class occurring in
\cite[Corollary~7.27]{KLW21}. That corollary therefore yields
\[
    \sum_{x\in X}\Delta v(x)\,\mu(x)
    =
    \sum_{x\in X}\kappa(x)v(x)
\]
as we assume \SCinf{}.
Both series converge absolutely by the assumptions and the preceding
killing-integrability conclusion.
\end{proof}

\begin{remark}[The m-accretivity restriction]
\label{rem:zero-Dirichlet-realization}
No additional hypothesis on the graph is needed to construct the
realization used below. Indeed, \cite[Theorem~3.11]{bksw}
provides a nested finite exhaustion \((Y_n)\) and a dense set
\(\Omega\subseteq\dom(\Deltaphi)\) such that
\[
    \mathcal A:=\Deltaphi|_{\Omega}
\]
is m-accretive on \(\ell^1(X,\mu)\). Its resolvents
\[
    J_\lambda
    :=
    (\id+\lambda\mathcal A)^{-1}
    \qquad \text{for }\lambda>0,
\]
are order-preserving \(\ell^1\)-contractions, satisfy \(J_\lambda0=0\),
and are the \(\ell^1\)-limits, along \((Y_n)\), of the corresponding
finite resolvent solutions.

The operator \(\mathcal A\) is an exhaustion-selected restriction of the
maximal \(\ell^1\) realization \(\Deltaphi\). Without additional hypotheses,
it need not coincide with \(\Deltaphi\). For nonnegative data, the
resolvent limit is independent of the exhaustion, see
Step~1 in the proof of \cite[Theorem~3.11]{bksw}.

For \(p=1\), we use this choice of the operator in
Theorem~\ref{thm:existence-mild}. Let \((S(t))_{t\ge0}\) be the
order-preserving contraction semigroup generated by \(-\mathcal A\).
By the homogeneous semigroup representation
\cite[Corollary~4.2]{barbu2010nonlinear}, the corresponding
\(\ell^1\)-mild solution with initial datum \(u_0\) is
\[
    u(t)=S(t)u_0.
\]
See also Lemma~\ref{lem:translation} for the resulting time-translation
property.
\end{remark}

The following identity~\eqref{eq:mild-generalized-mass} is the graph counterpart of the classical mass-evolution law for nonlinear diffusion equations with absorption and, in the linear case, of the generalized conservativeness of Schrödinger heat semigroups. To the best of our knowledge, it is new in this full graph-theoretic generality.

\begin{theorem}
\label{thm:mild-generalized-mass}
Assume that \(G\) satisfies \SCinf{} and that
\begin{equation}\label{eq:finite-or-LG}
    \mu(X)<\infty
    \qquad\text{or}\qquad
    \phi\text{ satisfies }\LG{}.
\end{equation}
Then, for every, possibly unbounded,
\(u_0\in\ell^{1,+}(X,\mu)\), the  mild solution
\[
    u(t)=S(t)u_0
\]
satisfies
\begin{equation}\label{eq:mild-generalized-mass}
    \norm{u(t)}{1}
    +
    \int_0^t\sum_{x\in X}
        \kappa(x)\phi(u(s,x))\dd s
    =
    \norm{u_0}{1} \qquad
    \text{for all } t\ge0.
\end{equation}
In particular, \(t\mapsto\norm{u(t)}{1}\) is absolutely continuous on
every compact interval and
\begin{equation}\label{eq:differential-mass-balance}
    \partial_t \norm{u(t)}{1}
    =
    -\sum_{x\in X}\kappa(x)\phi(u(t,x))
\end{equation}
for almost every \(t>0\).
\end{theorem}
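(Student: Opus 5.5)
We prove the balance first for a single resolvent step, then for the Euler scheme, and finally pass to the limit. The basic building block is a \emph{resolvent mass identity}: for \(\lambda>0\) and \(0\le g\in\ell^1(X,\mu)\), the element \(v:=J_\lambda g\ge0\) satisfies
\[
    \norm{v}{1}+\lambda\sum_{x\in X}\kappa(x)\phi(v(x))=\norm{g}{1}.
\]
Since \(v\in\dom(\mathcal A)\subseteq\dom(\Deltaphi)\), we have \(\Delta\Phi v=(g-v)/\lambda\in\ell^1(X,\mu)\). To apply Lemma~\ref{lem:no-flux-killing} with \(\Phi v\) in place of \(v\), we need \(\Phi v\) and \(\Delta\Phi v\) in \(\ell^1\cap\ell^\infty\).

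For bounded \(g\), the comparison/order preservation of \(J_\lambda\) (finite resolvents with constant supersolutions, using \(\phi\ge0\) on \([0,\infty)\)) gives \(0\le v\le\norm{g}{\infty}=:R\). Then \(\Phi v\) is bounded. Moreover \(\Phi v\in\ell^1\): if \(\mu(X)<\infty\) this follows from boundedness, while under \LG{} the bound \eqref{eq:LG-on-bounded-intervals} gives \(\phi(v)\le C_Rv\). Also \(\Delta\Phi v=(g-v)/\lambda\) is bounded. Lemma~\ref{lem:no-flux-killing} then yields
\[
    \sum_x\Delta\Phi v\,\mu=\sum_x\kappa\phi(v),
\]
and summing the resolvent equation \(v+\lambda\Delta\Phi v=g\) against \(\mu\) gives the identity. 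For general \(0\le g\in\ell^1(X,\mu)\), we truncate: \(g_k:=\min\{g,k\}\uparrow g\) in \(\ell^1\). By the \(\ell^1\)-contraction and order preservation, \(J_\lambda g_k\uparrow J_\lambda g\) in \(\ell^1\). The killing terms converge by monotone convergence, since \(\phi\) is increasing and continuous. So the identity persists, with finiteness of the killing sum as a by-product.

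Next, along the resolvent chain \eqref{eq:resolvent-chain} with \(v_k=J_{\lambda_k}v_{k-1}\ge0\), we telescope:
\[
    \norm{v_l}{1}+\sum_{k\le l}\lambda_k\sum_x\kappa\phi(v_k)=\norm{u_0}{1}.
\]
In terms of the \(\eps\)-approximate solution this reads
\[
    \norm{u_\eps(t_l)}{1}+\int_0^{t_l}\sum_x\kappa\phi(u_\eps(s))\,ds=\norm{u_0}{1}.
\]

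The main obstacle is passing to the limit \(\eps\downarrow0\) in the killing integral, since \(\kappa\) is not assumed summable. By \eqref{eq:chain-convergence}, \(u_\eps(s)\to u(s)\) in \(\ell^1\), and hence pointwise. Fatou's lemma in \((s,x)\) then gives the inequality ``\(\le\)'' in \eqref{eq:mild-generalized-mass}.

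For the reverse inequality I would try the following. Let \(F_\eps(t):=\int_0^t\sum_x\kappa\phi(u_\eps)\), which equals \(\norm{u_0}{1}-\norm{u_\eps(t)}{1}\) up to an error from the partial last step, and this converges to \(\norm{u_0}{1}-\norm{u(t)}{1}\). Thus \(F_\eps(t)\) converges for each \(t\). To identify the limit, it suffices to show uniform integrability of \(\kappa\phi(u_\eps)\) on \([0,t]\times X\) with respect to \(ds\otimes\kappa\). The approach is to restrict to finite sets: on a finite set \(F\subset X\), dominated convergence applies because the \(u_\eps\) are bounded pointwise (\(u_\eps(s,x)\mu(x)\le\norm{u_0}{1}\)). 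It then remains to control the tail \(\int_0^t\sum_{x\notin F}\kappa\phi(u_\eps)\) uniformly in \(\eps\).

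For this tail I would use the contraction with a splitting \(u_0=u_0\mathds 1_{F'}+u_0\mathds 1_{X\setminus F'}\). By order preservation and the additivity-type bound \(J(a+b)\le\ldots\) this is delicate, so a cleaner route is needed. A better alternative is a symmetric argument applied to \(\norm{u_0}{1}-\norm{u(t)}{1}\): apply the same resolvent identity to the difference of two solutions? The most robust option seems to be the following. The map \(t\mapsto\norm{u(t)}{1}\) is nonincreasing, and applying the ``\(\le\)'' inequality together with a matching lower bound obtained from a finite-set truncation of \(\kappa\) (replace \(\kappa\) by \(\kappa\mathds 1_F\) and compare) gives equality after letting \(F\uparrow X\). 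The absolute continuity and \eqref{eq:differential-mass-balance} then follow immediately from the integral identity, since the integrand is locally integrable in \(s\).
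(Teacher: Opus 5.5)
Your resolvent mass identity is correct. For bounded $g$, the hypotheses of Lemma~\ref{lem:no-flux-killing} hold for $\Phi(J_\lambda g)$ thanks to \FM{} or \LG{}, and the truncation $g\wedge k$ extends the identity to all $0\le g\in\ellone$. Telescoping then gives the exact balance for every Euler approximation. The gap is the limit $\eps\downarrow0$.

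Fatou only yields $\norm{u(t)}{1}+\int_0^t\sum_x\kappa(x)\phi(u(s,x))\,ds\le\norm{u_0}{1}$. The reverse inequality says that the killing flux $\kappa\phi(u_\eps)$ does not escape to infinity as $\eps\downarrow0$. That is the whole remaining difficulty, and you give no argument for it. The uniform tail bound on $\int_0^t\sum_{x\notin F}\kappa\phi(u_\eps)\,ds$ is only announced, and since $\kappa$ is not summable there is no crude bound.

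The fallback of replacing $\kappa$ by $\kappa\mathds 1_F$ fails for two reasons:
\begin{itemize}
\item \SCinf{} is not inherited in general when the killing term is decreased. If $\kappa(x)\ge\sum_y w(x,y)$ for all $x$, then \SCinf{} holds on any graph, while for $\kappa=0$ it reduces to \SC{}, which can fail. So the balance for the truncated problem is not available.
\item Even granting that balance, comparison gives $\tilde u\ge u$ for the truncated solution $\tilde u$. Its balance therefore only bounds $\norm{u(t)}{1}+\int_0^t\sum_{x\in F}\kappa\phi(u)\,ds$ from above by $\norm{u_0}{1}$, which is again the Fatou direction.
\end{itemize}

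The missing idea is to invoke the no-flux lemma after the limit, on a single linear object, rather than on each Euler step. Take bounded data $0\le u_0\le R$, so that $0\le u_{\eps,k}\le R$. Summing the Euler equations gives the linear identity $\Delta H_\eps=u_0-u_{\eps,N}$, where $H_\eps:=\sum_k\lambda_k\Phi u_{\eps,k}=\int_0^t\Phi u_\eps(s)\,ds$ and $0\le H_\eps\le t\phi(R)$.
\begin{itemize}
\item Vertexwise, $H_\eps(x)\to F_t(x):=\int_0^t\phi(u(s,x))\,ds$. Since $w(x,\cdot)$ is summable and the $H_\eps$ are uniformly bounded, dominated convergence gives $\Delta H_\eps(x)\to\Delta F_t(x)$, hence $\Delta F_t=u_0-u(t)$.
\item Apply your \FM{}/\LG{} estimate to $H_\eps$: $\norm{H_\eps}{1}\le t\phi(R)\mu(X)$, respectively $\norm{H_\eps}{1}\le C_Rt\norm{u_0}{1}$ with $C_R=\sup_{0<r\le R}\phi(r)/r$. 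Together with Fatou, this shows $F_t\in\ell^1(X,\mu)\cap\ell^\infty(X)$, and clearly $\Delta F_t\in\ell^1(X,\mu)\cap\ell^\infty(X)$.
\item Lemma~\ref{lem:no-flux-killing} applied to $F_t$, followed by Tonelli, gives the exact identity $\norm{u_0}{1}-\norm{u(t)}{1}=\sum_x\kappa(x)F_t(x)=\int_0^t\sum_x\kappa(x)\phi(u(s,x))\,ds$.
\end{itemize}
On this route no uniform integrability of $\kappa\phi(u_\eps)$ is needed, and the per-step identity becomes superfluous.

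Unbounded data are then handled by truncating the initial datum to $u_0\wedge n$. Order preservation and $\ell^1$-contractivity of $S(t)$, plus monotone convergence in all three terms, finish the argument. Truncating only at the resolvent level, as you do, does not give the uniform bound $u_\eps\le R$ needed for the vertexwise dominated convergence.
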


\begin{proof}
We divide the proof into two steps.

\medskip
\noindent\textbf{Step 1: Bounded nonnegative initial data.}
Let 
\[
    0\le u_0\in\ell^1(X,\mu)\cap\ell^\infty(X)
    \qquad \text{and} \qquad
    R:=\norm{u_0}{\infty}.
\]
The case \(R=0\) is immediate, so assume \(R>0\). Fix \(t>0\), and choose
implicit-Euler approximations on partitions
\[
    0=t_0<t_1<\cdots<t_N=t,
    \qquad
    \lambda_k:=t_k-t_{k-1},
\]
whose meshes $\lambda_k$ tend to zero
\begin{equation}\label{eq:mild-Euler-chain}
    (\id
    +
    \lambda_k\Delta\Phi) u_{\eps,k}
    =
    u_{\eps,k-1},
    \qquad
    u_{\eps,0}=u_0.
\end{equation}
Here, \(u_{\eps,k}=J_{\lambda_k}u_{\eps,k-1}\), so
\eqref{eq:mild-Euler-chain} holds pointwise because
\(\mathcal A\subseteq\Deltaphi\).

The order preservation and \(\ell^1\)-contractivity of \(J_\lambda\),
together with its finite zero-Dirichlet approximation and the finite
maximum principle, give
\begin{equation}\label{eq:Euler-L1-Linfty-bounds}
    0\le u_{\eps,k}\le R
    \qquad \text{and} \qquad
    \norm{u_{\eps,k}}{1}\le\norm{u_0}{1}
    \qquad \text{for } k=1,\ldots,N.
\end{equation}
The \(\ell^\infty\) estimate is first obtained on every finite
zero-Dirichlet problem and then passed to its \(\ell^1\)-limit, thus, it
does not require the constant function \(R\) to belong to
\(\ell^1(X,\mu)\).

Let \(u_\eps\) be the associated piecewise constant curve. The
Crandall--Liggett approximation gives
\begin{equation}\label{eq:Euler-uniform-L1-convergence}
    \sup_{0\le s\le t}
    \norm{u_\eps(s)-u(s)}{1}
    \to 0
\end{equation}
as $\eps \to 0$.
Set
\[
    H_\eps
    :=
    \sum_{k=1}^N\lambda_k\Phi u_{\eps,k}
    =
    \int_0^t\Phi u_\eps(s)\dd s.
\]
Summing \eqref{eq:mild-Euler-chain} over \(k\) and using the linearity of
\(\Delta\) yields the pointwise identity
\begin{equation}\label{eq:Euler-integrated-equation}
    \Delta H_\eps=u_0-u_{\eps,N}.
\end{equation}
Moreover,
\[
    0\le H_\eps\le t\phi(R).
\]

If \(\mu(X)<\infty\), then
\[
    \norm{H_\eps}{1}
    \le
    t\phi(R)\mu(X).
\]
If \(\mu(X)=\infty\), condition \LG{} holds by assumption
\eqref{eq:finite-or-LG}, hence, by
\eqref{eq:LG-on-bounded-intervals} and
\eqref{eq:Euler-L1-Linfty-bounds},
\[
    \norm{H_\eps}{1}
    \le
    C_R\sum_{k=1}^N
        \lambda_k\norm{u_{\eps,k}}{1}
    \le
    C_Rt\norm{u_0}{1}.
\]
Thus, \((H_\eps)\) is uniformly bounded in \(\ell^1(X,\mu)\) in either
case.

For every \(x\in X\), \eqref{eq:Euler-uniform-L1-convergence} and
\[
    |v(x)|\le\frac{\norm{v}{1}}{\mu(x)}
\]
give uniform convergence \(u_\eps(\cdot,x)\to u(\cdot,x)\) on
\([0,t]\) as $\eps \to 0$. Since \(\phi\) is uniformly continuous on \([0,R]\),
\[
    H_\eps(x)\to
    F_t(x):=
    \int_0^t\phi(u(s,x))\dd s
\]
as $\eps \to 0$.
Fatou's lemma gives
\[
    F_t\in\ell^1(X,\mu),
    \qquad
    0\le F_t\le t\phi(R).
\]
The local summability of \(w(x,\cdot)\), the uniform bounds
\(0\le H_\eps,F_t\le t\phi(R)\), and dominated convergence imply
\[
    \Delta H_\eps(x)\to \Delta F_t(x)
    \qquad\text{for every }x\in X.
\]
Passing to the limit in \eqref{eq:Euler-integrated-equation} gives
\begin{equation}\label{eq:mild-integrated-pointwise}
    \Delta F_t=u_0-u(t).
\end{equation}
The estimates above also give
\[
    u(t)\in\ell^1(X,\mu)\cap\ell^\infty(X)
\]
and, therefore,
\[
    F_t,\Delta F_t
    \in
    \ell^1(X,\mu)\cap\ell^\infty(X).
\]
Lemma~\ref{lem:no-flux-killing}, followed by Tonelli's theorem, now yields
\begin{align*}
    \norm{u_0}{1}-\norm{u(t)}{1}
    &= \sum_{x\in X}u_0(x)\,\mu(x)
       -\sum_{x\in X}u(t,x)\,\mu(x) \\
    &= \sum_{x\in X}\Delta F_t(x)\,\mu(x)                              \\
    &=
    \sum_{x\in X}\kappa(x)F_t(x)                                      \\
    &=
    \int_0^t\sum_{x\in X}
        \kappa(x)\phi(u(s,x))\dd s.
\end{align*}
This proves \eqref{eq:mild-generalized-mass} for bounded nonnegative
initial data.

\medskip
\noindent\textbf{Step 2: Arbitrary nonnegative \(\ell^1\) data.}
Let \(u_0\in\ell^{1,+}(X,\mu)\), and set
\[
    u_{0,n}:=u_0\wedge n,
    \qquad
    u_n(t):=S(t)u_{0,n}.
\]
The order preservation and \(\ell^1\)-contractivity of \(S(t)\) give
\[
    0\le u_n(t)\le u_{n+1}(t)\le u(t)
\]
and
\[
    \norm{u_n(t)-u(t)}{1}
    \le
    \norm{u_{0,n}-u_0}{1}
    \to 0.
\]
as $n \to \infty$.
In particular,
\[
    u_n(t,x)\uparrow u(t,x)
    \qquad\text{for every }(t,x)\in[0,\infty)\times X.
\]
Step~1 gives
\[
    \norm{u_n(t)}{1}
    +
    \int_0^t\sum_{x\in X}
        \kappa(x)\phi(u_n(s,x))\dd s
    =
    \norm{u_{0,n}}{1}.
\]
Letting \(n\to\infty\) and applying monotone convergence to all three
terms proves \eqref{eq:mild-generalized-mass} in this case.

Finally, set
\[
    q(t):=
    \sum_{x\in X}\kappa(x)\phi(u(t,x)).
\]
This is measurable as the increasing limit of finite partial sums of
measurable functions. The balance on every compact time interval gives
\(q\in L^1_{\rm loc}([0,\infty))\) and
\[
    \norm{u(t)}{1}
    =
    \norm{u_0}{1}
    -
    \int_0^tq(s)\dd s.
\]
Consequently, \(t\mapsto\norm{u(t)}{1}\) is locally absolutely continuous
and \eqref{eq:differential-mass-balance} holds almost everywhere.
\end{proof}

\begin{remark}[Mass balance for mild solutions does not characterize \SC]
\label{rem:mass-balance-not-characterization}
Consider the birth--death graph
\[
    X=\mathbb N_0,\qquad
    \mu=1,\qquad
    \kappa=0,\qquad
    w(n,n+1)=(n+1)^3,
\]
and let $\phi(s)=s|s|$. Since
\[
    \sum_{n=0}^{\infty}
    \frac{\mu(\{0,\ldots,n\})}{w(n,n+1)}
    =
    \sum_{n=0}^{\infty}\frac{1}{(n+1)^2}
    <\infty,
\]
$G$ is stochastically incomplete by
\cite[Theorem~9.25]{KLW21}.

Nevertheless, every nonnegative  $\ell^1$-mild solution
preserves mass. Indeed, let 
$g\in\ell^{1,+}(X,\mu)$,
$u=J_\lambda g$, and set
\[
    v_n:=u(n)^2,\qquad
    j_n:=(n+1)^3(v_n-v_{n+1})\qquad \text{with }j_{-1}:=0.
\]
The resolvent equation yields
\[
    u(n)+\lambda(j_n-j_{n-1})=g(n)
\]
and, therefore, by a telescoping series argument,
\[
    j_n\to
    L:=\frac{\|g\|_1-\|u\|_1}{\lambda}\ge0
\]
as $n \to \infty$.

If $L>0$, then, since $u(n)\to0$,
\[
    u(n)^2
    =\sum_{k=n}^{\infty}\frac{j_k}{(k+1)^3}
    \gtrsim\frac{1}{(n+1)^2},
\]
which gives $u(n)\gtrsim(n+1)^{-1}$, contradicting $u\in\ell^1(X,\mu)$.
Hence, $L=0$ and $\|J_\lambda g\|_1=\|g\|_1$. Consequently, every
implicit-Euler approximation, and thus its Crandall--Liggett limit,
satisfies
\[
    \|S(t)u_0\|_1=\|u_0\|_1
    \qquad \text{for all }t\ge0.
\]
Moreover, $\phi$ satisfies \LG, since
$\phi(r)/r=r\to0$ as $r\searrow0$. Thus, the generalized mass balance
for, this  fixed  $\phi$, does not imply \SCinf{}.
\end{remark}

Theorem~\ref{thm:mild-generalized-mass} does not apply to the fast
diffusion range on graphs of infinite measure, since \LG{} fails when
\(0<m<1\). Nevertheless, on Cayley graphs of polynomial growth, the
energy estimate prevents any loss of mass at infinity in the
supercritical range.

\begin{corollary}
\label{cor:cayley-fde-mass}
Let \(G=\operatorname{Cay}(\Gamma,S)\) be as in
Corollary~\ref{cor:cayley-extinction} and assume, in addition, that
there exists \(C>0\) such that
\begin{equation}\label{eq:cayley-upper-volume-growth}
    V_S(r)\le C(1+r)^N
    \qquad\mbox{for every }r\ge0.
\end{equation}
Thus, \(V_S(r)\asymp r^N\). Set
\[
    m_c:=\frac{N-2}{N}=\frac{2}{\nu},
    \qquad
    \nu=\frac{2N}{N-2}.
\]
If
\[
    m_c<m<1,
\]
then, for every \(0\le u_0\in\ellone\), the mild solution
\[
    u(t)=S(t)u_0
\]
of the homogeneous problem with \(\phi=\phi_m\) conserves its mass:
\[
    \norm{u(t)}{1}=\norm{u_0}{1}
    \qquad\mbox{for every }t\ge0.
\]
\end{corollary}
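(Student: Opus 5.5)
The plan is to run the classical Euclidean cut-off argument. Test the equation against a cut-off $\psi_R$ with $\psi_R\to1$ and $\|\Delta\psi_R\|$ of order $R^{-2}$, and control the flux term $\sum_x u^m|\Delta\psi_R|$ by Hölder using only $\norm{u}{1}$. The supercritical condition $m>m_c$ then makes the flux vanish as $R\to\infty$. Since $\kappa=0$ and $\mu=1$, we have $\norm{u(t)}{1}\le\norm{u_0}{1}$ by \eqref{eq:l1-decay} and $u\ge0$, so only the reverse inequality is needed.

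\textbf{Step 1: the integrated identity.} As in Step~1 of the proof of Theorem~\ref{thm:mild-generalized-mass}, take Euler chains $u_{\eps,k}=J_{\lambda_k}u_{\eps,k-1}$ on $[0,t]$ and set $H_\eps:=\sum_k\lambda_k u_{\eps,k}^m$. Then $\Delta H_\eps=u_0-u_{\eps,N}$ pointwise. Since $0<m<1$ and $\norm{u_{\eps,k}}{1}\le\norm{u_0}{1}$, each $u_{\eps,k}^m\in\ell^{1/m}$ with $\norm{u_{\eps,k}^m}{1/m}=\norm{u_{\eps,k}}{1}^m$, so $\norm{H_\eps}{1/m}\le t\norm{u_0}{1}^m$. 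For any $\psi\in\ell^{1/(1-m)}$ the double sum $\sum_{x,y}w(x,y)|H_\eps(x)||\psi(y)|$ is finite, because the graph is $|S|$-regular and Hölder applies. Hence Fubini gives
\[
\sum_x\bigl(u_0(x)-u_{\eps,N}(x)\bigr)\psi(x)=\sum_x H_\eps(x)\,\Delta\psi(x),
\]
and therefore
\[
\Bigl|\sum_x (u_0-u_{\eps,N})\psi\Bigr|\le t\,\norm{u_0}{1}^m\,\norm{\Delta\psi}{1/(1-m)}.
\]
Letting $\eps\to0$, using \eqref{eq:Euler-uniform-L1-convergence} and $\psi\in\ell^\infty$, the same bound holds with $u(t)$ in place of $u_{\eps,N}$.

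\textbf{Step 2: the cut-off.} Let $P_s=e^{-s\Delta}$ be the heat semigroup, set $r:=1/(1-m)\in(1,\infty)$, and define
\[
\psi_R:=P_{R^2}\mathds{1}_{B_{R'}},\qquad R':=R^{1+\eta},
\]
with $\eta>0$ small. Since $P_s$ is a symmetric Markov semigroup, it is analytic on $\ell^r$ for $1<r<\infty$ (Stein), so $\norm{\Delta P_s}{r\to r}\le C_r/s$. Combining this with the upper volume bound \eqref{eq:cayley-upper-volume-growth} gives
\[
\norm{\Delta\psi_R}{r}\le C_r R^{-2}\,|B_{R'}|^{1-m}\lesssim R^{-2+(1+\eta)N(1-m)}.
\]
The hypothesis $m>m_c=(N-2)/N$ means $N(1-m)<2$, so for $\eta$ small the exponent is negative and the flux bound of Step~1 tends to $0$ as $R\to\infty$. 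We also have $0\le\psi_R\le1$.

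\textbf{Step 3: $\psi_R\to1$ pointwise.} This is the step I expect to be the main obstacle. Write $\psi_R(x)=\mathbb P_x(Z_{R^2}\in B_{R'})$, where $Z$ is the continuous-time random walk; the graph is stochastically complete because it has bounded degree. We need $1-\psi_R(x)\le\mathbb P_x\bigl(d(x,Z_{R^2})>R'-d(e,x)\bigr)\to0$. I would get this from the Gaussian heat-kernel upper bound on groups of polynomial growth (Hebisch--Saloff-Coste, Varopoulos), which gives $\mathbb E_x\,d(x,Z_s)^2\lesssim s$, and then Chebyshev: the displacement is of order $R$, while $R'-d(e,x)\sim R^{1+\eta}$. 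If a cleaner route is wanted, a displacement bound for the bounded-degree walk via a Lyons-type estimate could replace the Gaussian bound.

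\textbf{Conclusion.} By dominated convergence, since $u_0,u(t)\in\ellone$ and $0\le\psi_R\le1$, letting $R\to\infty$ in the bound of Step~1 yields $\sum_x(u_0-u(t))=0$. Since $u\ge0$ by Theorem~\ref{thm:existence-mild}, this is $\norm{u(t)}{1}=\norm{u_0}{1}$. The lower volume growth \eqref{eq:volume-growth} is used only to place us in the setting of Corollary~\ref{cor:cayley-extinction}; the upper bound \eqref{eq:cayley-upper-volume-growth} and the heat-kernel displacement bound carry the argument.
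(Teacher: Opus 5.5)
Your proof is correct, and it takes a genuinely different route from the paper's.

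The paper argues resolvent by resolvent. It shows $\norm{J_\lambda g}{1}=\norm{g}{1}$ by testing $v+\lambda\Delta v^m=g$ against the Lipschitz word-length cut-off $\zeta_R$, for which $|\zeta_R(x)-\zeta_R(y)|\le 1/R$. Since only one difference falls on the cut-off, the missing decay comes from the energy bound $\lambda q c_{m,q}\Q(v^\beta)\le\norm{g}{q}^q$, i.e.\ the computation of Lemma~\ref{lem:energy-mild} stopped before the Sobolev step. This uses an auxiliary $\theta\in(m_c,m)$ with $q=1+m-\theta$ and $\beta=m-\theta/2$, the splitting $|a^m-b^m|\lesssim(a^{\theta/2}+b^{\theta/2})|a^\beta-b^\beta|$, Cauchy--Schwarz, and H\"older on the annulus. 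The flux is then of order $R^{-1+N(1-\theta)/2}$, and mass conservation passes through the Euler scheme.

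You instead use the time-integrated Euler identity $\Delta H_\eps=u_0-u_{\eps,N}$, which is Step~1 of Theorem~\ref{thm:mild-generalized-mass} with the $\ell^1$ bound that needs \LG{} replaced by an $\ell^{1/m}$ bound. You control $H_\eps$ only through the trivial identity $\norm{u^m}{1/m}=\norm{u}{1}^m$, and put both differences on the test function by duality. This needs no energy estimate and no auxiliary $\theta$, and it gives the classical Euclidean flux bound $R^{-2}|B_{R'}|^{1-m}$ directly at the level of $u(t)$.

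The price is the test function. On a Cayley graph a word-length cut-off only has $|\Delta\zeta_R|\lesssim 1/R$, which would give only $m>1-1/N$. So you need the smoothed cut-off $P_{R^2}\mathds{1}_{B_{R'}}$, Stein's analyticity of symmetric sub-Markovian semigroups on $\ell^r$, stochastic completeness, and a genuinely diffusive displacement bound. The trivial Poisson bound on the number of jumps gives displacement of order $R^2$, which is too weak. The paper's argument, by contrast, is elementary and self-contained.

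For Step~3, note that the Hebisch--Saloff-Coste Gaussian bound is a discrete-time estimate, so it must be Poissonized. More simply, take the Carne--Varopoulos bound $K^n(x,y)\le 2e^{-d(x,y)^2/(2n)}$ for the simple random walk $K$ on $\Gamma$. Combined with $P_s=\sum_{n\ge0}e^{-|S|s}\frac{(|S|s)^n}{n!}K^n$ and the upper bound \eqref{eq:cayley-upper-volume-growth}, it already yields $1-\psi_R(x)\to0$. Thus, like the paper's proof, yours needs only the upper volume growth.
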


\begin{proof}
Choose
\[
    m_c<\theta<m,
    \qquad
    q:=1+m-\theta>1,
    \qquad
    \beta:=\frac{m+q-1}{2}
          =m-\frac{\theta}{2}.
\]
We first prove that every resolvent \(J_\lambda\) preserves the mass of
nonnegative data.

Let \(0\le g\in\ellone\cap\ell^\infty(\Gamma)\) and set
\(v:=J_\lambda g\). Repeating the finite-resolvent argument in the proof
of Lemma~\ref{lem:energy-mild}, but stopping before applying the
Sobolev inequality, and then passing to the exhaustion limit by Fatou's
lemma, gives
\begin{equation}\label{eq:cayley-raw-resolvent-energy}
    \norm{v}{q}^q
    +
    \lambda q c_{m,q}\Q\!\left(v^\beta\right)
    \le
    \norm{g}{q}^q.
\end{equation}
In particular, \(\Q(v^\beta)<\infty\).

For \(R\ge2\), write
\[
    B_R:=\{x\in\Gamma\mid \abs{x}_S\le R\}
\]
and define
\[
    \zeta_R(x):=
    \begin{cases}
        1,
        & x\in B_R,\\[1mm]
        \displaystyle 2-\frac{\abs{x}_S}{R},
        & x\in B_{2R}\setminus B_R,\\[2mm]
        0,
        & x\notin B_{2R}.
    \end{cases}
\]
Then \(\zeta_R\in C_c(\Gamma)\), \(0\le\zeta_R\le1\), and
\[
    \abs{\zeta_R(x)-\zeta_R(y)}
    \le\frac1R
    \qquad\mbox{whenever }x\sim y.
\]
Testing the resolvent equation
\[
    v+\lambda\Delta v^m=g
\]
against \(\zeta_R\) gives
\begin{equation}\label{eq:cayley-resolvent-cutoff}
    \sum_{x\in\Gamma}(g(x)-v(x))\zeta_R(x)
    =
    \lambda\Q(v^m,\zeta_R).
\end{equation}

For \(a,b\ge0\), the choice of \(\beta\) gives the elementary estimate
\[
    \abs{a^m-b^m}
    \le
    C_{m,\theta}
    \bigl(a^{\theta/2}+b^{\theta/2}\bigr)
    \abs{a^\beta-b^\beta}.
\]
Therefore, by the Cauchy--Schwarz inequality and the fact that the
Cayley graph is \(\abs{S}\)-regular,
\[
    \abs{\Q(v^m,\zeta_R)}
    \le
    \frac{C_{m,\theta,S}}{R}
    \Q(v^\beta)^{1/2}
    \left(
        \sum_{x\in A_R}v(x)^\theta
    \right)^{1/2},
\]
where
\[
    A_R:=B_{2R+1}\setminus B_{R-1}.
\]
Since \(0<\theta<1\), Hölder's inequality, the
\(\ell^1\)-contractivity of \(J_\lambda\), and
\eqref{eq:cayley-upper-volume-growth} yield
\[
\begin{aligned}
    \sum_{x\in A_R}v(x)^\theta
    &\le
    \abs{A_R}^{1-\theta}
    \left(\sum_{x\in A_R}v(x)\right)^\theta\\
    &\le
    C(1+R)^{N(1-\theta)}
    \norm{g}{1}^{\theta}.
\end{aligned}
\]
Consequently,
\[
    \abs{\Q(v^m,\zeta_R)}
    \le
    C\Q(v^\beta)^{1/2}\norm{g}{1}^{\theta/2}
    (1+R)^{-1+\frac{N}{2}(1-\theta)}.
\]
Since
\[
    \theta>\frac{N-2}{N},
\]
the exponent on the right-hand side is negative. Hence,
\[
    \Q(v^m,\zeta_R)\longrightarrow0
    \qquad\mbox{as }R\to\infty.
\]
Letting \(R\to\infty\) in \eqref{eq:cayley-resolvent-cutoff} and using
dominated convergence gives
\[
    \norm{J_\lambda g}{1}=\norm{g}{1}.
\]

For arbitrary \(0\le g\in\ellone\), take \(g_n:=g\wedge n\).
The \(\ell^1\)-contractivity of \(J_\lambda\) gives
\[
    J_\lambda g_n\longrightarrow J_\lambda g
    \qquad\mbox{in }\ellone,
\]
and hence the preceding identity passes to the limit:
\[
    \norm{J_\lambda g}{1}=\norm{g}{1}.
\]
Thus, every step of the implicit Euler scheme preserves mass.
Finally, the Crandall--Liggett approximation converges to \(S(t)u_0\)
in \(\ellone\), and therefore
\[
    \norm{S(t)u_0}{1}=\norm{u_0}{1}
    \qquad\mbox{for every }t\ge0.
\]
This completes the proof.
\end{proof}

\subsection{Mass balances for classical and bounded pointwise solutions}
\label{ssec:classical-generalized-mass}

We now prove the generalized mass balance for nonnegative classical
solutions and the signed mass balance for possibly signed classical and
bounded pointwise solutions of the \eqref{eq:C-D}, allowing an arbitrary
killing term. The strategy is to first establish an abstract criterion,
Proposition~\ref{theorem:cons_mass}, which derives the balances directly
from the no-flux identity of Lemma~\ref{lem:no-flux-killing}, and then to
verify its hypotheses in two settings: on the one hand, for
$\ell^1(X,\mu)$-classical solutions on graphs with uniformly positive
measure and, on the other hand, for the bounded pointwise solutions
constructed in Section~\ref{sec:bounded-pointwise} on graphs with bounded
degree and finite measure, in which case stochastic completeness at infinity
is automatic. The role of the assumptions is discussed in
Remark~\ref{rem:necessity} below.

The following criterion generalizes the conservation of mass to graphs with
an arbitrary killing term. It shows that, under stochastic completeness at
infinity, the only mass lost by an $\ell^1$-classical solution is the
one removed by the killing term.

\begin{proposition}[Mass-balance criterion]\label{theorem:cons_mass}
Assume that \(G\) satisfies \SCinf{} and let \(u\) be an
\(\ell^1\)-classical solution of the \eqref{eq:C-D} with \(f=0\).
If
\[
    \Phi u(t),\,\Delta\Phi u(t)
    \in
    \ell^1(X,\mu)\cap\ell^\infty(X)
    \qquad\mbox{for every } t\in(0,T),
\]
then
\[
    \sum_{x\in X}\kappa(x)\abs{\phi(u(t,x))}<\infty
    \qquad\mbox{for every } t\in(0,T)
\]
and
\begin{equation}\label{eq:differential-generalized-mass}
    \partial_t\sum_{x\in X}u(t,x)\,\mu(x)
    =
    -\sum_{x\in X}\kappa(x)\phi(u(t,x))
    \qquad\mbox{for every } t\in(0,T).
\end{equation}
If \(u\ge0\), then
\[
    \norm{u(t)}{1}
    +
    \int_0^t\sum_{x\in X}
        \kappa(x)\phi(u(s,x))\dd s
    =
    \norm{u_0}{1}
    \qquad\text{for every }t\in[0,T].
\]
More generally, if
\[
    \int_0^T\sum_{x\in X}\kappa(x)\abs{\phi(u(s,x))}\dd s<\infty,
\]
then
\[
    \sum_{x\in X}u(t,x)\,\mu(x)
    +
    \int_0^t\sum_{x\in X}\kappa(x)\phi(u(s,x))\dd s
    =
    \sum_{x\in X}u_0(x)\,\mu(x)
    \qquad\text{for every }t\in[0,T].
\]
\end{proposition}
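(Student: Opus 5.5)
The plan is to apply Lemma~\ref{lem:no-flux-killing} pointwise in time to $v:=\Phi u(t)$ and then to integrate in time using the $\ell^1$-differentiability of $u$.

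\textbf{Step 1 (killing integrability and the flux identity).} Fix $t\in(0,T)$. By hypothesis, $v=\Phi u(t)$ and $\Delta v=\Delta\Phi u(t)$ both lie in $\ell^1(X,\mu)\cap\ell^\infty(X)$. Since \SCinf{} holds, Lemma~\ref{lem:no-flux-killing} gives $Kv\in\ell^1(X,\mu)$, that is,
\[
\sum_{x}\kappa(x)\abs{\phi(u(t,x))}<\infty,
\]
together with the identity
\[
\sum_x\Delta\Phi u(t,x)\,\mu(x)=\sum_x\kappa(x)\phi(u(t,x)),
\]
with both series absolutely convergent. This already proves the first assertion.

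\textbf{Step 2 (differential identity).} Let $\ell(v):=\sum_x v(x)\mu(x)$, which is a bounded linear functional on $\ell^1(X,\mu)$. Since $u\in C^1((0,T);\ell^1(X,\mu))$, the map $t\mapsto\ell(u(t))$ is $C^1$ on $(0,T)$, with derivative $\ell(\partial_t u(t))=-\ell(\Delta\Phi u(t))$ by the equation in $\ell^1$. Combining this with Step~1 gives \eqref{eq:differential-generalized-mass} for every $t\in(0,T)$. Note that $\ell$ is continuous on $\ell^1$, so $t\mapsto \ell(u(t))$ is also continuous on $[0,T]$.

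\textbf{Step 3 (integration).} Set $q(t):=\sum_x\kappa(x)\phi(u(t,x))$. By Step~2, $q=-\frac{d}{dt}\ell(u(t))$ on $(0,T)$, so $q$ is continuous there. For $0<s<t<T$ the fundamental theorem of calculus gives
\[
\ell(u(t))-\ell(u(s))=-\int_s^t q.
\]
The one delicate point is letting $s\downarrow0$ and $t\uparrow T$ without knowing a priori that $q$ is integrable near the endpoints.

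In the nonnegative case, $q\ge0$ because $\phi\ge0$ on $[0,\infty)$. We let $s\downarrow0$ using continuity of $\ell(u(\cdot))$ at $0$ and monotone convergence for $\int_s^t q$. This yields $\ell(u(t))+\int_0^tq=\ell(u_0)$; in particular $\int_0^t q<\infty$. Since $\ell(u)=\norm{u}{1}$ for $u\ge0$, this is the stated generalized balance for $t<T$. The endpoint $t=T$ follows by continuity of $\norm{u(\cdot)}{1}$ on $[0,T]$ together with monotone convergence.

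In the signed case, we assume $\int_0^T\sum_x\kappa\abs{\phi(u)}<\infty$. Then $\abs{q}$ is dominated by an $L^1(0,T)$ function, so dominated convergence in the time integral, combined with continuity of $\ell(u(\cdot))$ on $[0,T]$, passes to both endpoints. The time integral of the double sum may be exchanged by Fubini, which is justified by the same integrability assumption.

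I expect no serious obstacle: the analytic content sits in Lemma~\ref{lem:no-flux-killing}, and the only care needed is the endpoint limits and the measurability and integrability of $q$ in the signed case, which the extra hypothesis supplies.
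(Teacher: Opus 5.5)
Your proposal is correct and follows essentially the same route as the paper. You apply Lemma~\ref{lem:no-flux-killing} at each fixed $t\in(0,T)$ to $v=\Phi u(t)$, differentiate the bounded linear mass functional along the $C^1((0,T);\ell^1(X,\mu))$ curve, and then integrate over $[\eps,t]$, passing to the endpoints by continuity of the mass together with monotone convergence (nonnegative case) or dominated convergence (signed case).
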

\begin{proof}
Fix $t\in(0,T)$ and write $v\coloneqq\Phi u(t)$. By hypothesis,
\[
v,\,\Delta v\in\ell^1(X,\mu)\cap\ell^\infty(X),
\]
so Lemma~\ref{lem:no-flux-killing} gives $Kv\in\ell^1(X,\mu)$ as we assume \SCinf{}, that is,
\[
\sum_{x\in X}\kappa(x)\abs{\phi(u(t,x))}<\infty,
\]
together with the no-flux identity
\begin{equation}\label{eq:zero-flux}
\sum_{x\in X}\Delta\Phi u(t,x)\,\mu(x)
=
\sum_{x\in X}\kappa(x)\phi(u(t,x))
\qquad\mbox{for every } t\in(0,T).
\end{equation}

Consider now the mass function $M\colon[0,T]\to\R$ given by
\[
M(t)\coloneqq\sum_{x\in X}u(t,x)\,\mu(x)
\]
which is well defined since $u(t)\in\ell^1(X,\mu)$ for every $t\in[0,T]$. Because the mass functional is a bounded linear functional on $\ell^1(X,\mu)$ and $u\in C\left([0,T];\ell^1(X,\mu)\right)$, the function $M$ is continuous on $[0,T]$. Moreover, for every $t\in(0,T)$ and every $h\neq0$ sufficiently small,
\[
\left|\frac{M(t+h)-M(t)}{h}
-\sum_{x\in X}\partial_tu(t,x)\,\mu(x)\right|
\le
\norm{\frac{u(t+h)-u(t)}{h}-\partial_tu(t)}{1} \to 0
\]
as $h \to 0$
since $u\in C^1\left((0,T);\ell^1(X,\mu)\right)$. Hence, $M$ is differentiable on $(0,T)$ and, by \eqref{eq:zero-flux} and since $u$ is a solution of the \eqref{eq:C-D} with $f=0$,
\[
M'(t)
=
\sum_{x\in X}\partial_tu(t,x)\,\mu(x)
=
-\sum_{x\in X}\Delta\Phi u(t,x)\,\mu(x)
=
-\sum_{x\in X}\kappa(x)\phi(u(t,x))
\qquad\mbox{for every } t\in(0,T).
\]
This proves \eqref{eq:differential-generalized-mass}.

Assume now that $u\ge0$. Since $\phi$ is monotone increasing and $\phi(0)=0$, the function
\[
q(t)\coloneqq\sum_{x\in X}\kappa(x)\phi(u(t,x))
\]
is nonnegative and, being equal to $-M'$, continuous on $(0,T)$. For
$0<\eps<t<T$, the fundamental theorem of calculus gives
\[
M(t)+\int_\eps^tq(s)\dd s=M(\eps).
\]
Letting $\eps\downarrow0$, using the continuity of $M$ at zero on the
right-hand side and monotone convergence on the left-hand side, yields
\[
M(t)+\int_0^tq(s)\dd s=M(0)
\]
where all the terms are finite. Since $u\ge0$, one has
$M(t)=\norm{u(t)}{1}$ and this is exactly the generalized mass balance
\eqref{eq:generalized-mass-balance}.

For signed solutions, assumption \eqref{eq:signed-killing-integrability}
makes $q$ integrable on $(0,T)$; integrating
\eqref{eq:differential-generalized-mass} over $(\eps,t]$ and letting
$\eps\downarrow0$, now by dominated convergence, gives
\eqref{eq:signed-generalized-mass}. In both cases, the identity at $t=T$
follows by letting $t\uparrow T$, using the continuity of $M$ together
with monotone, respectively dominated, convergence.
\end{proof}

In order to verify the hypotheses of
Proposition~\ref{theorem:cons_mass} for the solutions at hand, we will use
the following elementary inclusions between the $\ell^p$-spaces.

\begin{lemma}\label{l:inclusions}
Let $1\le p<q\le\infty$ with the convention $1/\infty\coloneqq0$.
\begin{enumerate}[(i)]
\item[\textup{(i)}] If $G$ satisfies \UM, then $\ell^p(X,\mu)\subseteq \ell^q(X,\mu)$ and, writing $c\coloneqq\inf_{x\in X}\mu(x)>0$,
\[
\norm{f}{q}\le c^{\frac1q-\frac1p}\norm{f}{p}
\qquad\mbox{for every } f\in\ell^p(X,\mu).
\]
\item[\textup{(ii)}] If $G$ satisfies \FM, then $\ell^q(X,\mu)\subseteq \ell^p(X,\mu)$ and
\[
\norm{f}{p}\le \mu(X)^{\frac1p-\frac1q}\norm{f}{q}
\qquad\mbox{for every } f\in\ell^q(X,\mu).
\]
\end{enumerate}
\end{lemma}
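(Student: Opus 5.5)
The plan is to prove both parts pointwise first, then sum against $\mu$; the inequalities are elementary. The cases $q=\infty$ and $q<\infty$ are treated separately. Throughout, $f\in\ell^p(X,\mu)$ is fixed in (i) and $f\in\ell^q(X,\mu)$ in (ii).

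For (i), the basic observation is that \UM{} gives, for every $x\in X$,
\[
|f(x)|^p c\le |f(x)|^p\mu(x)\le \norm{f}{p}^p,
\qquad\text{so}\qquad
\norm{f}{\infty}\le c^{-1/p}\norm{f}{p}.
\]
This is exactly the claim for $q=\infty$. For $p<q<\infty$, split $|f|^q=|f|^{q-p}|f|^p$ and bound the first factor by $\norm{f}{\infty}^{q-p}$. This yields
\[
\norm{f}{q}^q\le \norm{f}{\infty}^{q-p}\norm{f}{p}^p\le c^{-(q-p)/p}\norm{f}{p}^{q}.
\]
Taking $q$-th roots gives the constant $c^{-(q-p)/(pq)}=c^{\frac1q-\frac1p}$. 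This in particular shows $f\in\ell^q(X,\mu)$.

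For (ii), the case $q=\infty$ is immediate from $\sum_x|f(x)|^p\mu(x)\le\norm{f}{\infty}^p\mu(X)$. For $p<q<\infty$, apply Hölder's inequality with respect to the finite measure $\mu$, using the exponents $q/p$ and its conjugate $q/(q-p)$, to the product $|f|^p\cdot 1$:
\[
\sum_{x}|f(x)|^p\mu(x)\le \Bigl(\sum_x|f(x)|^q\mu(x)\Bigr)^{p/q}\mu(X)^{1-p/q}.
\]
Taking $p$-th roots gives $\norm{f}{p}\le\mu(X)^{\frac1p-\frac1q}\norm{f}{q}$. The only point needing care is the bookkeeping of exponents, including the convention $1/\infty=0$, so that the endpoint formulas agree with the general ones. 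I expect no genuine obstacle.
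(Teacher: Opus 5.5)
Your proof is correct and complete. Under \UM{}, the pointwise bound $c\,|f(x)|^p\le\|f\|_p^p$ combined with the splitting $|f|^q=|f|^{q-p}|f|^p$ gives (i). Under \FM{}, H\"older's inequality with exponents $q/p$ and $q/(q-p)$ against the finite measure $\mu$ gives (ii). In both parts the exponents match the stated constants, including the endpoint $q=\infty$.

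The paper states this lemma as elementary and gives no proof. Your argument is the standard one that fills that gap.
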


We are now ready to prove the main result of this subsection.

\begin{theorem}\label{t:conservation}
Let $G=(X,w,\kappa,\mu)$ be a graph with arbitrary killing term
$\kappa\ge0$ and consider the \eqref{eq:C-D} with $f= 0$.
\begin{enumerate}[(1)]
\item[\textup{(i)}] Suppose that $G$ satisfies \SCinf{} and \UM{} and that $\Phi$ satisfies
\C. Then, every nonnegative $\ell^1$-classical solution satisfies
the generalized mass balance
\[
\norm{u(t)}{1}
+
\int_0^t\sum_{x\in X}\kappa(x)\phi(u(s,x))\dd s
=
\norm{u_0}{1}
\qquad\mbox{for every } t\in[0,T].
\]
Every signed $\ell^1$-classical solution satisfies the differential
identity \eqref{eq:differential-generalized-mass} and, whenever
\eqref{eq:signed-killing-integrability} holds, the integrated signed
balance \eqref{eq:signed-generalized-mass}.
\item[\textup{(ii)}] Suppose that $G$ satisfies \BD{} and \FM. Then, every bounded
pointwise solution satisfies the signed
balance
\[
\sum_{x\in X}u(t,x)\,\mu(x)
+
\int_0^t\sum_{x\in X}\kappa(x)\phi(u(s,x))\dd s
=
\sum_{x\in X}u_0(x)\,\mu(x)
\qquad\mbox{for every } t\in[0,T].
\]
\end{enumerate}
In particular, if $\kappa=0$, then, in both settings, every such
solution satisfies the conservation of mass property of
Definition~\ref{def:mass}.
\end{theorem}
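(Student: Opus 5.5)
Both parts reduce to verifying the hypotheses of Proposition~\ref{theorem:cons_mass}, namely that $\Phi u(t)$ and $\Delta\Phi u(t)$ lie in $\ell^1(X,\mu)\cap\ell^\infty(X)$ for every $t\in(0,T)$. Once that holds, the differential identity, the generalized balance for $u\ge0$, and the signed balance under \eqref{eq:signed-killing-integrability} follow directly from the proposition.

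\emph{Part (i).} Let $u$ be an $\ell^1$-classical solution. For each $t\in(0,T)$ we have $u(t)\in\ell^1(X,\mu)$, so \C{} gives $\Phi u(t)\in\ell^1(X,\mu)$. Lemma~\ref{l:inclusions}(i) with $q=\infty$ under \UM{} gives $\ell^1\subseteq\ell^\infty$, hence $\Phi u(t)\in\ell^\infty(X)$. The equation gives $\Delta\Phi u(t)=-\partial_tu(t)\in\ell^1(X,\mu)$, because $u\in C^1((0,T);\ell^1)$ and $u(t)\in\dom(\Deltaphi)$; by Lemma~\ref{l:inclusions}(i) again, $\Delta\Phi u(t)\in\ell^\infty(X)$. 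Proposition~\ref{theorem:cons_mass} then gives all three assertions of (i).

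\emph{Part (ii).} Let $u$ be a bounded pointwise solution. By Lemma~\ref{l:classical-solution} with $f=0$, under \BD{} and \FM{} it is $\ell^p$-classical for every $p$, and in particular $\ell^1$-classical. Set $R=\sup|u|$. Then $\|\Phi u(t)\|_\infty\le\sup_{|s|\le R}|\phi(s)|$, and \FM{} makes bounded functions summable, so $\Phi u(t)\in\ell^1\cap\ell^\infty$. Under \BD, $\Delta$ is bounded on $\ell^\infty$, so $\Delta\Phi u(t)\in\ell^\infty\subseteq\ell^1$ by \FM. Next we check \SCinf. If $h\in\ell^\infty$ with $(\id+\lambda\Delta)h=0$, then $h\in\ell^2$ by \FM. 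Under \BD{} the formal $\Delta$ is a bounded operator on $\ell^2$ that coincides with the nonnegative self-adjoint $L$, so $h=-\lambda Lh$ forces $h=0$. Citing the standard fact that \BD{} implies stochastic completeness at infinity (\cite{KLW21}) is an alternative.

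Finally we need \eqref{eq:signed-killing-integrability} in order to integrate. Under \BD, $\kappa/\mu\le D$, so $\sum_x\kappa|\phi(u)|\le D\sup_{|s|\le R}|\phi(s)|\,\mu(X)$ uniformly in $s$, which gives integrability on $[0,T]$. The proposition then yields the signed balance.

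When $\kappa=0$ the killing integral vanishes and both balances reduce to conservation of signed mass. The most delicate step is verifying \SCinf{} from \BD{} and \FM, specifically the identification of the formal $\Delta$ with $L$ on $\ell^2$; if that is awkward to justify directly, cite \cite{KLW21}.
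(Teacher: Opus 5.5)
Your proposal is correct and follows the paper's proof. Both parts reduce to Proposition~\ref{theorem:cons_mass}, with the same verification that $\Phi u(t),\Delta\Phi u(t)\in\ell^1(X,\mu)\cap\ell^\infty(X)$ (via \C{} and Lemma~\ref{l:inclusions} in (i), via Lemma~\ref{l:classical-solution} and \FM{} in (ii)) and the same killing bound $\kappa\le D\mu$. The only difference is how you obtain \SCinf{} in (ii). The paper uses $\|\Delta\|_{\ell^\infty\to\ell^\infty}\le 2D$ to get $\|h\|_\infty\le 2\lambda D\|h\|_\infty$ for $\lambda<(2D)^{-1}$; this needs no \FM{}, but it relies on the ``one $\lambda$, hence every $\lambda$'' equivalence. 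Your $\ell^2$ argument uses \FM{} to place $h$ in $\ell^2$ and then the nonnegativity of the bounded operator $L=\Delta$, which handles every $\lambda>0$ directly. Both routes are valid.
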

\begin{proof}
In both cases, we verify the hypotheses of Proposition~\ref{theorem:cons_mass}: namely, that $G$ satisfies \SCinf, that $u$ is an $\ell^1$-classical solution and that $\Phi u(t),\Delta\Phi u(t)\in\ell^1(X,\mu)\cap\ell^\infty(X)$ for every $t\in(0,T)$.

(i): Stochastic completeness at infinity holds by hypothesis. Let $u$ be an $\ell^1$-classical solution and fix $t\in(0,T)$. By Definition~\ref{def:classical_solution}, $u(t)\in\ell^1(X,\mu)$ and $\Delta\Phi u(t)\in\ell^1(X,\mu)$, while $\Phi u(t)\in\ell^1(X,\mu)$ by \C. Since $G$ satisfies \UM, Lemma~\ref{l:inclusions} gives $\ell^1(X,\mu)\subseteq\ell^\infty(X)$, and hence
\[
\Phi u(t),\,\Delta\Phi u(t)\in\ell^1(X,\mu)\cap\ell^\infty(X).
\]
All the conclusions now follow from Proposition~\ref{theorem:cons_mass}.

(ii): We first recall that \BD{} implies \SCinf, see
\cite[Corollary~27]{KL10}. Indeed, set
$D\coloneqq\sup_{x\in X}\Deg(x)$, which is finite by \BD. Under \BD, the
operator $\Delta$ is bounded on $\ell^\infty(X)$ with norm at most $2D$,
see \cite[Theorem~2.15]{KLW21}. Hence, if
$0<\lambda<(2D)^{-1}$ and $h\in\ell^\infty(X)$ satisfies
$h+\lambda\Delta h=0$, then
\[
\norm{h}{\infty}
=
\lambda\norm{\Delta h}{\infty}
\le
2\lambda D\norm{h}{\infty}
\]
which forces $h=0.$ Hence, $G$ satisfies \SCinf{}. 

Let now $u$ be a bounded pointwise solution. By \BD, \FM{} and Lemma~\ref{l:classical-solution}, $u$ is an $\ell^p$-classical solution for every $p\in[1,\infty]$ and, in particular, an $\ell^1(X,\mu)$-classical solution. Set
\[
R\coloneqq\sup_{(s,x)\in[0,T]\times X}|u(s,x)|<\infty
\qquad \text{and} \qquad
C\coloneqq\sup_{|r|\le R}|\phi(r)|<\infty.
\]
Then, for every $t\in(0,T)$,
\[
\norm{\Phi u(t)}{\infty}\le C
\qquad\mbox{and}\qquad
\norm{\Delta\Phi u(t)}{\infty}\le 2DC
\]
and, by \FM{} and Lemma~\ref{l:inclusions}, both functions also belong to
$\ell^1(X,\mu)$. Moreover, since $\kappa(x)\le\Deg(x)\mu(x)\le D\mu(x)$
for every $x\in X$,
\[
\int_0^T\sum_{x\in X}\kappa(x)\abs{\phi(u(s,x))}\dd s
\le
TC\sum_{x\in X}\kappa(x)
\le
TCD\mu(X)
<\infty.
\]
Thus, \eqref{eq:signed-killing-integrability} holds, and
Proposition~\ref{theorem:cons_mass} gives the signed mass balance.

Finally, if $\kappa=0$, all the killing integrals vanish: in
case~(ii), and in case~(i) for nonnegative solutions, the balances reduce
to
\[
\sum_{x\in X}u(t,x)\,\mu(x)
=
\sum_{x\in X}u_0(x)\,\mu(x),
\]
while, for signed solutions in case~(i),
condition \eqref{eq:signed-killing-integrability} holds trivially and
\eqref{eq:signed-generalized-mass} gives the same conclusion.
\end{proof}

For the porous medium nonlinearity, condition \C{} in Theorem~\ref{t:conservation}~(i) is automatically satisfied.

\begin{corollary}\label{cor:conservation-pme}
Let $\phi(s)=s^m$ with $m\ge1$ and let $G$ be a graph satisfying \SCinf{} and \UM. Then, every positive 
$\ell^1$-classical solution of \eqref{eq:C-D} with $f=0$ satisfies the generalized mass balance
\[
\norm{u(t)}{1}
+
\int_0^t\sum_{x\in X}\kappa(x)\,u(s,x)^m\dd s
=
\norm{u_0}{1}
\qquad\mbox{for every } t\in[0,T].
\]
In particular, if $\kappa=0$, every such solution satisfies the conservation of mass property.
\end{corollary}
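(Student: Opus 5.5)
The plan is to obtain the statement directly from Theorem~\ref{t:conservation}~(i). Its hypotheses \SCinf{} and \UM{} are assumed here, so the only thing left to check is the $\ell^1$-containment condition \C{} for $\Phi=\phi_m$ with $m\ge1$. Once \C{} holds, part~(i) of that theorem yields the generalized mass balance for every nonnegative $\ell^1$-classical solution. Since $u\ge0$, we have $\phi(u(s,x))=u(s,x)^m$, which is exactly the displayed identity.

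To verify \C, I would use \UM{} through Lemma~\ref{l:inclusions}~(i). With $c:=\inf_X\mu>0$, every $v\in\ell^1(X,\mu)$ satisfies $|v(x)|\le\norm{v}{1}/\mu(x)\le c^{-1}\norm{v}{1}$, so $v$ is bounded. Since $m\ge1$, we then get the pointwise estimate
\[
|\phi_m(v(x))|=|v(x)|^m=|v(x)|^{m-1}|v(x)|\le \bigl(c^{-1}\norm{v}{1}\bigr)^{m-1}|v(x)|.
\]
Multiplying by $\mu(x)$ and summing over $X$ gives
\[
\norm{\Phi v}{1}\le c^{1-m}\norm{v}{1}^{m}<\infty,
\]
that is, $\Phi(\ell^1(X,\mu))\subseteq\ell^1(X,\mu)$. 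So \C{} holds. The role of \LG{} at the origin, namely $m\ge1$, is precisely that the factor $|v|^{m-1}$ stays bounded. For $m<1$ the same argument fails, because small values of $v$ get amplified.

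Finally, for $\kappa=0$ the killing integral vanishes and the identity reduces to $\norm{u(t)}{1}=\norm{u_0}{1}$. This is conservation of mass in the sense of Definition~\ref{def:mass}, as also recorded in the last sentence of Theorem~\ref{t:conservation}. There is no real obstacle: the only substantive step is the elementary \C{} estimate above, and everything else is inherited from Proposition~\ref{theorem:cons_mass} via Theorem~\ref{t:conservation}.
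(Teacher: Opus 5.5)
Your proposal is correct and matches the paper's proof: you reduce to Theorem~\ref{t:conservation}~(i) and verify \C{} via \UM{} and Lemma~\ref{l:inclusions}~(i), using $|v|^m\le\norm{v}{\infty}^{m-1}|v|$ for $m\ge1$. The explicit constant $c^{1-m}\norm{v}{1}^m$ you record is a harmless refinement of the same estimate.
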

\begin{proof}
By Theorem~\ref{t:conservation}~(i), it is enough to show that $\Phi$ satisfies \C, see also \cite[Remark~6]{bianchi2022generalized} or \cite[Proposition~2.4]{bksw}. Let $v\in\ell^1(X,\mu)$. By \UM{} and Lemma~\ref{l:inclusions}, $v\in\ell^\infty(X)$ and, since $m\ge1$,
\[
\sum_{x\in X}|v(x)|^m\,\mu(x)
\le
\norm{v}{\infty}^{m-1}\sum_{x\in X}|v(x)|\,\mu(x)
=
\norm{v}{\infty}^{m-1}\norm{v}{1}
<\infty,
\]
that is, $\Phi v\in\ell^1(X,\mu)$.
\end{proof}

\begin{remark}[On the assumptions]\label{rem:necessity}
\begin{enumerate}[(i)]
\item The ordinary conservation conclusion cannot be expected when
\(\kappa\not =0\), because the killing term acts as an absorption term.
This is already visible on finite graphs: if $X$ is finite and $u$ is a pointwise solution of the \eqref{eq:C-D} with $f=0$, then, summing the equation over $X$ and using the symmetry of $w$, we get
\[
\frac{d}{dt}\sum_{x\in X}u(t,x)\,\mu(x)
=
-\sum_{x\in X}\Delta\Phi u(t,x)\,\mu(x)
=
-\sum_{x\in X}\kappa(x)\phi\left(u(t,x)\right)
\qquad \text{for all } t\in(0,T)
\]
by Lemma~\ref{lem:no-flux-killing} as finite graphs are stochastically complete at
infinity.
For a nonnegative solution, the right-hand side is nonpositive, and it is
strictly negative at time \(t\) whenever
\(\kappa(x)\phi(u(t,x))>0\) for some \(x\in X\). The generalized balance
\eqref{eq:generalized-mass-balance} is the correct replacement: it adds back
exactly the mass dissipated by the killing term and, by
Theorem~\ref{t:conservation}, under stochastic completeness at infinity no
further loss occurs.
\item Stochastic completeness at infinity is the natural assumption in Theorem~\ref{t:conservation}~(i): already in the linear case $\phi=\operatorname{id}$, the validity of the generalized mass balance for all nonnegative initial data in $\ell^1(X,\mu)$ is equivalent to \SCinf; see \cite[Chapter~7]{KLW21}. For a fixed general nonlinearity $\phi$, our argument establishes the implication from \SCinf{} to the balance, the converse should not be expected without additional hypotheses.
\end{enumerate}
\end{remark}

\begin{remark}[Relation to curvature]
For graphs without killing term, stochastic completeness holds under suitable lower Ricci curvature bounds; see \cite{HL17} for Bakry-\'{E}mery curvature and \cite{MW19} for Ollivier curvature. Combined with such criteria, Theorem~\ref{t:conservation}~(i) and Corollary~\ref{cor:conservation-pme} give a counterpart on graphs to the conservation of mass results on Riemannian manifolds with a lower Ricci curvature bound found in \cite{bianchi2018laplacian}.
\end{remark}

\appendix

\section{Auxiliary results}\label{sec:appendix}

The auxiliary results below are used in
Subsections~\ref{ssec:energy} and~\ref{ssec:ext-smooth}. We retain the
signed-power convention
\[
    a^\rho:=a\abs{a}^{\rho-1},
    \qquad a\in\R,\quad \rho>0.
\]

We begin with an elementary inequality for signed powers.

\begin{lemma}\label{lem:signed-power-ineq}
For every $a,b\in\R$ and every $\sigma,\tau>0$,
\begin{equation}\label{eq:power-ineq}
    \bigl(b^\sigma-a^\sigma\bigr)
    \bigl(b^\tau-a^\tau\bigr)
    \ge
    c_{\sigma,\tau}
    \left|b^{\frac{\sigma+\tau}{2}}
          -a^{\frac{\sigma+\tau}{2}}\right|^2
    \qquad \text{where} \qquad 
    c_{\sigma,\tau}
    :=
    \frac{4\sigma\tau}{(\sigma+\tau)^2}.
\end{equation}
Note that $c_{\sigma,\tau}\le1$ by the AM--GM inequality and that, for $\sigma=m$ and $\tau=q-1$, the constant $c_{\sigma,\tau}$ coincides with $c_{m,q}$ in \eqref{eq:constants}, while $\frac{\sigma+\tau}{2}=r_q$.
\end{lemma}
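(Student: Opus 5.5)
We prove the inequality by writing both differences as integrals of derivatives along the segment from $a$ to $b$ and then applying Cauchy--Schwarz. If $a=b$ both sides vanish. By the symmetry $(a,b)\leftrightarrow(b,a)$, which leaves both sides unchanged, we may assume $a<b$. For $\rho>0$ the signed power $s\mapsto s^\rho$ is continuous and strictly increasing on $\R$, and it is $C^1$ away from $0$ with derivative $\rho|s|^{\rho-1}$. This derivative is locally integrable near $0$ because $\rho-1>-1$, so $s\mapsto s^\rho$ is absolutely continuous on $[a,b]$. Hence
\[
b^\rho-a^\rho=\rho\int_a^b|s|^{\rho-1}\dd s
\qquad\text{for every }\rho>0,
\]
even when $0$ lies between $a$ and $b$.

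Set $r:=(\sigma+\tau)/2$ and write
\[
|s|^{r-1}=|s|^{\frac{\sigma-1}{2}}\,|s|^{\frac{\tau-1}{2}}.
\]
The Cauchy--Schwarz inequality on $[a,b]$ then gives
\[
\left(\int_a^b|s|^{r-1}\dd s\right)^2
\le
\int_a^b|s|^{\sigma-1}\dd s\int_a^b|s|^{\tau-1}\dd s.
\]
Multiplying through by $r^2$ and using the integral representation three times yields
\[
\bigl(b^r-a^r\bigr)^2\le\frac{r^2}{\sigma\tau}\bigl(b^\sigma-a^\sigma\bigr)\bigl(b^\tau-a^\tau\bigr).
\]
Since
\[
\frac{\sigma\tau}{r^2}=\frac{4\sigma\tau}{(\sigma+\tau)^2}=c_{\sigma,\tau},
\]
this is exactly \eqref{eq:power-ineq}. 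Both factors on the left of \eqref{eq:power-ineq} have the same sign as $b-a$ by monotonicity, so the product is nonnegative and no sign issues arise.

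The bound $c_{\sigma,\tau}\le1$ is just $4\sigma\tau\le(\sigma+\tau)^2$, i.e.\ AM--GM. Substituting $\sigma=m$ and $\tau=q-1$ shows directly that $c_{\sigma,\tau}=c_{m,q}$ and that $(\sigma+\tau)/2=r_q$.

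The only delicate point is the case where the segment $[a,b]$ crosses $0$ and one of the exponents is below $1$. There the integrand $|s|^{\rho-1}$ is singular at $0$, so the fundamental theorem of calculus must be justified through absolute continuity, which holds because the singularity is integrable. As an alternative, one can apply the formula on $[a,-\eps]$ and $[\eps,b]$ separately and let $\eps\downarrow0$ using continuity of the signed power.
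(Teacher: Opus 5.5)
Your proof is correct, but it is organized differently from the paper's. The paper splits into the cases $ab\ge0$ and $ab<0$. For $0\le a\le b$ it uses the same integral representation and Cauchy--Schwarz argument that you give. For $a<0<b$ it instead writes the left-hand side as $(A^\sigma+B^\sigma)(A^\tau+B^\tau)$ with $A=|a|$, $B=|b|$, applies the \emph{discrete} Cauchy--Schwarz inequality to get the bound with constant $1$, and then invokes $c_{\sigma,\tau}\le1$.

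You observe instead that the signed power has derivative $\rho|s|^{\rho-1}$ on all of $\R\setminus\{0\}$. Hence the single identity $b^\rho-a^\rho=\rho\int_a^b|s|^{\rho-1}\,ds$ holds for every $a<b$. One application of Cauchy--Schwarz with the splitting $|s|^{r-1}=|s|^{(\sigma-1)/2}|s|^{(\tau-1)/2}$ then handles both sign configurations at once. In the sign-changing case this gives only the constant $c_{\sigma,\tau}$ rather than $1$, but that is all the lemma asks.

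Your route gives a case-free argument in which $c_{\sigma,\tau}\le1$ is not needed for the proof itself. The price is justifying the fundamental theorem of calculus across the origin when $\sigma$ or $\tau$ is below $1$. You do this correctly, either via absolute continuity (the integrand is locally integrable) or via the limit over $[a,-\varepsilon]\cup[\varepsilon,b]$. The paper's route never integrates across $0$, though it tacitly uses integrability of $s^{\rho-1}$ at $0$ when $a=0$. It also shows that the sign-changing configuration satisfies the stronger inequality with constant $1$.
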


\begin{proof}
Set
\[
    r:=\frac{\sigma+\tau}{2}.
\]
First assume that $ab\ge0$. By the oddness of signed powers (both sides are invariant under $(a,b)\mapsto(-a,-b)$) and the symmetry in $a,b$, it is
enough to prove the estimate for
$0\le a\le b$. In this case,
\[
    b^\sigma-a^\sigma
    =
    \sigma\int_a^b s^{\sigma-1}\,ds
    \qquad \text{and} \qquad
    b^\tau-a^\tau
    =
    \tau\int_a^b s^{\tau-1}\,ds.
\]
By the Cauchy--Schwarz inequality,
\[
\begin{aligned}
    \bigl(b^\sigma-a^\sigma\bigr)
    \bigl(b^\tau-a^\tau\bigr)
    &=
    \sigma\tau
    \left(\int_a^b s^{\sigma-1}\,ds\right)
    \left(\int_a^b s^{\tau-1}\,ds\right) \\
    &\ge
    \sigma\tau
    \left(\int_a^b s^{r-1}\,ds\right)^2
    =
    \frac{4\sigma\tau}{(\sigma+\tau)^2}
    \left(b^r-a^r\right)^2
\end{aligned}
\]
which is \eqref{eq:power-ineq} in the case $ab\ge0$.

Now assume that $ab<0$; up to replacing $(a,b)$ with $(-a,-b)$, we may suppose $a<0<b$, and we write $A:=\abs{a}$ and $B:=\abs{b}$. Then
\[
    \bigl(b^\sigma-a^\sigma\bigr)
    \bigl(b^\tau-a^\tau\bigr)
    =
    \left(A^\sigma+B^\sigma\right)
    \left(A^\tau+B^\tau\right).
\]
Another application of the Cauchy--Schwarz inequality gives
\[
    \left(A^\sigma+B^\sigma\right)
    \left(A^\tau+B^\tau\right)
    \ge
    \left(A^r+B^r\right)^2
    =
    \left|b^r-a^r\right|^2.
\]
Since $c_{\sigma,\tau}\le 1$, this implies \eqref{eq:power-ineq} in the
case $ab<0$ as well.
\end{proof}

We next provide the scalar comparison principle used to deduce the extinction
and smoothing estimates from the integral energy inequality
\eqref{eq:energy-integral}.

\begin{lemma}\label{lem:ode-comparison}
Let $C>0$ and $\delta>0$ with $\delta\neq1$, and let
$Y\colon[0,\infty)\to[0,\infty)$ be measurable and satisfy
\[
    Y(t)+C\int_s^t Y(\tau)^\delta\,\dd\tau
    \le Y(s)
    \qquad\mbox{for all }0\le s\le t.
\]
Then $Y$ is nonincreasing and the following hold:
\begin{enumerate}
    \item[\textup{(i)}] If $0<\delta<1$, then
    \[
        Y(t)
        \le
        \left[
            Y(0)^{1-\delta}
            -C(1-\delta)t
        \right]_+^{\frac{1}{1-\delta}}
        \qquad\mbox{for every }t\ge0.
    \]
    In particular,
    \[
        Y(t)=0
        \qquad\mbox{for every }\;
        t\ge
        \frac{Y(0)^{1-\delta}}{C(1-\delta)}.
    \]

    \item[\textup{(ii)}] If $\delta>1$ and $Y(0)>0$, then
    \[
        Y(t)
        \le
        \left[
            Y(0)^{1-\delta}
            +C(\delta-1)t
        \right]^{-\frac{1}{\delta-1}}
        \qquad\mbox{for every }t\ge0.
    \]
    Consequently, whether or not $Y(0)=0$,
    \[
        Y(t)
        \le
        \left[
            C(\delta-1)t
        \right]^{-\frac{1}{\delta-1}}
        \qquad\mbox{for every }t>0.
    \]
\end{enumerate}
\end{lemma}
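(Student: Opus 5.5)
The plan is to exploit the integral inequality directly, with no differentiability of $Y$ assumed. Monotonicity comes first: since the integral term is nonnegative, taking any $0\le s\le t$ gives $Y(t)\le Y(s)$, so $Y$ is nonincreasing. In particular $Y$ has one-sided limits everywhere, and $Y(\tau)^\delta\ge Y(t)^\delta$ for $\tau\in[s,t]$. Applying this to the integral gives the discrete inequality
\[
    Y(t)+C(t-s)\,Y(t)^\delta\le Y(s)\qquad\text{for all }0\le s\le t,
\]
which is the only consequence of the hypothesis I will use.

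Next I introduce the comparison function $Z:=Y^{1-\delta}$ and aim for the linear bound $Z(t)\ge Z(s)+C(1-\delta)(t-s)$ in case (i). For case (ii) the analogous target is $Y(t)^{1-\delta}\ge Y(s)^{1-\delta}+C(\delta-1)(t-s)$, read with the conventions $0^{1-\delta}=+\infty$ and $Y$ positive. The elementary scalar tool is the following: if $0<y\le x$ and $y+hCy^\delta\le x$, then
\[
    x^{1-\delta}-y^{1-\delta}
    =(1-\delta)\int_y^x \sigma^{-\delta}\dd\sigma
    \ge(1-\delta)\,x^{-\delta}(x-y)
\]
when $\delta<1$, using that $\sigma^{-\delta}$ is decreasing. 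This yields only $(1-\delta)x^{-\delta}hCy^\delta$, with the wrong endpoint ($x$ instead of $y$). For that reason I will not apply it on a single step. Instead I partition $[s,t]$ into $n$ equal pieces $s=\tau_0<\dots<\tau_n=t$, apply the discrete inequality on each piece, and sum. This gives
\[
    Z(t)-Z(s)\ge (1-\delta)C\sum_j h\,\bigl(Y(\tau_{j+1})/Y(\tau_j)\bigr)^{\delta},
\]
with $h=(t-s)/n$; the case $\delta>1$ is analogous with reversed signs. The main obstacle is showing that the ratios $Y(\tau_{j+1})/Y(\tau_j)$ tend to $1$ in an averaged sense. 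Because $Y$ is monotone, it has at most countably many jumps, and it is continuous at all other points. My first attempt would be to argue that the product of the ratios telescopes to $Y(t)/Y(s)>0$. Then the function $r\mapsto r^\delta$ combined with the AM--GM inequality gives
\[
    \frac1n\sum_j r_j^\delta\ge\Bigl(\prod_j r_j\Bigr)^{\delta/n}=\bigl(Y(t)/Y(s)\bigr)^{\delta/n}\to1.
\]
This yields the desired linear bound as $n\to\infty$, with no continuity needed at all.

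With the linear bound for $Z$ in hand, case (i) follows for $t$ with $Y(t)>0$: taking $s=0$ gives $Y(t)^{1-\delta}\le Y(0)^{1-\delta}-C(1-\delta)t$ (for $\delta<1$, note $Z$ is then nonincreasing along decreasing $Y$, so I must check the sign carefully; the bound has the form $Y(0)^{1-\delta}-Y(t)^{1-\delta}\ge C(1-\delta)t$). This forces the bracket to be positive, and the stated estimate follows. If instead $Y(t)=0$, the estimate is trivial. Since $Y$ can never be positive once the bracket is nonpositive, the extinction time follows. For case (ii) with $Y(0)>0$, either $Y(t)=0$ and the estimate is trivial, or $Y>0$ on $[0,t]$, where the AM--GM argument applies. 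The second statement of (ii) then follows by dropping $Y(0)^{1-\delta}\ge0$, and when $Y(0)=0$ it holds because $Y\equiv0$.
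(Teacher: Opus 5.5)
Your proof is correct, and it takes a genuinely different route from the paper's.

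\textbf{The paper's route.} It works with the absolutely continuous majorant $z(s)=Y(t)+C\int_s^tY(\tau)^\delta\,d\tau$. This function satisfies $Y(t)\le z(s)\le Y(s)$ and $z'=-CY^\delta$ a.e. The paper then integrates $\frac{d}{ds}G(z(s))=-C(Y(s)/z(s))^\delta\le -C$ with $G'(r)=r^{-\delta}$. This is the standard Bihari--Langenhop comparison. It uses the integral form of the hypothesis, measurability, and the chain rule for absolutely continuous functions, and it is a one-shot argument with no limiting procedure.

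\textbf{Your route.} You use the integral only once, to get $Y(t)+C(t-s)Y(t)^\delta\le Y(s)$. A single step then loses the factor $(Y(\tau_{j+1})/Y(\tau_j))^\delta$. You recover the sharp constant by partitioning $[s,t]$ and combining AM--GM with the telescoping product $\prod_j Y(\tau_{j+1})/Y(\tau_j)=Y(t)/Y(s)$. This controls the loss uniformly in $n$, however $Y$ jumps. The argument is entirely elementary and needs neither measurability nor any calculus on $Y$. It therefore proves the lemma under the weaker hypothesis $Y(t)+C(t-s)Y(t)^\delta\le Y(s)$ for all $s\le t$. The remark about countably many jump points is superfluous and can be dropped.

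\textbf{Sign fix for case (i).} For $0<\delta<1$ the function $Z=Y^{1-\delta}$ is nonincreasing. So your initially stated target $Z(t)\ge Z(s)+C(1-\delta)(t-s)$ and your displayed summed inequality have the wrong sign. Your scalar estimate actually gives $Z(\tau_j)-Z(\tau_{j+1})\ge(1-\delta)Ch\,r_j^\delta$. Summing yields $Z(s)-Z(t)\ge(1-\delta)C(t-s)$, which is the form you correctly use at the end.

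For $\delta>1$ the corresponding display should read $Z(t)-Z(s)\ge(\delta-1)Ch\sum_j r_j^\delta$, whereas your version with $(1-\delta)$ is trivially true there. Writing the two cases separately makes the proof clean.
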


\begin{proof}
It is a refinement of the Bihari--Langenhop integral inequality; see
\cite[Section~2.3, Theorems~2.3.1--2.3.2]{Pachpatte1998Ch2}. For $0\le s\le t$, the hypothesis gives
\[
    Y(t)
    \le
    Y(t)+C\int_s^tY(\tau)^\delta\,\dd\tau
    \le
    Y(s),
\]
so $Y$ is nonincreasing. In particular, if $Y(s)=0$ for some
$s\ge0$, then $Y(t)=0$ for every $t\ge s$.

Fix $t>0$. If $Y(t)=0$, there is nothing to prove. We may therefore
assume that $Y(t)>0$ and define
\[
    z(s)
    :=
    Y(t)+C\int_s^tY(\tau)^\delta\,\dd\tau,
    \qquad 0\le s\le t.
\]
Then $z\in AC([0,t])$,
\[
    Y(t)\le z(s)\le Y(s)
    \qquad\mbox{for every }0\le s\le t,
\]
and
\[
    z'(s)=-C\,Y(s)^\delta
    \qquad\mbox{for almost every }s\in(0,t).
\]
In particular, $z(s)>0$ on $[0,t]$.

Suppose first that $\delta>1$. Set
\[
    G(r)
    :=
    \int_{Y(t)}^r\xi^{-\delta}\,\dd\xi
    =
    \frac{r^{1-\delta}-Y(t)^{1-\delta}}{1-\delta}.
\]
For almost every $s\in(0,t)$,
\[
    \frac{\dd}{\dd s}G(z(s))
    =
    z(s)^{-\delta}z'(s)
    =
    -C\left(\frac{Y(s)}{z(s)}\right)^\delta
    \le -C,
\]
because $z(s)\le Y(s)$. Integrating over $[s,t]$ and using
$z(t)=Y(t)$ gives
\[
    G(z(s))\ge C(t-s),
\]
and hence
\[
    Y(t)^{1-\delta}
    \ge
    z(s)^{1-\delta}+C(\delta-1)(t-s).
\]
Since $z(s)\le Y(s)$ and $1-\delta<0$,
\[
    Y(t)^{1-\delta}
    \ge
    Y(s)^{1-\delta}+C(\delta-1)(t-s).
\]
Therefore,
\[
    Y(t)
    \le
    \left[
        Y(s)^{1-\delta}
        +C(\delta-1)(t-s)
    \right]^{-\frac{1}{\delta-1}}.
\]
Taking $s=0$ proves the first estimate in \textup{(ii)}. Dropping the
nonnegative term $Y(0)^{1-\delta}$ gives the second one. If $Y(0)=0$,
then $Y\equiv0$ by monotonicity, so the latter estimate remains valid.

Suppose now that $0<\delta<1$. Define
\[
    G(r)
    :=
    \int_0^r\xi^{-\delta}\,\dd\xi
    =
    \frac{r^{1-\delta}}{1-\delta}.
\]
The same computation gives
\[
    \frac{\dd}{\dd s}G(z(s))\le-C
    \qquad\mbox{for almost every }s\in(0,t).
\]
Integrating over $[s,t]$ and using $z(t)=Y(t)$ yields
\[
    G(z(s))\ge G(Y(t))+C(t-s).
\]
Since $G$ is increasing and $z(s)\le Y(s)$, we obtain
\[
    Y(t)^{1-\delta}
    \le
    Y(s)^{1-\delta}-C(1-\delta)(t-s).
\]
Taking $s=0$ gives
\[
    Y(t)
    \le
    \left[
        Y(0)^{1-\delta}-C(1-\delta)t
    \right]_+^{\frac{1}{1-\delta}}.
\]
Indeed, if the expression inside the brackets is nonpositive, the
assumption $Y(t)>0$ leads to a contradiction, and therefore $Y(t)=0$.
This also proves the asserted extinction estimate.
\end{proof}

\section*{Acknowledgments}
D.~B.~is supported by the Startup Fund of Sun Yat-sen University.
B.~H.~is supported by NSFC, no.~12371056.
A.~G.~S.~is a member of the GNAMPA-INdAM group {``Equazioni Differenziali e Sistemi Dinamici''}.
R.~K.~W.~is supported by the Simons Foundation, in the form
of a Travel Support for Mathematicians gift, and was supported by the PSC-CUNY, in the form
of a Department Chair CUNY Research Foundation (RF) Account. 
\printbibliography
\end{document}